\theoremstyle{plain}
\newtheorem{The}{Theorem}
\newtheorem*{The*}{Theorem}
\newtheorem{Pro}{Proposition}
\newtheorem{Lem}{Lemma}
\newtheorem*{Cor*}{Corollary}
\theoremstyle{definition}
\newtheorem{Def}{Definition}
\newtheorem{Rem}{Remark}
\newtheorem{Exa}{Example}
\newtheorem*{Rem*}{Remark}
\numberwithin{equation}{section}
\numberwithin{The}{section}
\numberwithin{Def}{section}
\numberwithin{Rem}{section}
\numberwithin{Exa}{section}
\numberwithin{Lem}{section}
\numberwithin{Pro}{section}
\numberwithin{Cor}{section}
\DeclareMathOperator{\Tr}{tr}             
\renewcommand{\Im}{\operatorname{Im}}
\renewcommand{\Re}{\operatorname{Re}}
\newcommand{\II}{\operatorname{II}}
\DeclareMathOperator{\del}{\partial}
\newcommand{\bigletter}[1]{\mathlarger{#1}}
\newcommand{\bigPi}{\bigletter{\Pi}}
\newcommand{\R}{\mathbb{R}}
\newcommand{\C}{\mathbb{C}}
\newcommand{\N}{\mathbb{N}}
\newcommand{\Z}{\mathbb{Z}}
\renewcommand{\H}{\mathbb{H}}  
\newcommand{\CP}{\mathbb{CP}}
\begin{document}

\title{Candidates for non-rectangular constrained Willmore minimizers}

\author{Lynn Heller}

\address{ Institut f\"ur Differentialgeometrie\\  Leibniz Universit{\"a}t Hannover\\ Welfengarten
1\\ 30167 Hannover\\ Germany}
 
 \email{lynn.heller@math.uni-hannover.de}
 \author{Cheikh Birahim Ndiaye}

\address{Department of Mathematics of Howard University\\
204 Academic Support Building B
Washington, DC 20059k \\ USA
 }
 \email{cheikh.ndiaye@howard.edu}

 \date{\today}

\thanks{The first  author wants to thank the Ministry of Science, Research and Art Baden-W\"uttemberg and the European social Fund for supporting her research within the Margerete von Wrangell Programm. Further, both authors are indebted to the Baden-W\"urttemberg foundation for supporting the project within the Eliteprogramm for Postdocs. Moreover, the second author thank the Deutsche Forschungsgemeinschaft (DFG) for financial support through the project ''Fourth-order uniformization type theorem for $4$-dimensional Riemannian manifolds'' and the Schweizerischer Nationalfonds (SNF) for financial support through the project PP00P2\_144669}
\noindent

\maketitle

\begin{abstract} 
For every $\;b>1\;$ fixed, we explicitly construct $1$-dimensional families of embedded constrained Willmore tori parametrized by their conformal class $\;(a,b)$\; with  $\; a \sim_b 0^+\;$ deforming the homogenous torus \;$f^b$ of conformal class \;$(0,b).$ The variational vector field  at $f^b$ is hereby given by a non-trivial 
zero direction of a penalized Willmore stability operator which we show to coincide with a double point of the corresponding spectral curve. Further, we characterize for $b \sim 1$, $b \neq 1$ and $a \sim_b 0^+$ the family obtained by opening the ``smallest" double point on the spectral curve which is heuristically the direction with the smallest increase of Willmore energy at $f^b$. Indeed we show in \cite{HelNdi1} that these candidates minimize the Willmore energy in their respective conformal class for $b \sim 1$, $b \neq 1$ and $a \sim_b 0^+.$
 \begin{center}
\vspace{0.5cm}
\includegraphics[width= 0.19\textwidth]{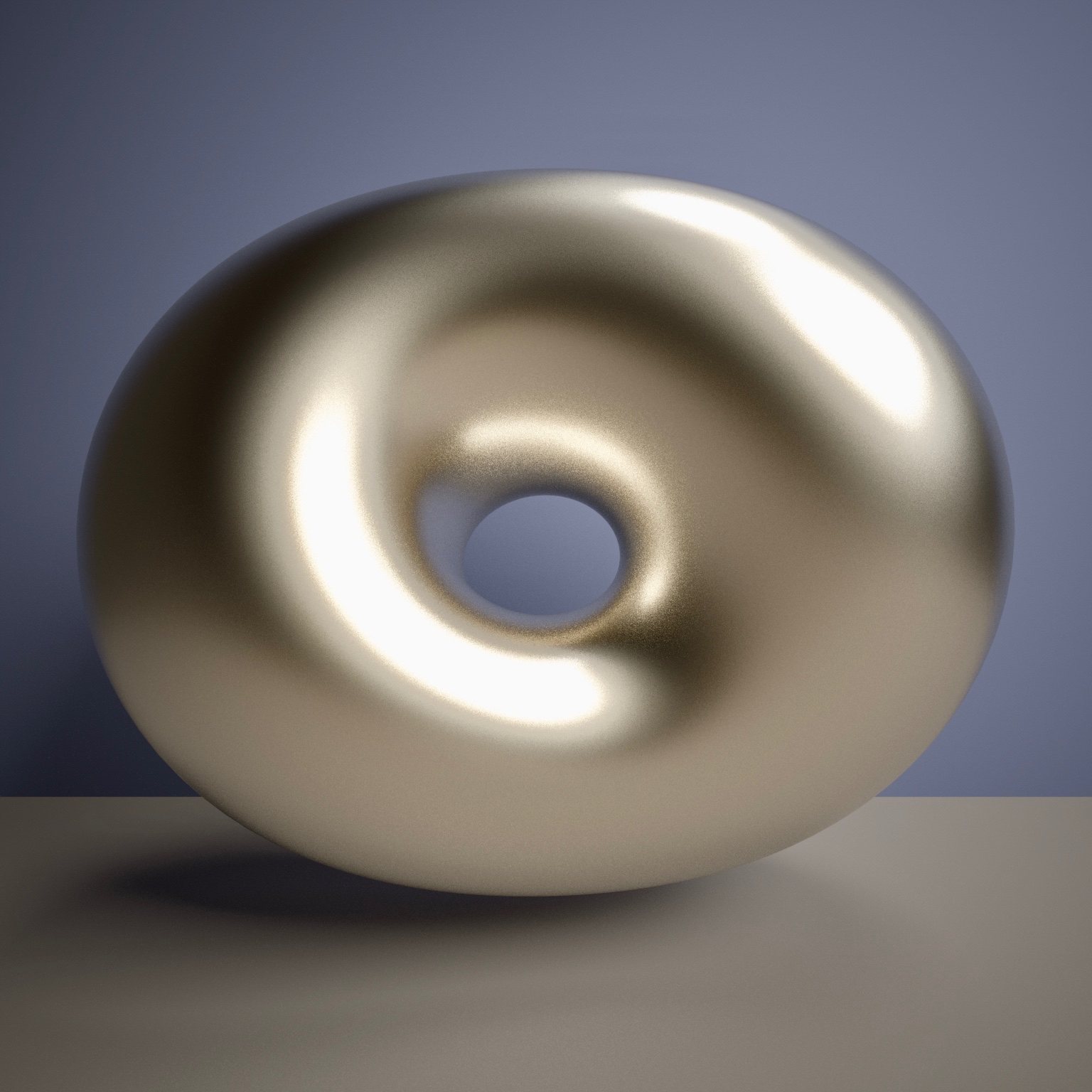}\hspace{0.2cm}
\includegraphics[width= 0.19\textwidth]{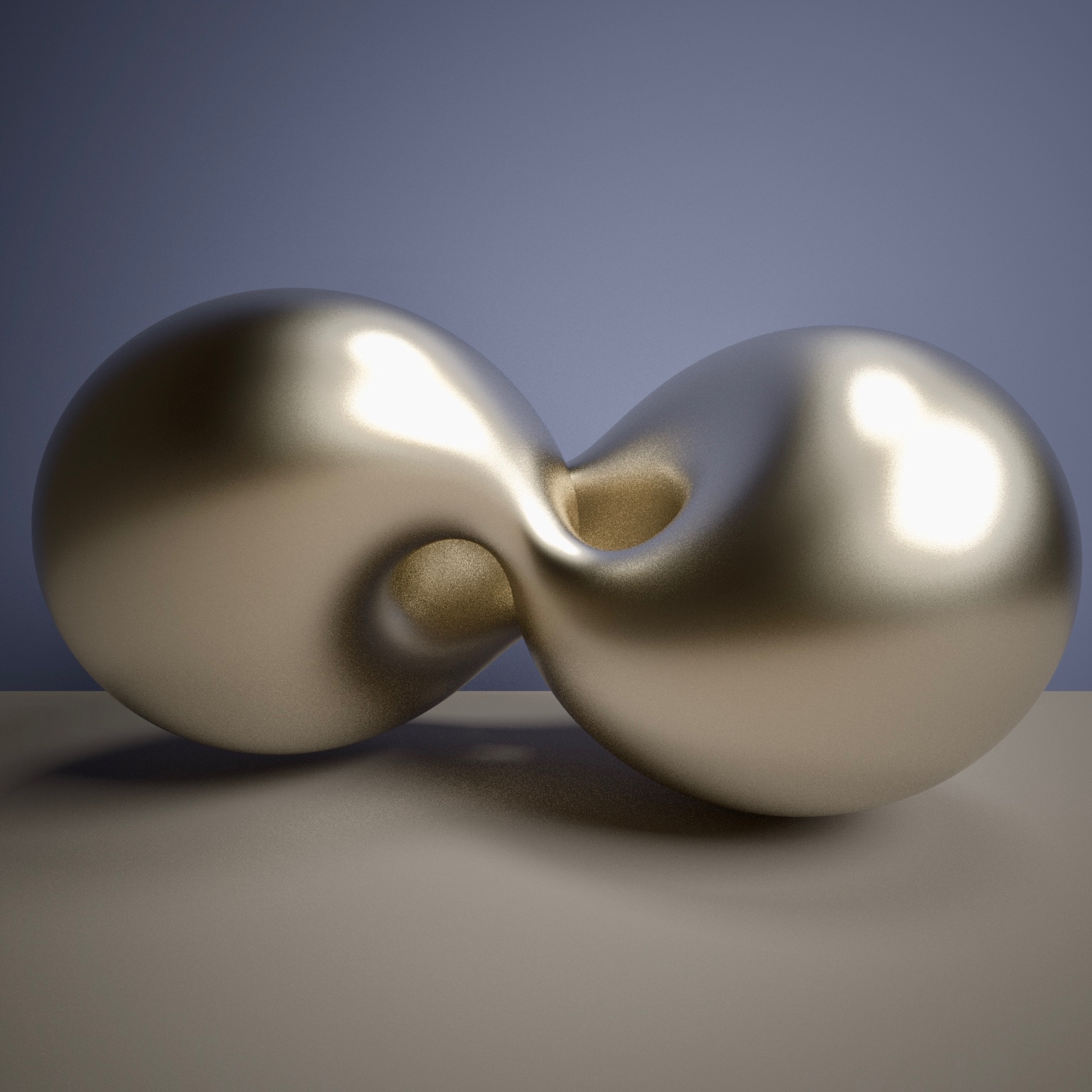}\hspace{0.2cm}
\includegraphics[width= 0.19\textwidth]{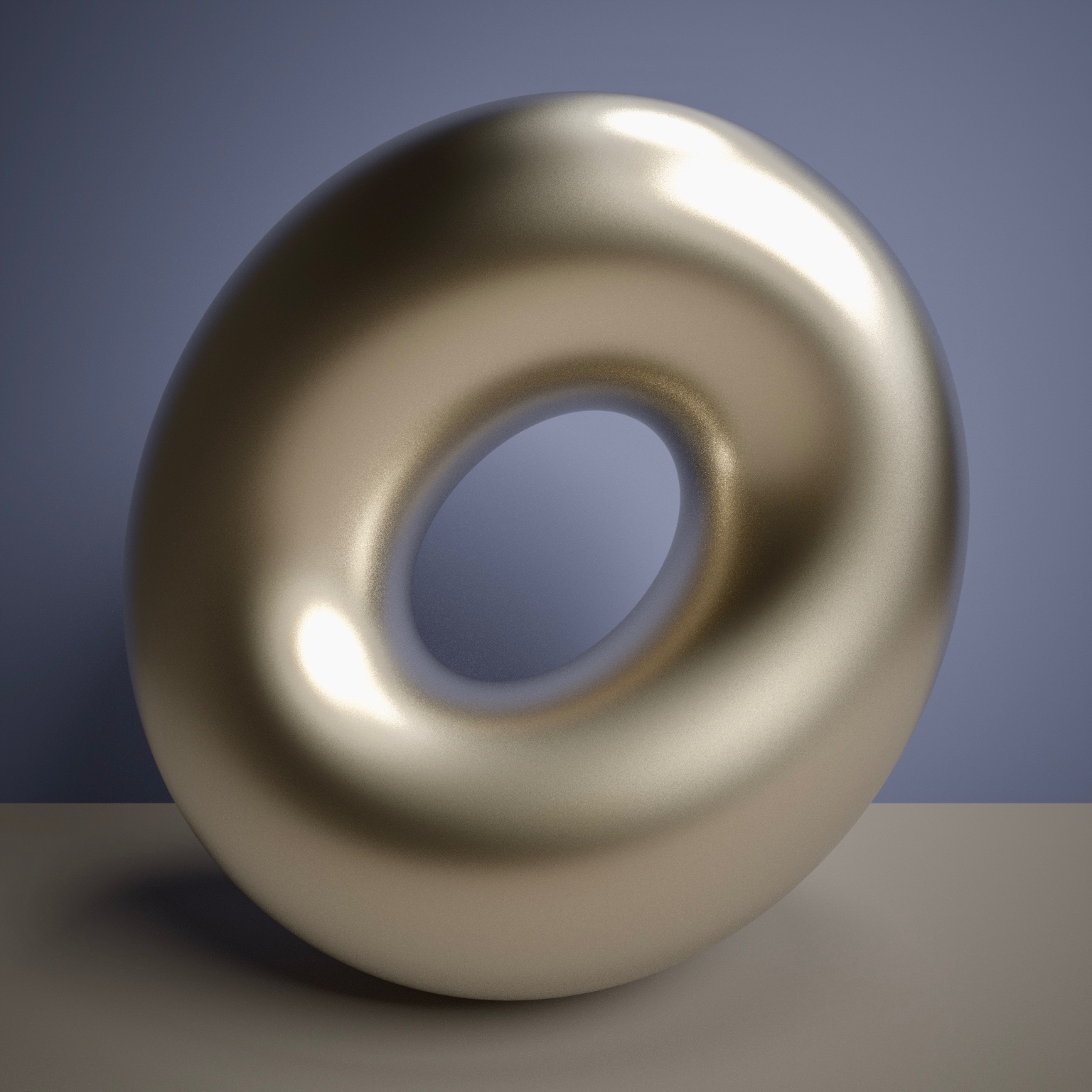}\hspace{0.2cm}
\includegraphics[width= 0.19\textwidth]{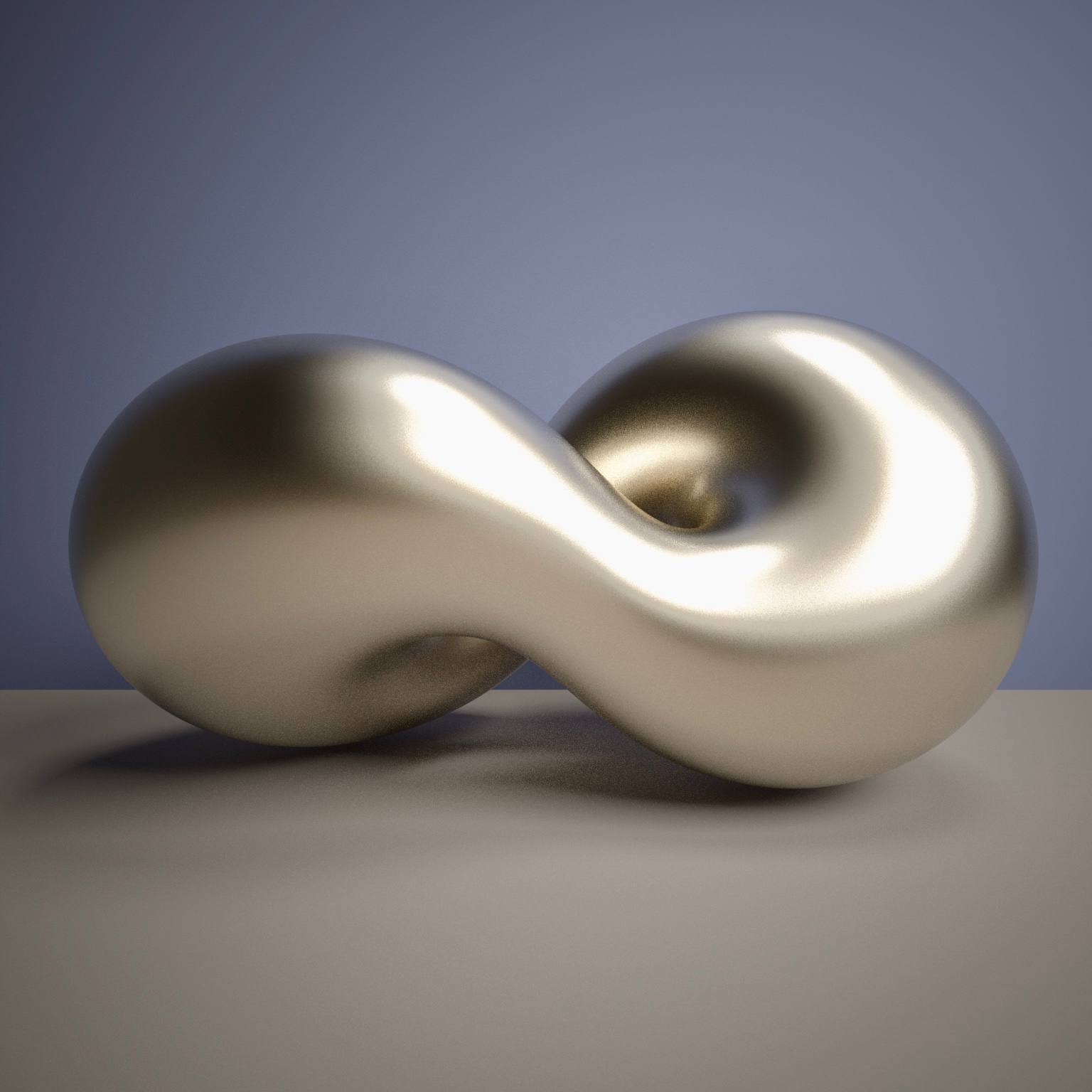}
\vspace{0.6cm}
\end{center}

 \end{abstract}




\section{Introduction and statement of the results}
In the 1960s Willmore \cite{Willmore}  proposed to study the critical values and critical points of the {\em bending energy} 
\[
\mathcal{W}(f)=\int_M H^2dA,
\]

the average value of the squared mean curvature $\;H\;$ of an immersion $\;f\colon M\longrightarrow \R^3\;$ of a closed surface  $\;M.\;$ In this definition we denote by $\;dA\;$ the induced volume form and  $\;H:= \tfrac{1}{2}tr (\text{II})\;$ with II the second fundamental form of the immersion $\;f.\;$
Willmore showed that the absolute minimum of this functional is attained at round spheres with Willmore energy $\;\mathcal{W}=4\pi.\;$ He also conjectured that the minimum over surfaces of genus $1$ is attained at (a suitable stereographic projection of) the Clifford torus in the 3-sphere with $\;\mathcal{W}=2\pi^2.\;$
It soon was noticed that the bending energy $\;\mathcal{W}\;$ (by then also known as the {\em Willmore energy}) is invariant under M\"obius transformations of the target space -- in fact, it is invariant under conformal changes of the metric in the target space, see \cite{Blaschke, Chen}. Thus, it makes no difference for the study of the Willmore functional which constant curvature  target space is chosen. \\

Bryant \cite{Bryant} characterized all Willmore spheres as M\"obius transformations of genus $\;0\;$ minimal surfaces in $\;\R^3\;$ with planar ends. The value of the bending energy on Willmore spheres is thus quantized to be $\; \mathcal{W}=4\pi k,\;$ with $\;k\geq 1\;$ the number of ends. With the exception of  $\;k=2,3,5,7\;$ all values occur. For more general target spaces the variational setup to study this surfaces can be found in \cite{Mondino}.
The first examples of Willmore surfaces not M\"obius equivalent to minimal surfaces were found by Pinkall \cite{Pinkall}. They were constructed via lifting elastic curves $\;\gamma\;$ with geodesic curvature $\;\kappa\;$ on the 2-sphere under the Hopf fibration to Willmore tori in the 3-sphere, where
elastic curves are the critical points for the elastic energy 

$$E(\gamma)= \int_{\gamma} (\kappa^2+1)ds$$

 and $\;s\;$ is the arclength parameter of the curve. Later Ferus and Pedit \cite{FerusPedit} classified all Willmore tori equivariant under a M\"obius $\;S^1$-action on the 3-sphere (for the definition of $\;S^1$-action see Definition \ref{S1action}).\\

The Euler-Lagrange equation for the Willmore functional 

\[
{\Delta} H+ 2H(H^2-K)=0,
\]

 where $\;K\;$ denotes the Gau\ss ian curvature of the surface $\;f\colon M\longrightarrow \R^3\;$ and $\;\Delta\;$ its Laplace-Beltrami operator, is a 4th order elliptic PDE for $\;f\;$ since the mean curvature vector $\;\vec{H}\;$ is the normal part of $\;\Delta f.\;$ Its analytic properties are prototypical for non-linear bi-Laplace equations. Existence of a minimizer for the Willmore functional $\;\mathcal{W}\;$ on the space of smooth immersions from 2-tori was shown by Simon \cite{Simon}. Bauer and Kuwert \cite{BauerKuwert} generalized this result to higher genus surfaces. After a number of partial results, e.g. \cite{LiYau}, \cite{MontielRos}, \cite{Ros}, \cite{Top}, \cite{FLPP},  Marques and Neves  \cite{MarquesNeves}, using Almgren-Pitts min-max theory, gave a proof of the Willmore conjecture in $3$-space in 2012. An alternate strategy was proposed in \cite{Schmidt}\\

A more refined, and also richer, picture emerges when restricting the Willmore functional to the subspace of smooth immersions $\;f\colon M\longrightarrow \R^3\;$ inducing a given conformal structure on $\;M.\;$  Thus, $\;M\;$  now is  a Riemann surface and we study the Willmore energy $\;\mathcal{W}\;$ on the space of smooth conformal immersions $\;f\colon M\longrightarrow \R^3\;$ whose critical points are called {\em (conformally) constrained Willmore surfaces}. The conformal constraint augments the Euler-Lagrange equation by  $\;\omega\in H^0(K^2_M)\;$ paired with the trace-free second fundamental  form $\;\mathring{\text{II}}\;$ of the immersion

\begin{equation}\label{CWEL}
\Delta H+ 2H(H^2-K)=\,<\omega,\mathring{\text{II}}> ,
\end{equation}

where $\;H^0(K^2_M)\;$ denoting the space of holomorphic quadratic differentials. In the Geometric Analytic literature, the space $\;H^0(K_M^2)\;$ is also referred to as $\;S_2^{TT}\big(g_{euc}\big)\;$ the space of symmetric, covariant, transverse and traceless $2$-tensors with respect to the euclidean metric $\;g_{euc},\;$ In the case of tori, the Teichm\"ller space can be identified with the upper halfplane $\H^2$. With $\;\bigPi = (\bigPi^1, \bigPi^2)\;$  denoting the projection map from the space of immersions to $\H^2$,  the right hand side of the Euler Lagrange equation can be written as
$$<\omega,\mathring{\text{II}}> = \alpha \delta \bigPi^1 + \beta\delta \bigPi^1,$$
for real numbers $\alpha$ and $\beta$, which are referred to as the $\bigPi$-Lagrange multipliers of the immersion.

Since there are no holomorphic (quadratic) differentials on a genus zero 
Riemann surface, constrained Willmore spheres are the same as Willmore spheres. For higher genus surfaces this is no longer the case: constant mean curvature surfaces (and their M\"obius transforms) are constrained Willmore, as one can see by choosing $\;\omega:=\mathring{\text{II}}\;$ as the holomorphic Hopf differential in the Euler Lagrange equation \eqref{CWEL}, but not Willmore unless they are minimal in a space form. 
Bohle \cite{Bohle}, using techniques developed in \cite{BohLesPedPin} and \cite{BohPedPin_ana}, showed that all constrained Willmore tori have finite genus spectral curves and are described by linear flows on the Jacobians of those spectral curves\footnote{For the notion of spectral curves and the induced linear flows on the Jacobians see \cite{BohLesPedPin}.}. Thus the complexity of the map $\;f\;$ heavily depends on the genus  its spectral curve $\;\Sigma$ -- the spectral genus -- giving the dimension of the Jacobian of $\;\Sigma\;$ and thus codimension of the linear flow. The simplest examples of constrained Willmore tori, which have spectral genus zero, are the tori of revolution in $\;\R^3\;$ with circular profiles -- the homogenous tori. Those are stereographic images of products of circles of varying radii ratios in the 3-sphere and thus have constant mean curvature as surfaces in the 3-sphere.  Starting at the Clifford torus, which has mean curvature $\;H=0\;$ and a square conformal structure, these homogenous tori in the 3-sphere parametrized by their mean curvature $\;H\;$ ``converge'' to a circle as $\;H\longrightarrow \infty\;$ and thereby sweeping out  all rectangular conformal structures. Less trivial examples of constrained Willmore tori come from the Delaunay tori of various lobe counts (the $\;n$-lobed Delaunay tori) in the 3-sphere whose spectral curves have genus $1$, see Figure~\ref{fig:torus-tree} and \cite{KilianSchmidtSchmitt1} for their definition. \\
\begin{figure}
\includegraphics[width=0.5\textwidth]{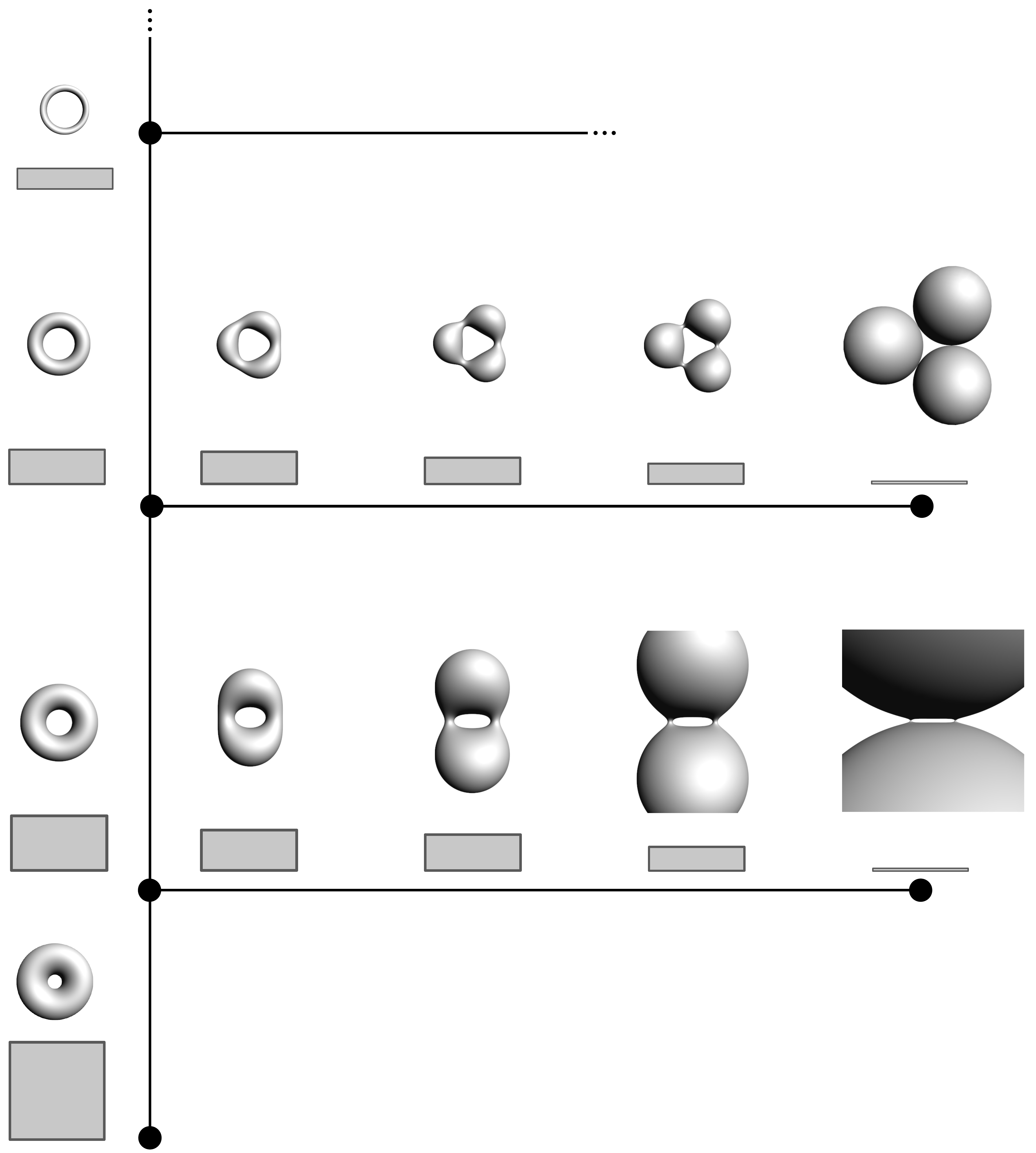}
\caption{
The vertical stalk represents the family of homogenous tori,
starting with the Clifford torus at the bottom.
Along this stalk are bifurcation points from which embedded Delaunay tori continue the homogenous family. The rectangles indicate the conformal types. The family of surfaces starting at the Clifford torus, bifurcating at the first branch point has Willmore energy below $\;8 \pi\;$ and is conjectured to be the minimizer in their respective conformal class.
 Image by Nicholas Schmitt.
}
\label{fig:torus-tree}
\end{figure}

Existence and regularity of a $\;W^{2,2} \cap W^{1, \infty}\;$ minimizer $\;f\colon M\longrightarrow \R^3\;$ for a prescribed Riemann surface structure\footnote{For the notion of $\;W^{2,2} \cap W^{1, \infty}\;$ immersions see \cite{KuwertSchaetzle}, \cite{Riviere} or \cite{KuwertLi}.} (constrained Willmore minimizer) was shown by \cite{KuwertSchaetzle}, \cite{KuwertLi}, \cite{Riviere2} and \cite{Schaetzle} under the assumption that the infimum Willmore energy in the conformal class  is below $\;8\pi.\;$  The latter assumption ensures that minimizers are embedded by the Li and Yau inequality \cite{LiYau}. A broader review of analytic results for Willmore surfaces can be found in the lecture notes \cite{KuwertSchaetzle} and \cite{Riviere3}, see also the references therein. \\

Ndiaye and Sch\"atzle \cite{NdiayeSchaetzle1, NdiayeSchaetzle2} identified the first explicit constrained Willmore minimizers (in every codimension) for rectangular conformal classes in a neighborhood (with size depending on the codimension) of the square class to be the homogenous tori.  These tori of revolution with circular profiles, whose spectral curves have genus 0, eventually have to fail to be minimizing in their conformal class for $\;H >>1,\;$ since their Willmore energy can be made arbitrarily large and every rectangular torus can be conformally embedded into $\;\R^3\;$ (or $\;S^3$) with Willmore energy below $\;8\pi,\;$ see \cite{ KilianSchmidtSchmitt1, NdiayeSchaetzle2}. Calculating the 2nd variation of the Willmore energy $\;\mathcal{W}\;$ along homogenous tori Kuwert and Lorenz \cite{KuwertLorenz} showed that zero eigenvalues only appear at those conformal classes whose rectangles have side length ratio $\;\sqrt{k^2-1}\;$ for an integer $\;k\geq 2,\;$ at which the index of the surface increase. These are exactly the rectangular conformal classes from which the $\;k$-lobed Delaunay tori (of spectral genus 1) bifurcate.  Any of the families starting from the Clifford torus, following homogenous tori to the $\;k$-th  bifurcation point, and continuing with the $\;k$-lobed Delaunay tori sweeping out all rectangular classes (see Figure~\ref{fig:torus-tree}) ``converge'' to a neckless of spheres as conformal structure degenerates. The Willmore energy $\;\mathcal W\;$ of the resulting family\footnote{For simplicity we call this family in the following the $\;k$-lobed Delaunay tori.} is strictly monotone and satisfies $\;2\pi^2\leq \mathcal{W}<4\pi k,\;$ see \cite{KilianSchmidtSchmitt1, KilianSchmidtSchmitt2}.  
Thus for $\;k=2\;$ the existence of $2$-lobed Delaunay tori imply that the infimum Willmore energy in every rectangular conformal class is always below $\;8 \pi\;$ and hence there exist embedded constrained Willmore minimizers for these conformal types by \cite{KuwertSchaetzle}.
It is conjectured that the minimizers for $\;\mathcal{W}\;$ in rectangular conformal classes are given by the $2$-lobed Delaunay tori. For a more detailed discussion of the $2$-lobe-conjecture see \cite{HePe}.  Surfaces of revolution with prescribed boundary values was studied in \cite{Grunau}. \\

In this paper we turn our attention to finding families of constrained Willmore tori deforming homogenous tori parametrized by their conformal class. Moreover, we identify a family of putative constrained Willmore minimizers in non-rectangular conformal classes. These candidates are shown in \cite{HelNdi1} to be actually minimizing when restricting to $a^b \sim_b 0^+$ and $b \sim 1, $ but $b \neq1.$ Our main theorem is the following.

 \begin{The} [Main Theorem]\label{explicitcandidates}
For every $\;b \sim 1,\;$ $b \neq1\;$ fixed, there exist a family  of constrained Willmore tori $\;f_{(a, b)}\;$ parametrized by their conformal class $\;(a,b),\;$ with $a \sim_b 0^+$ satisfying the following properties:
\begin{itemize}
\item $f_{(0,b)} = f^b\;$ is homogenous and parametrized as a $(1,2)$-equivariant surface 
\begin{equation}\label{12fb}
\begin{split}
f^b \colon T^2_b : = \C/\left (i 2\pi \Z \oplus 2\pi\frac{ rs + i 2r^2}{s^2+ 4r^2}\Z\right) \longrightarrow S^3\\
(x,y) \longmapsto \left(r e^{i\left(y- 2\tfrac{r}{s}x\right)}, s e^{i\left(2y + \tfrac{s}{r}x \right)}\right)
\end{split}
\end{equation}
\item $f_{(a,b)}\;$ is non degenerate for $\;a \neq0,\;$ and $\;f_{(a,b)} \longrightarrow f^b\;$ smoothly as $\;a \longrightarrow 0$, 
\item  for every $\;b \sim 1,\;$ $b \neq 1\;$ fixed the normal variation $\;\big(\del_{\sqrt{a}}f_{(a,b)} |_{a=0}\big)^\perp\;$ verifies $$<\partial_{\sqrt{a}} f_{(a,b)}\mathlarger{|}_{a=0}, \vec{n}_{1,2}^{b} > \;=  \sin \left (\tfrac{s}{r}+  4\tfrac{r}{s} x \right),$$ 
where $\vec{n}_{1,2}^b$ is the normal vector of $f^b$ in the $(1,2)$-parametrization.
\item for every $\;b \sim 1,\;$ $b \neq 1\;$ fixed and $\;a\neq 0,\;$ the corresponding $\bigPi$-Lagrange multipliers $\;\alpha_{(a,b)},\;$ and $\;\beta_{(a,b)}\;$ satisfy

$$\alpha_{(a,b)} \nearrow \hat\alpha^b \quad \text{ and } \quad  \beta_{(a,b)}\longrightarrow \beta^b, \quad \text{ as } a \longrightarrow 0$$
for some \;$\hat\alpha^b>0$ \;and \;$\beta^b$ is the $\bigPi^1$-Lagrange multiplier of $f^b.$
\end{itemize}
\end{The}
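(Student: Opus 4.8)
The plan is to produce $f_{(a,b)}$ as a bifurcation branch issuing from the homogeneous torus $f^b$ and to construct it by a Lyapunov--Schmidt reduction of the constrained Willmore system \eqref{CWEL} together with the conformal constraint. Fix $b\sim 1$, $b\neq 1$. Writing immersions $C^1$-close to $f^b$ in a normal gauge $f=f^b+u\,\vec{n}_{1,2}^b+(\text{tangential correction})$ with $u$ a function on $T^2_b$, one observes that the profile deformation we are after depends only on the equivariant variable $x$ and hence is invariant under the $S^1$-action $(z_1,z_2)\mapsto(e^{i\theta}z_1,e^{2i\theta}z_2)$ underlying \eqref{12fb}. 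Restricting to such $(1,2)$-equivariant immersions, the constrained Willmore system \eqref{CWEL} together with $\bigPi^2(f)=b$ collapses to an ODE boundary value problem in $x$ for the profile curve, carrying the two $\bigPi$-Lagrange multipliers as extra scalar unknowns; I would encode this as $\Phi(u,\alpha,\beta)=0$ with $\Phi(0,\alpha^b,\beta^b)=0$, where $(\alpha^b,\beta^b)$ are the multipliers of $f^b$, computed from the fact that $f^b$ is CMC in $S^3$ (so one may take $\omega=\mathring{\text{II}}$ in \eqref{CWEL}) via the explicit parametrisation \eqref{12fb}; the conformal parameter is then recovered as $a:=\bigPi^1(f)$.

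\textbf{The zero direction as a double point.} The essential input, and the place where the spectral curve enters, is that the linearisation $L^b:=D_u\Phi|_{(0,\alpha^b,\beta^b)}$, i.e.\ the Willmore stability (Jacobi) operator of $f^b$ penalised by $\alpha^b,\beta^b$, has a one-dimensional kernel among $(1,2)$-equivariant variations, spanned by a field $w^b$ with $\langle w^b,\vec{n}_{1,2}^b\rangle=\sin(\tfrac{s}{r}+4\tfrac{r}{s}x)$, and that $w^b$ moreover satisfies $D\bigPi^1(f^b)[w^b]=D\bigPi^2(f^b)[w^b]=0$. I would establish this by Fourier-decomposing $L^b$ in $x$ using the equivariance: each frequency yields a constant-coefficient fourth-order ODE whose characteristic exponents are read off from the spectral data of $f^b$, and one checks that exactly the frequency occurring above produces a bounded periodic solution --- equivalently, that two sheets of the rational spectral curve $\Sigma^b$ of $f^b$ collide there, i.e.\ that this is a double point of $\Sigma^b$ in the sense of Bohle. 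Locating this double point among the countably many candidates, verifying it is the ``smallest'' one and the only equivariant kernel direction, is a finite but delicate computation with the spectral data of the homogeneous torus; it is here that the interpretation of the construction as ``opening the smallest double point'' comes from.

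\textbf{Bifurcation and the $\sqrt a$-parametrisation.} Split $u=t\,w^b+v$ with $v$ ranging in a fixed complement of $\ker L^b$, and perturb $\alpha=\alpha^b+\mu$, $\beta=\beta^b+\nu$. Projecting $\Phi=0$ onto $\im L^b$ and solving for $(v,\mu,\nu)$ by the implicit function theorem leaves a one-dimensional solution curve, which I would parametrise by $t$. The reflection symmetry of $f^b$ (which also acts on the branch) forces the conformal coordinate $a=\bigPi^1(f(t))$ to vanish to second order in $t$ --- the first-order term is $D\bigPi^1(f^b)[w^b]=0$, and the symmetry kills the next possible obstruction --- so that $a=a(t)=\kappa\,t^2+O(t^3)$ with $\kappa=\kappa(b)$, and the surface depends evenly on $t$. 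The crucial quantitative step is then to show $\kappa\ne 0$ and pin down its sign: one computes a third-order variation of $\mathcal{W}$ (penalised) at $f^b$ in the direction $w^b$, equivalently the second-order data of the opened double point on $\Sigma^b$; after checking $\kappa(b)>0$ for $b\sim 1$, $b\ne 1$, one selects the branch with $t\geq 0$. This produces a family $f_{(a,b)}$ depending smoothly on $\sqrt a$, with $f_{(a,b)}\to f^b$ as $a\to 0$ and $\partial_{\sqrt a}f_{(a,b)}|_{a=0}$ proportional to $w^b$; after an affine normalisation of $a$ its normal part is exactly the stated $\sin$. Non-degeneracy of $f_{(a,b)}$ for $a\ne 0$ is then automatic: the kernel direction has been absorbed into $t$, so along the branch the reduced --- hence the full, conformally constrained --- Jacobi operator is invertible.

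\textbf{Lagrange multipliers and the main obstacle.} Feeding the branch into the reduced equations gives $\alpha_{(a,b)}$ and $\beta_{(a,b)}$ as functions of $t$; a Taylor expansion in $t$, using the symmetry and $D\bigPi^1(f^b)[w^b]=D\bigPi^2(f^b)[w^b]=0$, shows that $\beta_{(a,b)}\to\beta^b$, the $\bigPi^1$-multiplier of $f^b$, and $\alpha_{(a,b)}\to\hat\alpha^b$ for a value $\hat\alpha^b$ one computes explicitly, with the sign of the leading correction yielding the monotone one-sided convergence $\alpha_{(a,b)}\nearrow\hat\alpha^b$ and a direct evaluation giving $\hat\alpha^b>0$. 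I expect the genuinely hard part to be twofold: (i) the explicit identification, minimality, and uniqueness of the correct double point on $\Sigma^b$ together with the exact description of $\ker L^b$; and (ii) the third-order computations establishing $\kappa\ne 0$, fixing the signs of $\kappa$ and of the leading correction to $\alpha_{(a,b)}$, and giving $\hat\alpha^b>0$ --- all uniformly for $b$ in a punctured neighbourhood of $1$. Here the reduction to ODEs and the integrable-systems normal form of the homogeneous torus are what make these third-order variational computations at $f^b$ feasible at all.
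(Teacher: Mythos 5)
Your proposal takes a genuinely different route from the paper's proof. The paper constructs the family $f_{(a,b)}$ \emph{explicitly} by the integrable-systems machinery: it takes constrained Willmore Hopf cylinders (parametrized through the Weierstrass $\wp$-, $\sigma$- and $\zeta$-functions via Theorem~\ref{curves}), passes to their associated family to reach $(1,2)$-equivariance, and imposes the closing condition $\Phi(g_2,g_3,\mu_\theta)\equiv 2\pi$ on the angle function, solving it by the implicit function theorem near the degenerate ($D=0$) homogeneous limit. The coverage of an open neighbourhood of $(0,b)$ in the moduli space (Step~2) is then proved separately, using real analyticity and the uniqueness of constrained Willmore minimizers in rectangular classes from \cite{NdiayeSchaetzle1} to exclude the degenerate case \eqref{constant1}. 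The normal variation and the one-sided convergence of $\alpha_{(a,b)}$ (Steps~3--4) are then read off from explicit identities involving $q_\theta$, $\xi_\theta$, $C_\theta$ and the monodromy. Your Lyapunov--Schmidt scheme replaces all of this by an abstract bifurcation from a kernel direction of the penalized Jacobi operator; this is closer in spirit to what the companion paper \cite{HelNdi1} does when \emph{using} the present family, not to how this paper \emph{builds} it. What the paper's approach buys is computability: the hard quantitative ingredients you flag in your last paragraph --- the third-order information $\kappa(b)\neq 0$, the sign of $\ddot\alpha$, the value $\hat\alpha^b$ --- are precisely the things the elliptic-function parametrization makes tractable, whereas in your scheme they would have to be done by a direct third-variation computation at $f^b$ that you sketch but do not carry out.

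Two concrete gaps in your argument. First, the kernel of $\delta^2\mathcal W_{\alpha^b,\beta^b}(f^b)$ among $(1,2)$-equivariant normal fields is \emph{two}-dimensional, not one: by Lemma~\ref{stabilitykernel} it is spanned by $\sin\big((\tfrac{s}{r}+4\tfrac{r}{s})\tilde x\big)\vec{n}^b$ and $\cos\big((\tfrac{s}{r}+4\tfrac{r}{s})\tilde x\big)\vec{n}^b$, both depending only on the profile parameter. These differ only by a translation of $\tilde x$ (see \eqref{additionstheoreme}), so your reduced equation inherits an $S^1$-symmetry and the reduction must be done equivariantly (or after fixing a translation slice); as written your reduction assumes a simple kernel and would miss this. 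Second, the assertion that the branch is even in $t$ so that $a=\kappa t^2+O(t^3)$ with $\kappa\neq 0$ is the crux: in the paper this is encoded in Remark~\ref{regularityM}/\ref{L= g_3^2} (the $\sqrt{-D}$-type singularity at $D=0$, i.e.\ the square-root behaviour of $\tilde\lambda$ in $L-L(0)$), and the non-vanishing is again supplied by the explicit isoperimetric-type inequality $\delta^2 L(f^b)(\hat\varphi,\hat\varphi)>0$. Asserting that ``the symmetry kills the next possible obstruction'' and that $\kappa>0$ without an actual computation is exactly where the argument would stall; the reason the paper can carry it out is the integrable-systems normal form, which you have implicitly discarded by going to an abstract reduction.
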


The surfaces constructed are automatically embedded as they converge smoothly to the homogenous tori $f^b$. In fact we construct multiple families of embedded constrained Willmore tori deforming \;$f^b$ (In the Hopf tori case, we obtain a family for every $b>1$).  The family stated in the Theorem is the one with the smallest energy close to $f^b$ among the families constructed. 

\begin{Rem}
Constrained Willmore minimizers in a particular conformal class only exist, if infimum energy in that class is below $8 \pi$ \cite{KuwertSchaetzle}. Therefore, the constructed families helps to obtain a quantitative estimate on the size of the neighborhood of rectangular conformal classes in which constrained Willmore minimizers exist.
\end{Rem}
We want to give some heuristics why this family should be the constrained Willmore minimizer in their respective conformal class. 
\begin{Def}
For $\;\alpha, \beta \in \R\;$ we use the abbreviations

\begin{equation}
\begin{split}
\mathcal W_{\alpha, \beta}(f) &:= \mathcal W(f) - \alpha \bigPi^1(f) - \beta \bigPi^2(f)\\
\mathcal W_{\alpha}(f) &:= \mathcal W(f) - \alpha \bigPi^1(f). \\
\end{split}
\end{equation}

\end{Def}
The main observation is the following.
\begin{Lem}{\cite[Lemma 2.1]{HelNdi1}}\label{mainobservation}$\ $\\
Let $\;\tilde f^{(a,b)}\;$ be a family of smooth immersions with conformal type 

$${(a,b) =: (\tilde a^2, b) \in [0, a^2_0) \times (1-b_0, 1+b_0)}$$ 

for some positive numbers  $\;a_0,\; b_0 \in \R\;$ such that the map

$$(\tilde a,b) \longmapsto \tilde f^{(a,b)} \in C^2\left(\left[0, a_0\right) \times (1-b_0, 1+b_0), W^{4,2}\right),$$

and  $\;\delta  \bigPi^1 \big({\tilde f^{(0,b)}}\big ) = 0,\;$ but $\;\delta   \bigPi^1 \big({\tilde f^{( a,b)}}\big ) \neq 0\;$ for $\;a \neq 0.\;$  Further, let $\;\tilde \alpha( a,b)\;$ and $\;\tilde \beta( a,b)\;$ be the corresponding Lagrange multipliers  satisfying 

$$(\tilde a,b) \longmapsto \tilde \alpha(a,b), \tilde \beta(a,b) \in C^2\left([0, a_0) \times (1-b_0, 1+b_0), W^{4,2}\right),$$

and 
$\;\tilde \omega( a, b):= \mathcal W\big({\tilde f^{( a,b)}}\big ).\;$ Then we obtain

\begin{enumerate}
\item$$\frac{\partial \tilde \omega( a,b)}{\partial a} = \tilde \alpha(a,b) \text{ for } a \neq 0 \quad \text{ and } \quad \lim_{\tilde a\rightarrow 0} \frac{\partial \tilde \omega(a,b)}{\partial  a}  = \tilde\alpha(0,b) =: \tilde\alpha^b \quad \forall b,$$

\item$$\frac{\partial \tilde \omega( a,b)}{\partial b} = \tilde\beta( a,b) \text{ for } a\neq 0 \quad \text{ and } \quad \lim_{ a\rightarrow 0} \frac{\partial \tilde \omega( a,b)}{\partial b}  = \tilde\beta(0,b)  =: \tilde \beta^b\quad \forall b,$$

\item
$\varphi^b:= \del_{\tilde a} f^{( a, b)}|_{a=0}$ satisfies 

$$ \delta^2\left(\mathcal W_{\tilde \alpha^b, \tilde \beta^b} \right)\big({\tilde f^{( 0,b)}}\big )(\varphi^b, \varphi^b) = 0 \quad \forall b.$$

\end{enumerate}
\end{Lem}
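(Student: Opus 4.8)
The plan is to read off the first two claims from the Lagrange-multiplier characterisation of the constrained Willmore surfaces $\tilde f^{(a,b)}$, and to get the third by differentiating that characterisation once more in the deformation parameter. Throughout write $f^b:=\tilde f^{(0,b)}$. Since $\tilde f^{(a,b)}$ is constrained Willmore with multipliers $\tilde\alpha(a,b),\tilde\beta(a,b)$, one has the identity of linear functionals
\[
\delta\mathcal W\big(\tilde f^{(a,b)}\big)=\tilde\alpha(a,b)\,\delta\bigPi^1\big(\tilde f^{(a,b)}\big)+\tilde\beta(a,b)\,\delta\bigPi^2\big(\tilde f^{(a,b)}\big).
\]
For fixed $b$ and $a\neq0$ the map $a\mapsto\tilde f^{(a,b)}$ is $C^2$ (there $\tilde a=\sqrt a$ is smooth), so $\partial_a\tilde\omega(a,b)=\delta\mathcal W(\tilde f^{(a,b)})\big[\partial_a\tilde f^{(a,b)}\big]$ by the chain rule. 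Evaluating the identity above on the variation $\partial_a\tilde f^{(a,b)}$ and using $\bigPi^1(\tilde f^{(a,b)})=a$, $\bigPi^2(\tilde f^{(a,b)})=b$ — hence $\delta\bigPi^1(\tilde f^{(a,b)})[\partial_a\tilde f^{(a,b)}]=1$ and $\delta\bigPi^2(\tilde f^{(a,b)})[\partial_a\tilde f^{(a,b)}]=0$ — yields $\partial_a\tilde\omega(a,b)=\tilde\alpha(a,b)$, which is (1) for $a\neq0$; the limit is then immediate from the assumed continuity of $\tilde a\mapsto\tilde\alpha(\tilde a^2,b)$. Claim (2) is the same computation with $a$ and $b$ interchanged, now using $\delta\bigPi^1[\partial_b\tilde f^{(a,b)}]=0$ and $\delta\bigPi^2[\partial_b\tilde f^{(a,b)}]=1$.

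For (3) I would differentiate the displayed identity with respect to $\tilde a$ at $\tilde a=0$, tested against a fixed direction $\phi$. With $\alpha(\tilde a):=\tilde\alpha(\tilde a^2,b)$, $\beta(\tilde a):=\tilde\beta(\tilde a^2,b)$ (both $C^2$) and $\partial_{\tilde a}\tilde f^{(\tilde a^2,b)}|_{\tilde a=0}=\varphi^b$, the left side gives $\delta^2\mathcal W(f^b)[\varphi^b,\phi]$ (chain rule, $\phi$ held fixed), and the right side, by the product rule, gives
\[
\alpha'(0)\,\delta\bigPi^1(f^b)[\phi]+\tilde\alpha^b\,\delta^2\bigPi^1(f^b)[\varphi^b,\phi]+\beta'(0)\,\delta\bigPi^2(f^b)[\phi]+\tilde\beta^b\,\delta^2\bigPi^2(f^b)[\varphi^b,\phi].
\]
The first summand vanishes for every $\phi$ because $\delta\bigPi^1(f^b)=0$ by hypothesis. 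Now set $\phi=\varphi^b$; differentiating $\bigPi^2(\tilde f^{(\tilde a^2,b)})\equiv b$ in $\tilde a$ at $0$ shows $\delta\bigPi^2(f^b)[\varphi^b]=0$, so the third summand drops as well, and one is left with $\delta^2\mathcal W(f^b)[\varphi^b,\varphi^b]=\tilde\alpha^b\,\delta^2\bigPi^1(f^b)[\varphi^b,\varphi^b]+\tilde\beta^b\,\delta^2\bigPi^2(f^b)[\varphi^b,\varphi^b]$, which is precisely $\delta^2(\mathcal W_{\tilde\alpha^b,\tilde\beta^b})(f^b)(\varphi^b,\varphi^b)=0$.

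The algebra is short, so the work is in the analytic foundations. One must know that $\mathcal W,\bigPi^1,\bigPi^2$ are $C^2$ functionals on the Banach manifold of $W^{4,2}$-immersions, so that the chain- and product-rule steps are legitimate and $\delta^2$ coincides with the Fr\'echet Hessian in the linear model, and that the constrained Willmore equation really takes the Lagrange-multiplier form used above (i.e. $\bigPi$ is a submersion near $f^b$ in the directions involved, so that the multipliers exist and depend on $(a,b)$ as assumed). The point requiring the most care is the square-root reparametrisation $a=\tilde a^2$: the family is only assumed $C^2$ in $\tilde a$, not in $a$ — which is exactly what permits $\varphi^b=\partial_{\tilde a}\tilde f^{(a,b)}|_{a=0}$ to be nonzero and to carry the leading-order deformation — so the two limit statements in (1)--(2) must be run through $\tilde a$, and the differentiation in (3) must be organised in the $\tilde a$ variable from the start. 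I expect this bookkeeping, together with justifying that the first-variation identity may be differentiated in the base point, to be the only real obstacle.
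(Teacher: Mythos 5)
Your argument is correct and is exactly the ``straight forward'' computation the paper alludes to without writing out: parts (1) and (2) follow from pairing the Lagrange-multiplier identity $\delta\mathcal W = \tilde\alpha\,\delta\bigPi^1 + \tilde\beta\,\delta\bigPi^2$ against $\partial_a\tilde f^{(a,b)}$ and $\partial_b\tilde f^{(a,b)}$ using $\bigPi^1(\tilde f^{(a,b)})=a$, $\bigPi^2(\tilde f^{(a,b)})=b$, and part (3) from differentiating that identity once more in $\tilde a$ at $0$ and testing against $\varphi^b$, with $\delta\bigPi^1(f^b)=0$ and $\delta\bigPi^2(f^b)[\varphi^b]=0$ killing the $\alpha'(0)$ and $\beta'(0)$ terms. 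You also correctly flag the one genuine subtlety, namely that the $C^2$ regularity holds in $\tilde a=\sqrt a$ rather than in $a$, so the differentiation in (3) and the limits in (1)--(2) must be organized in $\tilde a$.
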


The proof of the Lemma is very straight forward and shows that by fixing $b$,  the variational vector field of a family of constrained Willmore tori $\tilde f^{(a,b)}$, parametrized by its conformal type, at the homogenous torus $f^b$ must be a zero direction of the the second variation of $\mathcal W_{\tilde \alpha^b, \beta^b}$, where $\beta^b$ is the Lagrange multiplier of $f^b.$ Since the Lagrange multipliers are the derivative of the Willmore energy along the family,  the necessary condition to construct a family of minimizers is  that the corresponding Lagrange multipliers converge for $a\longrightarrow 0$ to the smallest possible $\alpha>0$  such that  $\mathcal W_{\alpha, \beta^b}$ has a non-trivial zero direction. Since $\mathcal W_{\alpha=0, \beta^b}$ is strictly stable (invariance), the smallest possible $\alpha$ can be defined as follows.
\begin{Def}\label{alphab}
Let $\;\beta^b\;$ be the $\;\bigPi^2$-Lagrange multiplier of the homogenous torus $\;f^b.\;$ Then we define
$$\alpha^b:=\text{ max }\{\alpha \ | \ \  \delta^2 \mathcal W_{\alpha, \beta^b} \geq0\}.$$
\end{Def}

In \cite[Section 3]{HelNdi1} we computed $\alpha^b$ for $b \sim1$ which is uniquely determined by its non-trivial kernel element. The identification of the normal variation of $f_{(a,b)}$ at $f^b$ in the main theorem therefore shows that the family of constrained Willmore tori constructed here has the desired limit Lagrange multiplier $\hat \alpha^b = \alpha^b$. We have included the Stability computations fo \cite[Section 3]{HelNdi1} in the appendix to make the paper on hand more self-contained.

In this paper we use integrable system theory to construct the families of constrained Willmore tori by opening double points on the spectral curve of the homogenous torus $f^b$.  The resulting surfaces are of spectral genus 2 as they lie in the associated family of constrained Willmore Hopf cylinders \cite[Theorem 9]{He2}. These double points occurs exactly at those points where  $\delta^2 \mathcal W_{\alpha^b, \beta^b}$ has non-trivial zero directions, indicating that the genus of the spectral curve is closely related to the Nullity of the constrained Willmore torus. Due to the different normalization of the Lagrange multipliers in the integrable systems and analysis approach, we decided to compute the normal variations of $f_{(a,b)}$ at $f^b$ in order to identify of the limit Lagrange multiplier.
Moreover, we show that the $\bigPi^1$-Lagrange multiplier converges from below to $\alpha^b$\; as $a \longrightarrow 0.$ These features of the family are needed in \cite{HelNdi1} to apply bifurcation theory to identify them as constrained Willmore minimizing tori.
\begin{figure}
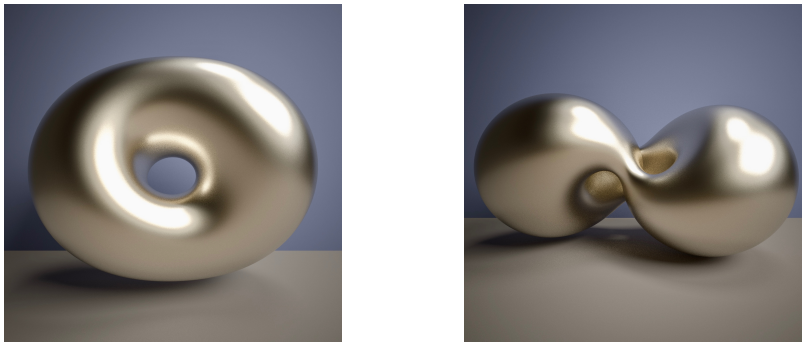

\vspace{0.5cm}
\includegraphics[width= 0.3\textwidth]{hopf-torus-21.jpg}\hspace{1.5cm}
\includegraphics[width= 0.3\textwidth]{hopf-torus-221.jpg}
\caption{ 
Two $(1,2)$-equivariant constrained Willmore tori (with intrinsic period $1$). The tori lie in a $2$-parameter family of surfaces deforming the Clifford torus. This family minimizes the Willmore functional in the respective conformal classes for surfaces ``close enough" to the Clifford torus.  Images by Nick Schmitt.}
\label{1lobe}
\end{figure}

The paper is organized as follows: in the first section basic integrable systems notations for constrained Willmore surfaces are introduced. We define equivariant immersions and state the Euler-Lagrange equations in terms of the conformal Hopf differential. The integrable structure of the equations is hereby encoded in the associated family of solutions. In Section 3 we construct constrained Willmore Hopf tori deforming homogenous tori $f^b$ and determine their normal variation at $f^b$. The forth section contains the proof of the main Theorem which is divided into four steps:  

\begin{enumerate}
\item construction of a $2$-parameter family $f_{(a,b)}$ of $(1,2)$-equivariant constrained Willmore tori associated to constrained Willmore Hopf cylinders.\\
\item show that candidates cover an open set close to the Clifford torus in the moduli space of conformal tori.
\item computation of the normal variation $\;\big(\del_{\sqrt{a}}f_{(a,b)} |_{a=0}\big)^\perp\;$ for the constructed family at the homogenous torus $f^b$ for fixed $b \sim1\; $ and $b\neq 1$.
\item show that the corresponding Lagrange multipliers $\;\alpha_{(a,b)}\;$ converge to $\;\alpha^b\;$ from below as $\;a \searrow 0$.\\
\end{enumerate}

We have included \cite[Section 3]{HelNdi1} dealing with the stability computations of the penalized Willmore functional at homogenous tori  as an appendix to make the paper more self-contained. These computations is used to identify the normal variations of the families of constrained Willmore Hopf tori constructed in Section 3.
\subsection*{Acknowledgments:} We want to thank Prof. Dr. Franz Pedit and Prof. Dr. Reiner Sch\"atzle for bringing our attention to the topic of this paper and for helpful discussions. We would also like to thank Dr. Nicholas Schmitt for supporting our theoretical work through computer experiments that helped finding the right properties for candidates surfaces to be minimizer and for producing the images used here. Moreover, we thank Dr. Sebastian Heller for helpful discussions.

\section{Preliminaries}\label{candidates}
We start with fixing some notations and basic properties of equivariant constrained Willmore surfaces.

\begin{Def}\label{S1action}
A map $\;f: \C \longrightarrow S^3 \;$  is called $\;\R$-equivariant, if there exist group homomorphisms 
\begin{equation*}
\begin{split}
&M: \R \longrightarrow \text{M\"ob}(S^3), \quad t \longmapsto M_t,\\
&\tilde M: \R \longrightarrow \{\text{conformal transformations of } \C\}, \quad t \longmapsto \tilde M_t,
\end{split}
\end{equation*}
such that 
$$f \circ \tilde M_t = M_t \circ f,  \text{ for all } t.$$
Here $\;$M\"ob$(S^3)\;$ is the group of M\"obius transformations of $\;S^3.$
\end{Def}

\begin{Rem}
If the map $\;f: \C \longrightarrow S^3\;$ is doubly periodic, then the resulting surface is a torus. A necessary condition for doubly periodicity of $\;f\;$ is that both $\;M_t\;$ and $\;\tilde M_t\;$ are periodic in $\;t,\;$ see \cite[Section 5]{Hethesis}. The possible periodic $1$-parameter subgroups $\;M_t\;$ and $\;\tilde M_t\;$ that can appear in the above definition can be easily classified, see for example \cite[Section 5]{Hethesis}. Thus up to the choice of a holomorphic coordinate $\;z = x+iy\;$ and isometries of $S^3$ we can assume without loss of generality that an equivariant torus

$$f: \C/\Gamma \longrightarrow S^3 \subset \C^2,$$

 for a lattice $\Gamma\subset \C,$  is given by 
 
\begin{equation}\label{equivariant}
f(x,y) = \begin{pmatrix}e^{imx} &0\\0 & e^{ikx} \end{pmatrix} f(0,y),
\end{equation}

for coprime integers $\;m\;$ and $\;k.\;$  In this case we call $\;f\;$ an $(m,k)$-equivariant surface and 
the curve given by $\;\gamma(y) = f(0,y)\;$ is called the profile curve which has to verify certain closing conditions for $\;f\;$ to be a torus. This notion of equivariant surfaces includes the well known examples of surfaces of revolution ($m=0,\; k=1$) and the Hopf cylinders ($m=1,\; k=1$), as for example discussed in \cite{He1}. 
\end{Rem}

\begin{Rem}
A $\R$-equivariant immersion $\;f: \C \longrightarrow S^3\;$ such that $\;M(\R)\;$ is not a periodic subgroup of M\"ob$(S^3)$ but is smoothly close to a $S^1$-equivariant surface is still of the form \eqref{equivariant} (up to conjugacy) with $\;m,\;k \in \R.\;$ This is due to the fact that  whether a $\;M \in SO(4,1)\;$ lie in the conjugacy class of $\;\tilde M \in SO(4)\;$ reduce to a condition on the trace of $\;M\;$ lying in a certain intervall.
\end{Rem}

In conformal geometry surfaces mapping into the conformal $S^3$ have two invariants which determine the surface up to M\"obius transformations, see  \cite[Theorem 3.1]{BuPP}.  The first one is the conformal Hopf differential $\;q.\;$ The second is the Schwarzian derivative $c$. In the equivariant case, the conformal Hopf differential $q$ determines the Schwarzian derivative $c$ up to a complex integration constant by the Gau\ss-Codazzi equations.
Thus we will only use $q$ in the following. In contrast to \cite{BuPP} we consider the conformal Hopf differential as a complex valued function by trivializing the canonical bundle $K_{\C/\Gamma}$ via $dz$.\\
\begin{Def}
Let $f:M \longrightarrow S^3$ a conformal immersion. The function
 $$q := \frac{\II \left(\frac{\del }{\del z},\frac{\del}{\del z} \right)}{|df|} $$
 is called the conformal Hopf differential of $f.$ \\
\end{Def}

\begin{Rem}\label{kurvenq}
For equivariant tori the conformal Hopf differential as well as the Schwarzian derivative depend only on the profile curve parameter and is periodic, see \cite[Proposition 3]{He1}.\\
\end{Rem}

\begin{Exa}\label{rotaionalHopf}
By definition the conformal Hopf differential of surfaces of revolution is real valued and given by $\;4q  = \kappa\;$ where $\;\kappa\;$ is the curvature of the (arclength parametrized) profile curve $\gamma$ in the upper half-plane viewed as the hyperbolic plane. In the same way one can compute for Hopf cylinders that $\;4q = \kappa  + \sqrt{G}i,\;$ where $\;\kappa\;$ is the geodesic curvature of the corresponding profile curve in a round $2$-sphere of curvature $\;G > 0$.
\end{Exa}
\subsection{Equivariant constrained Willmore tori and their associated family}$\ $\\
For equivariant constrained Willmore tori we give the Euler-Lagrange equation in terms of their Schwarzian derivative. This equation has an invariance which defines an associated family of constrained Willmore surfaces to a given solution. We start by recalling the Euler Lagrange equation of equivariant constrained Willmore surfaces  \cite[Equation (34) and Equation (33b)]{BuPP} specified to the equivariant case:\\

\begin{The}[\cite{BuPP}]
Let $\;f: T^2\cong \C/\Gamma \longrightarrow S^3\;$ be a conformally parametrized equivariant immersion and $\;q\;$ its conformal Hopf differential. Then $\;f\;$ is constrained Willmore if and only if there exists a $\;\mu + i \lambda\in \C\;$ such that $\;q\;$ satisfies the equation: 

\begin{equation}\label{EL}
\begin{split}
q'' +  8 \big(|q|^2 + &C\big)q - 8 \xi q = 2Re\big((-\mu+ i \lambda) q\big),\\
2\xi' &= \bar q'q - q' \bar q,
\end{split}
\end{equation}

\noindent
where $\;\xi\;$  is a purely imaginary function, $\;C\;$ a real constant and the derivative is taken with respect to the profile curve parameter.\\

\end{The}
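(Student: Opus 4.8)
The statement is a specialisation to the equivariant situation of the general constrained Willmore Euler--Lagrange system of \cite{BuPP}, so the plan is to recall that system and carry out the reduction. By the fundamental theorem of M\"obius surface theory \cite[Theorem 3.1]{BuPP} quoted above, a conformal immersion $f\colon M\to S^3$ is determined up to M\"obius transformations by the pair $(q,c)$ of conformal Hopf differential and Schwarzian derivative; these satisfy the conformal Gauss--Codazzi equations, and $f$ is constrained Willmore precisely when there is a holomorphic quadratic differential $\omega\in H^0(K_M^2)$ --- the Lagrange multiplier --- such that $(q,c,\omega)$ solve the system \cite[Eq.~(33b) and~(34)]{BuPP}. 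First I would write these two equations explicitly. On the torus $M=\C/\Gamma$ the canonical bundle is trivial, so $H^0(K_M^2)=\C\,dz^2$; trivialising by $dz$ and writing $\omega=(-\mu+i\lambda)\,dz^2$ introduces the complex constant $\mu+i\lambda$ of the statement.

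Next I would use equivariance to collapse the PDE system to a system of ODEs in the profile-curve parameter. By Remark \ref{kurvenq}, on an equivariant torus $q$ and $c$ (in the $dz$-trivialisation) depend on the profile parameter alone, because moving along $\partial_x$ acts on $f$ by a M\"obius transformation, under which the conformal invariants $q$ and $c$ are unchanged; hence every $\partial_x$-derivative in \cite[Eq.~(33b) and~(34)]{BuPP} drops out. As recorded in the discussion preceding the theorem, the reduced Gauss--Codazzi equations then determine $c$ from $q$ up to a complex integration constant: the differential (Codazzi) part, expressed through the relevant scalar component $\xi$ of the Schwarzian, becomes exactly $2\xi'=\bar q'q-q'\bar q$ --- whose right-hand side is manifestly purely imaginary, forcing $\xi$ to be purely imaginary as well --- while the remaining real ambiguity in $c$ is recorded as the real constant $C$.

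Substituting this expression for $c$ back into the main equation \cite[Eq.~(34)]{BuPP} then yields
\[
q''+8\bigl(|q|^2+C\bigr)q-8\xi q=2\,\Re\bigl((-\mu+i\lambda)q\bigr),
\]
where the cubic term and the constant $C$ come from the $|q|^2$- and Schwarzian-contributions of (34), the term $-8\xi q$ from the coupling to the auxiliary function introduced above, and the right-hand side is the pairing $\langle\omega,\mathring{\mathrm{II}}\rangle$ read off in the $dz$-trivialisation with $\omega=(-\mu+i\lambda)\,dz^2$. Since for an actual immersion the Gauss--Codazzi equations hold automatically, this shows that an equivariant $f$ is constrained Willmore if and only if its conformal Hopf differential solves \eqref{EL} for some $\mu+i\lambda$; conversely, a solution of \eqref{EL} (together with the associated $\xi$ and $c$) integrates, via \cite[Theorem 3.1]{BuPP}, to an equivariant constrained Willmore surface with that Hopf differential. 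As a consistency check one notes that for a surface of revolution $q$ is real, which forces $\xi\equiv0$, and \eqref{EL} collapses to the elastica ODE $q''+8q^3+(8C+2\mu)q=0$, recovering the picture of Example \ref{rotaionalHopf} and \cite{Pinkall}.

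The only point that needs genuine care is the bookkeeping in the reduction: tracking how the M\"obius $S^1$-action acts on the conformal frame, and hence on the weighted sections underlying $q$ and $c$; identifying which real combination of the components of $c$ persists as $\xi$ and which is pinned by the Gauss equation; and matching the sign and normalisation conventions of \cite{BuPP} so that the Lagrange multiplier enters as exactly $2\,\Re((-\mu+i\lambda)q)$ and $C$ carries the claimed sign. Once these conventions are fixed the remainder is a routine substitution, and --- the tensorial Euler--Lagrange equations being already available from \cite{BuPP} --- there is no further analytic content to supply.
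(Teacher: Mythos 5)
The paper offers no proof of this statement: it is explicitly imported as \cite[Eq.~(33b) and (34)]{BuPP} specialised to the equivariant case, and the subsequent Remark merely records what the split into real/imaginary parts and the quantities $\xi,C$ mean (Gauss, Codazzi, and $\xi=i\tfrac{mk}{4}H$, $C=-\tfrac14(m^2+k^2)$ from \cite[Thm.~6.1, 7.1]{Hethesis}). Your outline tracks that same route — quote the tensorial Euler--Lagrange system of \cite{BuPP}, trivialise $K_M^2$ by $dz^2$ to get a complex-constant multiplier, and use equivariance (Remark~\ref{kurvenq}) to drop the $\partial_x$-derivatives — so in spirit it agrees with the paper's treatment. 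However, you candidly stop short of the actual computation ("once these conventions are fixed the remainder is a routine substitution"), so what you have supplied is a plan, not a proof; to match what the paper implicitly relies on you would still need to carry out the reduction of \cite[Eq.~(33b),(34)]{BuPP} and verify the normalisations producing $8(|q|^2+C)q$, $-8\xi q$, and $2\Re((-\mu+i\lambda)q)$, which is precisely the content delegated to \cite{BuPP} and \cite{Hethesis}.

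One small slip in your consistency check: for a surface of revolution $q$ is real, so $\bar q'q-q'\bar q\equiv 0$ and the second equation gives only $\xi'=0$, i.e.\ $\xi$ is a purely imaginary \emph{constant}; the vanishing $\xi\equiv 0$ is a geometric fact (the formula $\xi=i\tfrac{mk}{4}H$ with $m=0$), not a consequence of the ODE. With $\xi\equiv 0$ the reduction to $q''+8q^3+(8C+2\mu)q=0$ is then correct.
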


\begin{Rem}The real part of Equation (\ref{EL}) is the actual constrained Willmore Euler-Lagrange equation. The imaginary part of the equation is the Codazzi equation and the equation on $\;\xi\;$ is the Gau\ss  $\ $equation. 
The Euler-Lagrange equation for general surfaces can be found in \cite{BuPP}.
For $(m,k)$-equivariant tori the function $\;\xi\;$ is given by 

$$\xi = i \tfrac{mk}{4}H,$$

 where $\;H\;$ is the mean curvature of the immersion into $\;S^3\;$ and $\;C = -\tfrac{1}{4}(m^2+k^2),\;$ as computed in \cite[Theorem 6.1 and Theorem 7.1]{Hethesis}.  \\
\end{Rem}


Let $\;f:  T^2 \longrightarrow S^3\;$ be an equivariant constrained Willmore surface with conformal Hopf differential $\;q.\;$  Consider $\;f\;$ as a doubly periodic immersion from $\;\C\;$ into $S^3$. By relaxing both periodicity conditions, i.e., by allowing general profile curves and real numbers for the equivariance type $\;m\;$ and $\;k,\;$ we obtain for $\;e^{i \theta} \in S^1\;$ a circle worth of associated constrained Willmore surfaces $\;f_\theta\;$ for a given $\;f,\;$ the so called constrained Willmore associated family, see \cite[Equation (35)]{BuPP}. These are obtained as follows:\\

\noindent
Let $\;q\;$ be a solution of \eqref{EL} and 
let $\;q_\theta\;$  be the family of complex functions given by

$$q_\theta =  q e^{2i\theta}, \quad e^{i \theta} \in S^1.$$  

Moreover, let

\begin{equation}\label{Lagrange}
\begin{split}
C_\theta &=  C + \tfrac{1}{8}\text{Re}((e^{4i\theta}-1) (-\mu-i \lambda))\\
\xi_\theta &=  \xi + \tfrac{1}{8}\text{Im}((e^{4i\theta}-1)(-\mu-i \lambda))\\
-\mu_\theta + i\lambda_\theta &= e^{-4i\theta}(-\mu + i \lambda).
\end{split}
\end{equation}

\noindent
Then $\;q_\theta\;$ satisfies Equation \eqref{EL}  with parameters $\;C_\theta, \;\mu_\theta, \;\lambda_\theta\;$ and function $\;\xi_\theta.\;$ In particular, the function $\;q_\theta\;$ and $\;\xi_\theta\;$ satisfies the Gau\ss-Codazzi equations for surfaces in $\;S^3.\;$ Thus there exist a family of surfaces $\;f_\theta\;$ with conformal Hopf differential $\;q_\theta\;$ and mean curvature given by $\;\xi_\theta.\;$ The so constructed surfaces $\;f_\theta\;$ are automatically constrained Willmore for every $\;\theta \in \R.\;$\\ 

\begin{Def}[Constrained Willmore Associated Family]\label{associatedfamily}$\ $\\
Let $\;f\;$ be a constrained Willmore surface and $\;q\;$ its conformal Hopf differential. The family of surfaces $\;f_\theta,\;$ $\theta \in \R\;$ determined by the conformal Hopf differential $\;q_{\theta} = q e^{2i\theta}\;$ is called the constrained Willmore associated family of $\;f.$
\end{Def}

\noindent
Surfaces with the same conformal Hopf differential and the same Schwarzian derivative (which is determined by the function $\;\xi\;$ and the real constant $\;C\;$) differ only by a M\"obius transformation, see \cite[Theorem 3.1]{BuPP}. As a consequence, since for an equivariant initial surface both invariants depend only on one parameter, 
all surfaces in the associated family of an equivariant constrained Willmore surface are also equivariant (and constrained Willmore). In general, these surfaces are not closed, i.e., $\;f_\theta: \C \longrightarrow S^3\;$ is not doubly periodic, even if the initial surface is.
Moreover, since a non-isothermic (or non-degenerate) surface $\;f\;$ is already determined up to M\"obius transformations by its conformal Hopf differential, the map $\;\theta \longmapsto f_\theta \in C^\infty(T^2, S^3)\;$ is  (in this case) smooth, see \cite[Theorem 3.3]{BuPP}.
\subsubsection{{\bf The associated family of homogenous tori $f^b$}}\label{Tr}$\ $\\The homogenous tori are given by the direct product of two circles with different radii. They can be parametrized by

$$f^b: \C/ (2\pi r \Z + i 2\pi s \Z), \quad f(x,y) = \begin{pmatrix}r e^{i\tfrac{x}{r}}, & s e^{i\tfrac{y}{s}}\end{pmatrix},$$

for $\;r,\;s \in \R\;$ satisfying $\;r^2+s^2=1\;$ with rectangular conformal class given by $\;b= \tfrac{s}{r}.\;$ The conformal Hopf differential (in this particular parametrization) is given by 

\begin{equation}\label{mutheta}
q=\tfrac{1}{2 rs} \quad \text{and} \quad \mu = \frac{s^2-r^2}{rs}.
\end{equation}

Since homogenous tori are isothermic, the Lagrange multiplier $\;\lambda\;$ in \eqref{EL} can be chosen arbitrarily and $\;q\;$ does not determine the surface uniquely. Thus the associated family $\;f_\theta^b\;$ is also not uniquely determined and is only smooth in $\;\theta\;$ for appropriate $\;\xi\;$ and $\;C\;$ depending on both $\;\mu\;$ and $\;\lambda.\;$ In fact, the following choice of associated family of $\;f^b\;$ seen as a map from $\;\C\;$ to $\;S^3\;$ is smooth in $\;\theta$: 

\begin{equation}\label{associated1}f_\theta^b(x,y) = \begin{pmatrix}r^\lambda_\theta e^{i\left(\tfrac{1}{r^\lambda_\theta} \cos(\theta)x-\tfrac{1}{r^\lambda_\theta}\sin(\theta)y\right)}, & s^\lambda_\theta e^{i\left(\tfrac{1}{s^\lambda_\theta} \sin(\theta)x+\tfrac{1}{s^\lambda_\theta}\cos(\theta)y\right)}\end{pmatrix}.
\end{equation}

Here $\;r^\lambda_\theta,\; s^\lambda_\theta\;$ are determined by the Lagrange multiplier $\;\tilde \mu_\theta\;$  via equation
\eqref{mutheta}, where $\;\tilde \mu_\theta\;$, $\tilde \lambda_\theta\;$ satisfies 

$$-\tilde \mu_\theta + i \tilde \lambda_\theta = e^{-2i \theta}(-\mu + i \lambda).$$ 

Our choice of the associated family ensures that for a smooth family $\;f^t\;$ of non-degenerate surfaces converging to \;$f^b \;$\; which is degenerate as $t\longrightarrow 0$\; , i.e., the map 
$$\;(0, t_0)\longrightarrow C^\infty_{Imm}, \; t \longmapsto f^t\;$$ is smooth and  $t\longrightarrow 0=f^b$ in \; $C^\infty_{Imm}$, 
also the associated family $\;f^t_\theta\;$ has the same regularity in $\;t\;$ for every fixed $\;\theta\;$ and $\;{\displaystyle \lim_{t \rightarrow 0 }}f_\theta^t = f^b_\theta.\;$ In particular, the corresponding Lagrange multipliers $\;\mu^t_\theta\;$ and $\;\lambda^t_\theta\;$ are continuous for $\;t \in [0, t_0)\;$ for every fixed $\;\theta.\;$ The equivariance type of the ``rotated'' surface $\; f^b_\theta \;$ is given by

\begin{equation}\label{cliffequiv}
\frac{m}{k} = \begin{vmatrix}\frac{\cos(\theta)s^\lambda_\theta}{\sin(\theta) r^\lambda_{\theta}}\end{vmatrix} \in [0, 1].\end{equation}

Note that the derivative of the equivariance type by $\;\theta\;$  at $\;\theta_0 \in (0, \pi/4)\;$ vanishes  if and only if $\;\lambda_{\theta}^0 =0$.

\section{Constrained Willmore Hopf cylinders.}\label{Hopf}Since tori of revolution are isothermic, they cannot cover an open set of the Teichm\"uller space. In contrast, all conformal types can be realized as (constrained Willmore) Hopf tori (i.e., $\;m=n=1$), see \cite{Pinkall, He2}. 
The Willmore energy of the Hopf cylinder reduces to the (generalized) energy  of the curve in $\;S^2\;$ and the conformal type of the torus translates into invariants of the curve, namely length and oriented enclosed area.  Thus a Hopf torus is constrained Willmore if and only if there exist Lagrange multipliers $\;\lambda\;$ and $\;\mu \in \R\;$  such that the geodesic curvature $\;\kappa\;$ of its profile curve $\;\gamma\;$ in $\;S^2\;$ (of constant curvature $\;G$) satisfies 

\begin{equation}\label{elastic}
\kappa'' + \tfrac{1}{2}\kappa^3 + (\mu + \tfrac{G}{2})\kappa + \lambda = 0.
\end{equation}

This equation can also be deduced from Equation \eqref{EL} for $\;4q= \kappa + i \sqrt{G}.\;$ The Lagrange multiplier $\;\mu\;$ is the length constraint and $\;\lambda\;$ is the enclosed area constraint. We call curves (not necessarily closed) into the round $\;S^2\;$ (with constant curvature $\;G$) satisfying equation \eqref{elastic} constrained elastic. \\

Since we are interested in periodic solutions of \eqref{elastic}, we can restrict ourselves to the initial values 

\begin{equation}\label{IVP}
\kappa'(0) = 0 \quad \text{and} \quad \kappa(0) = \kappa_0
\end{equation}
for the Euler-Lagrange equation. 

\begin{Rem}\label{parameterdependence}
The unique solution $\;\kappa(x, \kappa_0, \lambda, \mu)\;$ of the initial value problem given by \eqref{elastic} and \eqref{IVP} depends in a real analytic way on the parameters $\;\lambda,\;  \mu\;$ and the initial value $\;\kappa_0$. 
\end{Rem}

Let  $\;\gamma: \R \longrightarrow S^2\;$ be a curve and $\;\kappa\;$ its geodesic curvature. We use an integrated version of the Euler-Lagrange equation for constrained elastic curves obtained by multiplying \eqref{elastic} with $\;\kappa'\;$ and integrate. The curve $\;\gamma\;$ is therefore constrained elastic if and only if there exist real numbers $\;\mu\;$, $\lambda\;$ and $\;\nu\;$ such that $\;\kappa\;$ satisfies

\begin{equation}\label{elastic2}
(\kappa')^2 + \tfrac{1}{4}\kappa^4 + (\mu + \tfrac{G}{2})\kappa^2 + 2\lambda \kappa + \nu = 0.
\end{equation}

The freedom of the integration constant $\;\nu\;$ corresponds to the initial value $\;\kappa_0,\;$ which must be a root of the polynomial $\;P_4 = \tfrac{1}{4}\kappa^4 + (\mu + \tfrac{G}{2})\kappa + 2\lambda \kappa + \nu$. \\

\begin{Rem}
The conformal Hopf differential of the Clifford torus in $(1,1)$-parametrization $\;q_{(1,1)}\;$  is the conformal Hopf differential of the Clifford torus considered as a torus of revolution $\;q_{(1,0)}\;$ multiplied by the imaginary unit $i,$ see Example \ref{rotaionalHopf}. Thus the role of the Lagrange multipliers $\;\lambda\;$ and $\;\mu \;$ switch compared to Section \ref{Tr}. 
\end{Rem}

All constrained elastic curves in $\;S^2\;$ can be parametrized in terms of the Weierstrass elliptic functions and limits of these. 
Elliptic functions are defined on a torus $\;\C/\Gamma,\;$ where the lattice $\;\Gamma\;$ is determined by its (in general complex) lattice invariants $\;g_2\;$ and $\;g_3.\;$ For constrained elastic curves these invariants are computed in \cite[Lemma 1]{He2} to be

\begin{align}\label{invariants}
g_2 &= \frac{(\mu + \tfrac{G}{2})^2}{12} + \frac{\nu}{4} \\
g_3 &=\frac{1}{216}(\mu+ \tfrac{G}{2})^3 + \frac{1}{16}\lambda^2 - \frac{1}{24}\nu (\mu + \tfrac{G}{2}).
\end{align}

The lattice $\;\Gamma\;$ is non degenerated, i.e., has two real linear independent generators,  if and only if its discriminant $\;D := g_2^3 - 27g_3^2\neq 0.\;$ In this case we denote the generators of the lattice by $\;2 \omega_1, 2 \omega_2 \in \C.\;$ Since by construction $\;g_2, g_3 \in \R,\;$ the resulting lattice $\;\Gamma\;$ is either rectangular or rhombic. Thus we can fix $\;2\omega_1\in \R\;$ and there exits a smallest lattice point on the imaginary axis, which we denote by  $\;2\omega_3 \in i\R.\;$ For details on elliptic functions see \cite{KoecherKrieg}. Now we can parametrize all solutions of \eqref{elastic2} with periodic curvature $\;\kappa\;$ such that $\;D = g_2^3 - 27 g_3^2 \neq 0\;$ according to the following theorem \cite[Theorem 2]{He2}. \\

\begin{The}[\cite{He2}]\label{curves}
For $\;g_2,\;$ $g_3\;$ and $\;\kappa_0 \in \R\;$ with $\;D = g_2^3 - 27 g_3^2 \neq 0,\;$ the curve $$\gamma = [\gamma_1: \gamma_2]: \R \longrightarrow  \CP^1$$ with $\;\gamma_i: \R \longrightarrow \C\;$ given by

\begin{equation}
\begin{split}
\gamma_1 &=  \frac{\sigma(x+x_0 - \rho)}{\sigma(x+x_0)}e^{\zeta(\rho)(x+x_0)}\\
\gamma_2 &= \frac{\sigma(x+x_0 + \rho)}{\sigma(x+x_0)}e^{\zeta(-\rho)(x+x_0)}
\end{split}
\end{equation}

is constrained elastic in the round $\;S^2 \cong \C P^1\;$ with curvature $\;G (> 0).\;$
Hereby $\;\sigma\;$ and $\;\zeta\;$ denote the Weierstrass $\;\sigma$- and $\;\zeta$-function respectively. The parameters $\;x_0, \rho \in i\big(0, |\omega_3|\big) \;$ satisfy 

$$2\wp(x_0) + \wp(\rho) + \tfrac{1}{4}\kappa_0^2 = -\tfrac{1}{4}G < 0,$$

and $\;\omega_3\;$ is the lattice point of $\;\Gamma\;$ with smallest length lying on the imaginary axis and the upper half plane.  Moreover, all constrained elastic curves in $\;S^2\;$ with $\;D = g_2^3 - 27 g_3^2 \neq 0\;$ and periodic curvature can be obtained this way.\\
\end{The}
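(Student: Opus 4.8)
The plan is to verify Theorem~\ref{curves} directly by computing the geodesic curvature of the candidate curve $\gamma=[\gamma_1:\gamma_2]$ in $S^2\cong\CP^1$ and checking that it satisfies the first-order conservation law \eqref{elastic2} with the prescribed constants. First I would fix the round metric on $\CP^1$ of curvature $G$, written in the affine coordinate $w=\gamma_1/\gamma_2$ as the Fubini--Study type metric $\tfrac{4}{G}\tfrac{|dw|^2}{(1+|w|^2)^2}$, and record the standard formula for the geodesic curvature of a curve $w(x)$ in such a metric in terms of $w$, $w'$, $w''$ and $\bar w$. Using the quasi-periodicity identities for the Weierstrass $\sigma$- and $\zeta$-functions (the addition theorem $\tfrac{\sigma(u+v)\sigma(u-v)}{\sigma(u)^2\sigma(v)^2}=\wp(v)-\wp(u)$ and $\zeta(u+v)-\zeta(u)-\zeta(v)=\tfrac12\tfrac{\wp'(u)-\wp'(v)}{\wp(u)-\wp(v)}$), I would compute $|w|^2$, $w'/w$ and $w''/w$ as rational expressions in $\wp(x+x_0)$, $\wp(\rho)$ and their derivatives; here the parameter $\rho\in i(0,|\omega_3|)$ enters only through the constants $\wp(\rho),\zeta(\rho),\wp'(\rho)$, and $x_0$ only through the ``moving point'' $\wp(x+x_0)$.

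Next I would substitute these expressions into the geodesic-curvature formula and simplify. The normalization constraint $2\wp(x_0)+\wp(\rho)+\tfrac14\kappa_0^2=-\tfrac14 G$ is exactly what is needed to make the constant term come out right, i.e.\ to pin $G$ to the curve rather than leaving it free; one checks that with this choice $|w(x)|^2$ combined with the $\sigma$-quotients produces the conformal factor $(1+|w|^2)$ in precisely the form that cancels the denominators. After simplification $\kappa$ should emerge as an affine function of $\wp(x+x_0)$ (this is the key structural fact: constrained elastic curvatures are, up to an affine change, Weierstrass $\wp$-functions), say $\kappa(x)=A\wp(x+x_0)+B$ for explicit constants $A,B$ depending on $G,\wp(\rho),\wp'(\rho)$. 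Then $\kappa'(x)=A\wp'(x+x_0)$, and the Weierstrass differential equation $(\wp')^2=4\wp^3-g_2\wp-g_3$ immediately gives that $(\kappa')^2$ is a quartic polynomial in $\kappa$; matching coefficients against \eqref{elastic2} yields the claimed expressions \eqref{invariants} for $g_2,g_3$ in terms of $\mu,\lambda,\nu$ (equivalently, it defines $\mu,\lambda,\nu$ from $g_2,g_3$ and the curve), and confirms $\kappa_0=\kappa(0)=A\wp(x_0)+B$ is a root of $P_4$. The hypothesis $D=g_2^3-27g_3^2\neq0$ guarantees the lattice is nondegenerate so that $\wp,\sigma,\zeta$ are genuine elliptic (not degenerate trigonometric/rational) functions and $\omega_3$ is well defined.

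For the converse — that every constrained elastic curve in $S^2$ with periodic $\kappa$ and $D\neq0$ arises this way — I would argue as follows. Given such a curve, \eqref{elastic2} shows $\kappa$ satisfies a first-order ODE whose right-hand side is (minus) a quartic in $\kappa$; reducing the quartic to Weierstrass normal form via an affine substitution exhibits $\kappa$ as an affine function of a $\wp$-function with invariants given by \eqref{invariants}, and the integration constants (translation of the argument, overall phase) are fixed by the initial data $\kappa'(0)=0$, $\kappa(0)=\kappa_0$, which forces the argument offset to be $x_0$ with $\wp(x_0)$ determined up to the sign handled by choosing $x_0\in i(0,|\omega_3|)$. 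Reconstructing the curve from its geodesic curvature in $S^2$ is then unique up to isometry (fundamental theorem of curves), so it must coincide with the parametrization above once one checks that the $\sigma$-quotient formula does reproduce this $\kappa$ — which is the forward computation already done. The parameter $\rho$ is recovered from $\wp(\rho)=A-$ (explicit), and the constraint linking $x_0,\rho,\kappa_0,G$ is automatic.

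The main obstacle I anticipate is the bookkeeping in the forward curvature computation: one must carefully track how the real-analytic structure interacts with the complex elliptic functions (the curve is real, parametrized by $x\in\R$, while $\wp(x+x_0)$ with $x_0\in i\R$ is real on this line, and one needs that $\gamma_1,\gamma_2$ have the right conjugation symmetry so that $[\gamma_1:\gamma_2]$ lands in $S^2\subset\CP^1$ rather than a general conic), and the simplification producing $\kappa=A\wp+B$ only collapses after the normalization constraint is used — so it is easy to be left with an apparently non-affine expression until that identity is invoked. I would organize the computation so that the constraint is substituted as late as possible and its role is visible. (Since this is Theorem~2 of \cite{He2}, the cleanest exposition may simply cite the needed curvature formula and quasi-periodicity identities and present the verification as a sequence of substitutions.)
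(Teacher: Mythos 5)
The paper does not actually prove this theorem; it imports it verbatim as \cite[Theorem 2]{He2}, so there is no in-paper argument to compare against. Your overall strategy (compute the geodesic curvature of $[\gamma_1:\gamma_2]$ in the Fubini--Study metric, reduce to $\wp$-function identities, match against the first integral \eqref{elastic2}, and argue the converse by reducing the quartic ODE to Weierstrass form) is the right shape of argument and is presumably close to what \cite{He2} does. However, there is a concrete error at the crux of your verification.

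You claim that after simplification $\kappa(x)=A\wp(x+x_0)+B$ and that ``$(\wp')^2=4\wp^3-g_2\wp-g_3$ immediately gives that $(\kappa')^2$ is a quartic polynomial in $\kappa$.'' This cannot be right on degree grounds: if $\kappa$ is affine in $\wp$, then $(\kappa')^2=A^2(\wp')^2=A^2(4\wp^3-g_2\wp-g_3)$ is \emph{cubic} in $\wp$, hence cubic in $\kappa$, whereas the conservation law \eqref{elastic2} requires $(\kappa')^2=-\tfrac14\kappa^4-(\mu+\tfrac G2)\kappa^2-2\lambda\kappa-\nu$ to be quartic in $\kappa$. So the ansatz $\kappa=A\wp+B$ is structurally impossible (it is the ansatz for the intrinsic \emph{free} elastica, where the first integral is cubic in $\kappa$, not for the constrained elastica on $S^2$). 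Indeed, the paper records the correct relation in \eqref{kappa}: $\kappa=\pm\sqrt{-8\,\mathrm{Re}\,\wp(x+x_0)-\tfrac23(\mu+\tfrac G2)}$, i.e.\ it is $\kappa^2$, not $\kappa$, that is affine in $\mathrm{Re}\,\wp(x+x_0)$, and the purely imaginary shift $x_0$ together with the real-part operation are essential to make $(\kappa')^2=\tfrac{16(\wp')^2}{\kappa^2}$ collapse to a genuine quartic polynomial rather than a rational function. The same degree mismatch undermines your converse step: reducing $\int d\kappa/\sqrt{-P_4(\kappa)}$ to Weierstrass normal form generically requires a fractional-linear (M\"obius) substitution based at a root $\kappa_0$ of $P_4$, not an affine one, so you cannot conclude ``$\kappa$ is an affine function of a $\wp$-function.'' Once you replace the affine-in-$\wp$ ansatz with the $\kappa^2$-affine (or M\"obius-in-$\wp$) form and track the role of the normalization $2\wp(x_0)+\wp(\rho)+\tfrac14\kappa_0^2=-\tfrac14 G$ in making the constant term of the would-be quartic vanish, the rest of your outline (quasi-periodicity identities, matching \eqref{invariants}, isometry uniqueness) is sound.
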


\begin{Rem}
For given lattice invariants $\;g_2\;$ and $\;g_3\;$ we obtain thus a $2$-parameter ($x_0\;$ and $\;\rho$) family of (not necessarily closed) constrained elastic curves into a $2$-sphere of constant curvature $\;G  (>0).\;$ It is shown in \cite[Corollary 5]{He2} that there exists an unique $\;\hat x_0 \in i\big(0, |\omega_3| \big)\;$ such that the corresponding curve becomes elastic, i.e., $\;\lambda= 0.\;$  In this case we obtain 
$$\wp(\rho) = \wp(\omega_3) - \tfrac{1}{4}G.$$
 Moreover, by \cite{He2} we have  
 $$\wp(\rho) = \tfrac{1}{6} (\mu - G)$$
 
 for all $\;\lambda.\;$ If not otherwise stated we will fix $\;G=1\;$ in the following. \end{Rem}

A straightforward computation, see \cite[Proposition 4]{He2}, shows that the curve $\;\gamma,\;$  given by Theorem \ref{curves}, closes if and only if there exist $\;m, n \in \N\;$ such that

\begin{equation}\label{monodromy}
M(g_2, g_3, \rho) := \rho \eta_1 - \zeta(\rho)\omega_1 = i \tfrac{m}{2n}\pi,
\end{equation}

where $\;\zeta\;$ is again the Weierstrass $\;\zeta$-function and $\;\eta_1 = \zeta(\omega_1).\;$ $M\;$ is called the monodromy of the curve. Geometrically speaking, $\;m\;$ is the winding number of the curve and $\;n\;$ is the lobe number, i.e., the number  of (intrinsic) periods of the curvature till the curve closes in space.\\ 

For the Clifford torus the profile curve is (a piece of a) geodesic in $\;S^2\;$ and can be described using trigonometric functions. In this case we have $D= 0$ and the curve is given as the limit curve as $\;\omega_3,\;$ the smallest lattice point lying on the imaginary axis, goes to infinity. In other words, the imaginary part of $\;\tau := \tfrac{\omega_1 + \omega_3 }{2\omega_1}\;$ goes to infinity (while the real part is constantly $\;\tfrac{1}{2}$).
In this case the corresponding limits of the Weierstra\ss $ $ elliptic functions and invariants are given by, see \cite[Chapter 13.15.]{Zentgraf}:

\begin{align}
\wp_{\infty}(z) &= -a  + 3a \frac{1}{\sin^2(\sqrt{3a}z)}\\
\zeta_{\infty}(z) &= az + \sqrt{3a}\frac{\cos(\sqrt{3a}z)}{\sin(\sqrt{3a}z)}\\
\omega_1 &= \tfrac{1}{\sqrt{12a} }\pi \\
\eta_1&= \tfrac{a}{\sqrt{12a}}\pi
\end{align}

for a real number $\;a\;$ with 

\begin{equation}\label{g2g3}
g_2^\infty = 12 a^2 \quad \text{and}\quad  g_3^\infty = 8 a^3.
\end{equation} 

Since for the Clifford torus we have $\;\nu_\infty = 0\;$ (and thus $\;144a^2=12 g_2^\infty = (\mu_\infty + \tfrac{1}{2})^2$) we obtain by \eqref{invariants} that 

\begin{equation}\label{rhoinfty1}
\wp_\infty(\rho_\infty)= \tfrac{1}{6}(\mu_\infty-1) = 2a- \tfrac{1}{4},
\end{equation}
from which we can compute

\begin{equation}\label{rhoinfty2}
\rho_\infty = \frac{1}{\sqrt{3a}} \text{arsin}\left(\sqrt{\tfrac{12 a}{12a-1}}\right).
\end{equation}

 \begin{Rem}
The $\;\rho \in i \big(0, |\omega_3|\big)\;$ we use in Theorem \ref{curves} is unbounded for $\;D \longrightarrow 0.\;$ But there is another representative 

$$\tilde \rho  = \rho - (\omega_3-\omega_1) =  \rho \mod \quad \Gamma$$

 that is actually bounded and the corresponding monodromy equations \eqref{monodromy} are equivalent. Since the meaning of the parameter remains the same, we will still denote $\;\tilde \rho\;$ by $\;\rho\;$ in the following.
 \end{Rem}

By Equation \eqref{rhoinfty1} and because $$\wp(\rho) = \wp(\omega_3) -\tfrac{1}{4} $$ for elastic curves 

$$\lim_{\tau \rightarrow \infty} \wp (\omega_3) \longrightarrow 2a.$$

 Thus the invariants for any family of constrained Willmore Hopf tori converging (smoothly) to the Clifford torus satisfy $\;\omega_3 = \omega_1\;$ mod $\;\Gamma,\;$ i.e., these are wavelike solutions with $\;D \nearrow 0,\;$ see \cite[Chapter 13.15.]{Zentgraf}. The closing condition \eqref{monodromy} converges to 

\begin{equation}\label{rhoinfty3}
\rho_\infty  \tfrac{a}{\sqrt{12a}}\pi - \left (a\rho_\infty + \sqrt{3a}\frac{\cos(\sqrt{3a}\rho_\infty)}{\sin(\sqrt{3a}\rho_\infty)} \right) \tfrac{1}{\sqrt{12a} }\pi  = \tfrac{m}{2n} \pi i.
\end{equation}

For the simply wrapped Clifford torus, i.e., $\;m= 1,\;$ the Equations \eqref{rhoinfty1}, \eqref{rhoinfty2} and \eqref{rhoinfty3} yields 

$$a= \tfrac{n^2}{12}\text{ (and $n >1$)  or equivalently }\mu^n_\infty = n^2 - \tfrac{1}{2}.$$

Since the ratio of winding number and lobe number is rational for closed solutions, it remains constant throughout the deformation induced by a continuous deformation of the parameters $\;g_2,\; g_3\;$ and $\;\rho.\;$ Suppose there exists a family $\;f_t\;$ of embedded constrained Willmore Hopf tori given by Theorem \ref{elastic} converging smoothly to the Clifford torus as $\;t \longrightarrow 0,\;$ then  its Lagrange multiplier $\;\mu_t\;$ necessarily converges to $\;n^2 - \tfrac{1}{2},\;$ for an integer $\;n,\;$ and $\;\lambda_t, \nu_t \longrightarrow 0.\;$ The following theorem shows that this condition is also sufficient.\\

\begin{Rem}
For $D \neq 0$ the spectral curve of the corresponding constrained Willmore surface is given by (a covering) of the torus $\C/ (2\omega_1\Z \oplus 2\omega_2\Z)$ on which $\wp$  is defined, see \cite[Section 3.5 and Remark 11]{He2}. For $D\longrightarrow 0$ this torus degenerates to a sphere, corresponding to the spectral curve of a homogenous torus. To deform a homogenous torus in direction of constrained Willmore Hopf tori with $D\neq 0$ we have to open the double point on the sphere corresponding to $\mu= n^2-\tfrac{1}{2}.$
\end{Rem}
\begin{The}\label{smoothfamily2d}
Let the Clifford torus be parametrized by the formulas in Theorem \ref{curves} obtained by letting $\;(g_2, g_3, \kappa_0, \rho) \longrightarrow (g_2^\infty, g_3^\infty, 0, \rho^\infty)\;$ given by \eqref{g2g3} and \eqref{rhoinfty2}. Then for every integer $\;n > 1\;$ there is a $2$-parameter family of $n$-lobed and embedded constrained Willmore Hopf tori $\;f^n_{g_2, g_3}\;$ continuously deforming the Clifford torus.  Moreover, the limit Lagrange multiplier of the family at the Clifford torus is $\;\mu^n_\infty = n^2-\tfrac{1}{2}\;$ (and $\;\lambda^n_\infty = 0$).\\
\end{The}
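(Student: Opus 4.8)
The plan is to realize the desired family by applying the implicit function theorem to the system coupling the closing condition \eqref{monodromy} with the normalization of the Weierstrass data, working near the degenerate limit $D = 0$ that represents the Clifford torus. First I would set up the parameter space: the genuine parameters are the lattice invariants $g_2, g_3$ (equivalently, via \eqref{invariants}, the triple $\mu, \lambda, \nu$) together with the translation parameter $\rho$; the initial curvature $\kappa_0$ is then pinned down as a root of $P_4$, and $x_0$ is determined by the constraint $2\wp(x_0) + \wp(\rho) + \tfrac14\kappa_0^2 = -\tfrac14 G$ from Theorem~\ref{curves}. I would fix the winding number $m = 1$ and lobe number $n > 1$ and regard the monodromy $M(g_2, g_3, \rho)$ of \eqref{monodromy} as a function whose vanishing (modulo the prescribed value $i\tfrac{\pi}{2n}$) cuts out the closed Hopf tori. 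At the Clifford limit the data are $(g_2^\infty, g_3^\infty, 0, \rho^\infty)$ with $\mu^n_\infty = n^2 - \tfrac12$, $\lambda^n_\infty = \nu^n_\infty = 0$ as computed in \eqref{g2g3}, \eqref{rhoinfty1}, \eqref{rhoinfty2}, \eqref{rhoinfty3}; here the relevant torus degenerates and $\wp$, $\zeta$ pass to their trigonometric limits $\wp_\infty$, $\zeta_\infty$.

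The core analytic step is to verify that the linearization of the closing equations at this degenerate point is surjective with a two-dimensional kernel, so that the solution set is a smooth $2$-manifold through the Clifford torus. Concretely I would compute the differential of the complex monodromy $M$ together with the two real closing constraints (the two lattice generators must close the curve in space after $n$ periods) with respect to $(g_2, g_3, \rho)$, carefully tracking the limits using the explicit formulas for $\wp_\infty, \zeta_\infty, \omega_1, \eta_1$ and $\rho_\infty$ from \eqref{rhoinfty2}. Because $M$ is real-analytic in all parameters (Remark~\ref{parameterdependence}) and, in the degenerate limit, reduces to an explicit elementary function of $a$ and $\rho_\infty$, the rank computation becomes an explicit, if delicate, computation with trigonometric functions; the key is that $\partial M/\partial\mu$ does not vanish at $\mu^n_\infty = n^2 - \tfrac12$ (which is exactly the ``simple double point'' / transversality statement hinted at in the preceding Remark about opening the double point on the sphere), while the remaining directions are controlled by $\lambda$ and $\nu$. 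Having established transversality, the implicit function theorem produces a $2$-parameter family of closed constrained elastic curves in $S^2$, hence via the Hopf fibration a $2$-parameter family of constrained Willmore Hopf tori; the bookkeeping identity $\wp(\rho) = \tfrac16(\mu - G)$ and the elastic-locus relation $\wp(\rho) = \wp(\omega_3) - \tfrac14 G$ confirm that the limit Lagrange multiplier is $\mu^n_\infty = n^2 - \tfrac12$ with $\lambda^n_\infty = 0$.

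It then remains to argue that the members of this family, for parameters sufficiently close to the Clifford data, have exactly $n$ lobes and are embedded. The lobe count is immediate from continuity: the ratio $m/n$ of winding to lobe number is rational and locally constant along the deformation, so it stays equal to $1/n$; and embeddedness follows because the Clifford torus is embedded, embeddedness is an open condition in $C^\infty_{\mathrm{Imm}}$, and the family depends continuously (indeed smoothly away from the degenerate point, by the non-degeneracy of the generic surface) on the parameters. The main obstacle I anticipate is the transversality computation at the degenerate limit: one must pass to the limit $D \nearrow 0$ in the monodromy equation and its $\rho$- and invariant-derivatives without losing rank, and in particular confirm that the single ``double point'' at $\mu = n^2 - \tfrac12$ is nondegenerate so that exactly one two-dimensional family of surfaces emanates from it — this is precisely the spectral-curve statement that opening that double point is possible, and it is where the special value $\mu^n_\infty = n^2 - \tfrac12$ (equivalently $a = \tfrac{n^2}{12}$) enters decisively.
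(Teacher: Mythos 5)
Your proposal follows essentially the same strategy as the paper: apply the implicit function theorem to the monodromy equation \eqref{monodromy} near the degenerate limit $D=0$, solving for one variable as a function of $(g_2,g_3)$ and using nonvanishing of the relevant derivative as the transversality condition. Two clarifications are warranted. First, the monodromy $M$ is purely imaginary, so \eqref{monodromy} is a \emph{single} real closing condition on the profile curve in $S^2$; the ``two real closing constraints'' you introduce are not there (closure of the Hopf torus follows once the profile curve closes, by Pinkall's construction), and keeping them would make the system overdetermined rather than leaving a two-dimensional solution set. Second, near $D=0$ the map $M$ is differentiable in $\rho$ but only \emph{continuous} in $(g_2,g_3)$; real-analyticity holds only after reparametrizing by $(\lambda,\kappa_0)$ as in Remark~\ref{regularityM}, so the implicit function theorem must be applied in the mixed continuity/differentiability form, exactly as the paper does. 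Finally, the computation you defer as ``delicate but elementary'' is the one the paper carries out explicitly and is the crux of the argument: $\tfrac{\partial M}{\partial\rho}\big|_{\rho=\rho^n_\infty} = \eta_1^\infty + \wp_\infty(\rho^n_\infty)\,\omega_1 = \bigl(\tfrac{n}{4}-\tfrac{1}{4n}\bigr)\pi$, which is nonzero precisely for $n\neq 1$; the paper solves for $\rho(g_2,g_3)$ rather than $\mu$, but the two choices are equivalent via $\wp(\rho)=\tfrac16(\mu-1)$.
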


\begin{proof}
For given real $\;g_2,\;$ $g_3\;$ with $\;D< 0\;$ we can define constrained elastic curves and their corresponding monodromy. The Weierstrass elliptic functions converge for $\;D\nearrow 0\;$ (and real $\;g_2,\;$ $g_3$) uniformly on every compact set, see \cite{KoecherKrieg}.
Thus the monodromy function $\;M(g_2, g_3, \rho), \;$ see \eqref{monodromy}, remains differentiable in $\;\rho\;$ and is continuous in $\;g_2\;$ and $\;g_3\;$ as $\;
D \nearrow 0.$\\

We are interested in the singular case (i.e., $\;D=0$) of the Clifford torus. In this case it is computed above that 

$$g_2^\infty =  \tfrac{n^4}{12} \quad \text{ and }\quad g^\infty_3 = \tfrac{n^6}{216}.$$

We denote by $\;\rho^n_\infty\;$ the unique solution of 

$$\wp_\infty(\rho^n_\infty) = \tfrac{n^2}{6} - \tfrac{1}{4}\; \in \; i\big(0, |\omega_3|\big).$$

From this we obtain

\begin{equation}\label{dotmonodromy}
 \tfrac{\partial }{\partial \rho} M \big(\tfrac{n^4}{12, },\tfrac{n^6}{216}, \rho\big) \mathlarger{|}_{\rho = \rho^n_\infty} = \eta^\infty_1 + \wp_\infty(\rho^n_\infty) \omega_1 =  (\tfrac{n}{4} - \tfrac{1}{4n})\pi 
 \end{equation}
 
which is non zero for $\;n \neq1.$\\

Then we obtain by the implicit function theorem that for every $\;(g_2, g_3) \sim\big(\tfrac{n^4}{12 },\tfrac{n^6}{216}\big)\;$ with $\;D\leq0\;$ an uniquely determined $\;\rho(g_2, g_3)\;$ (continuously depending on $\;g_2
\;$ and $\;g_3$) such that $\;M\big(g_2, g_3, \rho(g_2, g_3)\big) =const,\;$ and $\rho\big(\tfrac{n^4}{12 },\tfrac{n^6}{216}\big)= \rho^n_\infty$ Therefore, we obtain a $2$-parameter family $\;f^n_{g_2, g_3}\;$ of curves depending continuously on $\;g_2\;$ and $\;g_3\;$  given by Theorem \ref{curves} satisfying the closing condition. \\
\end{proof}

\begin{Rem}
Using the same arguments as in the theorem it is possible to show the existence of  a $2$-parameter family of constrained Willmore Hopf cylinders bifurcating at homogenous tori for every $\;a \neq \tfrac{1}{12}$.  For $\;G=1\;$ the value of $\;a = \tfrac{1}{12}\;$ corresponds to M\"obius variations, i.e., $\;n=1$. \\\end{Rem}

\begin{Rem}\label{regularityM}
For $\;D\neq0\;$ the dependence of the curves on the parameters $\;g_2,$ $g_3\;$ is real analytic and thus $\;\rho\big(g_2,g_3\big)\;$ is then also real analytic. In the limit (for $\;D \longrightarrow 0$) the family $\;f^n_{g_2, g_3}\;$ converge smoothly to homogenous tori but at $\;D=0\;$ the regularity is more subtle. The parameters $\;\big(g_2, g_3, \rho\big)\;$ is real analytic in $\;\mu,\; \lambda, \;$ and $\;\kappa_0$. Recall that $\;\kappa\big(x, \mu, \lambda, \kappa_0 \big)\;$ depends real analytically on $\big(\mu, \lambda, \kappa_0\big)$. Thus also the monodromy $\;M\;$ is real analytic in the parameters $\;\big(\mu, \lambda, \kappa_0\big).$ Further, since $\;\wp(\rho) = \tfrac{1}{6}(\mu-1),\;$ we have that $\;\wp\;$ is for $\;\rho \in i\big(0,  |\omega_3|\big)\;$ (and for fixed $\;g_2\;$ and $\;g_3$) a local diffeomorphism from the $\;\rho\;$-line to the $\;\mu\;$-line. Thus in a first step we can change the parameters to $\Big(g_2, g_3, \rho(g_2, g_3)\Big) \longrightarrow \Big(g_2, g_3, \mu(g_2,g_3)\Big)$.\\

For $\;D\leq0\;$ the ``map" $\;\Psi\;$ that associate to

 $$\big(g_2, g_3, \mu\big) \longmapsto \big(\mu, \lambda, \kappa_0\big)$$
 
  is multi-valued in $\;\lambda\;$ and $\;\kappa_0\;$ and behaves rather like a square/fourth order root, see \eqref{invariants}. But $\Psi$ becomes a local diffeomorphism for $\;D \neq0\;$ and $\;\lambda \neq0\;$ by choosing an appropriate branch of the parameters $\lambda$ and $\kappa_0.$ We first observe that 
  
\begin{equation}\label{lambda-lambda}
\kappa\big(\mu, \lambda, \kappa_0\big) = - \kappa\big(\mu, -\lambda, -\kappa_0\big).
\end{equation}

 Therefore, we can choose without loss of generality to $\;\lambda \in \R_-\;$ and thereby fixing an orientation of the curves, where constant solutions have positive curvature. Moreover, recall that a necessary condition for the smooth convergence of the family $f^n_{g_2, g_3}$ to the Clifford torus is that the discriminant $D \longrightarrow 0$ from below, i.e., the family consists of wavelike solutions. In this case the $4$-th order polynomial  
 
$$P_4 = \tfrac{1}{4}\kappa^4 + \big(\mu + \tfrac{G}{2}\big) \kappa^2 + 2 \lambda \kappa + \nu$$

has only two real roots for $D< 0$ (or $1$ root with multiplicity $2$ for $D=0$), both leading to the same solution of the differential equation (up to translation of the curve parameter), see also \cite{He2}. Thus we can choose without loss of generality $\kappa_0$ to be the bigger root of $P_4$. \\

The vector field $\;\tfrac{\del }{\del \mu}\;$ on the parameter space $\;(g_2, g_3, \mu)\;$ is mapped via $\;\Psi\;$ to a vector field $\;\frac{\del }{\del \tilde \mu}\;$ in $\;(\mu, \lambda, \kappa_0)\;$. Because of Equations \eqref{invariants} defining $\;g_2\;$ and $\;g_3$, there exist unique (and smooth) functions $\;\lambda(\mu)\;$ and $\;\kappa_0(\mu)\;$ such that 

$$g_2\big(\mu, \lambda(\mu), \kappa_0(\mu)\big) = const \quad \text{and} \quad g_3\big(\mu, \lambda(\mu), \kappa_0(\mu)\big) = const.$$

Therefore we obtain
$$ \frac{\del M}{ \del \mu}\Big(\mu, \lambda(\mu),\kappa_0(\mu)\Big) = \frac{\del M}{\del \tilde \mu}.$$

Thus by changing the parameters

$$(\mu, \lambda, \kappa_0) \longrightarrow (\tilde \mu, \tilde \lambda, \tilde \kappa_0) = \big(\mu, \lambda + \lambda(\mu), \kappa_0 + \kappa_0(\mu) \big)$$

we obtain that there exists a smooth function $\tilde \mu(\tilde \lambda, \tilde \kappa_0)$ such that 

$$M\big(\tilde \mu(\tilde \lambda, \tilde\kappa_0),\tilde \lambda, \tilde \kappa_0\big) = const.$$

This new smooth family parametrized by $(\tilde \lambda, \tilde \kappa_0)$ multiply covers the old family $f^n_{g_2, g_3}$. 
Note that by \cite[Corollary 3]{He2} there are no solutions with $D>0$ with parameters $( \lambda, \kappa_0, \mu)$ close to  $\big( 0, 0, n^2 - \tfrac{1}{2}\big)$.\\
\end{Rem}

\begin{Def}\label{f^bg2g3}
In the following we rename the parameters $(\tilde \mu, \tilde \lambda, \tilde \kappa_0)$  by $(\mu, \lambda, \kappa_0)$ for simplicity. Thus we will denote the above introduced real analytic \;$2$-parameter family of constrained Willmore tori obtained by $f_{ \lambda, \kappa_0}^n.$  \\\end{Def}
\begin{Rem}
The surfaces $\;f_{\lambda, \kappa_0}^n\;$ is homogenous if and only if $\;D = g_2^3- 27g_3^2 =0.$ These surfaces can be identified within  $\;f^{n}_{\lambda, \kappa_0}$ by varying $\;a\sim n^2 - \tfrac{G}{2}\;$ (and still  prescribe $\;M\big(12a^2, 8a^3, \rho^n_{\infty}(a)\big) = \tfrac{1}{n} \pi i$). Thus for a given $\;\kappa_0 (>0)\;$ there exists a unique $\;\lambda = \lambda_0 \;$ such that the resulting surface  $f^{n}_{\lambda_0, \kappa_0}$ is homogenous (and $\;\kappa_0\;$ is the constant curvature of its profile curve).\\
\end{Rem}
\begin{Rem}
For $\;(g_2, g_3)\sim \big(g^\infty_2, g^\infty_3\big) \in \R^2\;$  such that $\;\big(g_2^\infty\big)^3 -27\big(g_3^\infty\big)^2 =0,\;$ and discriminant $\;D\big(g_2, g_3\big) = \big(g_2)^3 -27\big(g_3\big)^2<0\;$ let $\;c\big(g_2, g_3\big) = c\big(D(g_2, g_3)\big)\;$ be an arbitrary strictly positive and smooth function depending only on the discriminant.  Then by the same computations as above and the implicit function theorem there exist a unique function $\;\rho^{c(D)}(g_2, g_3)\;$  satisfying 

$$M\Big(g_2, g_3, \rho^{c(D)}(g_2, g_3)\Big) = c\Big(D(g_2,g_3)\Big) \pi i.$$

for all $\;(g_2, g_3) \sim \big(g^\infty_2, g^\infty_3\big)\;$ close enough with $\;D(g_2,g_3)< 0.$
Therefore we obtain a $2$-parameter family $\;f^{c(D)}_{g_2,g_3}\;$ of constrained Willmore Hopf cylinders. The family of surfaces is continuous in $\;g_2\;$ and $\;g_3\;$ and real analytic in the corresponding parameters  $\;\lambda\;$ and $\;\kappa_0\;$. The limit Lagrange multiplier $\;\mu_{\infty}\;$ can be computed from $\;g_2^\infty,\;$ $g_3^\infty\;$ and the limit monodromy $\;c(0)$. 
\end{Rem}

The conformal type of a Hopf torus is given by the lattice generated by the vector $\;2\pi \in \C\;$  and the vector $\;\tfrac{1}{2}(A + i L) \in \C,$ where $\;L\;$ is the length of the corresponding profile curve in $\;S^2\;$ and $$\;A= \int_I \kappa(s) ds\;$$ the oriented enclosed volume, see \cite[Proposition 1]{Pinkall}. These quantities can be explicitly computed, see \cite[Theorem 5]{He2}.\\

\begin{Pro}\label{openneigh}
For every lobe number $n \geq 2$ the map $\tilde \bigPi$ which assigns to $(\lambda, \kappa_0)$ the conformal class of the immersions $f^n_{\lambda, \kappa_0},$  see Definition \ref{f^bg2g3}, 
covers an open neighborhood of the rectangular conformal class \;$(0,b)$\; for all \;$b \in \R_{>1}$. 
\end{Pro}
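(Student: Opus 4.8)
The conformal class of a Hopf torus is that of the lattice $\langle 2\pi,\ \tfrac12(A+iL)\rangle$, with $L$ the length and $A=\int_I\kappa\,ds$ the oriented enclosed area of the profile curve in $S^2$ (cf.\ \cite[Proposition 1]{Pinkall}). For the family $f^n_{\lambda,\kappa_0}$ both quantities are given by the explicit formulas of \cite[Theorem 5]{He2} in Weierstrass data, and through the closing condition \eqref{monodromy} together with the reparametrisation of Remark \ref{regularityM} they become functions of $(\lambda,\kappa_0)$ which are smooth up to $D=0$ and real-analytic where $D\neq0$. Writing the conformal class in moduli coordinates $(a,b)$ we obtain a smooth map $\Phi(\lambda,\kappa_0)=\big(a(\lambda,\kappa_0),b(\lambda,\kappa_0)\big)$ representing $\tilde\bigPi$, and the plan is to show that $\Phi$ is a fold along the locus of homogeneous members, whose image fills a neighbourhood of every $(0,b_0)$ with $b_0>1$.

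\emph{Step 1: the homogeneous locus.} By the Remarks after Definition \ref{f^bg2g3}, $f^n_{\lambda,\kappa_0}$ is homogeneous precisely on the zero set $\Lambda_n$ of the discriminant $D=g_2^3-27g_3^2$, a smooth curve regularly parametrised by $\kappa_0>0$ (a unique $\lambda_0(\kappa_0)$ for each $\kappa_0$), along which the profile curve is a round circle of geodesic curvature $\kappa_0$. For such circles $L$ and $A$ are elementary; a direct computation then shows $\Phi(\Lambda_n)\subset\{a=0\}$ and that $\kappa_0\mapsto b\big(\lambda_0(\kappa_0),\kappa_0\big)$ is a diffeomorphism onto $(1,\infty)$. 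Hence every $b_0>1$ is attained at a unique $p_0\in\Lambda_n$ with $\Phi(p_0)=(0,b_0)$ and $\partial_{\kappa_0}b(p_0)\neq0$.

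\emph{Step 2: second-order behaviour at $p_0$.} Choose coordinates $(t,s)$ near $p_0$ with $\Lambda_n=\{t=0\}$ and $s$ along $\Lambda_n$. Since the homogeneous torus $f^{b_0}$ is isothermic, $\delta\bigPi^1\big(f^{b_0}\big)=0$ (the hypothesis of Lemma \ref{mainobservation}; this is exactly why $\partial_{\sqrt a}$ occurs there), so $a(0,s)\equiv0$ and $\partial_t a(0,s)\equiv0$, whence $a(t,s)=\tfrac12\,a_2(s)\,t^2+O(t^3)$ with $a_2(s)=\partial_t^2 a(0,s)$, while $b(t,s)=b(p_0)+(\partial_{\kappa_0}b(p_0))(s-s_0)+O(t,(s-s_0)^2)$. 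The crucial point is that $a_2(s_0)\neq0$, and \emph{this is the step I expect to be the main obstacle}. I would prove it by differentiating the Weierstrass expressions for $A$ and $L$ twice in the transverse direction at $D=0$, using \eqref{invariants}, \eqref{dotmonodromy} and the identities of Remark \ref{regularityM}, and checking non-vanishing directly; a consistency check is that $a_2(s_0)=0$ would force $\partial_{\sqrt a}f_{(a,b_0)}|_{a=0}$ to degenerate, contradicting that it spans the non-trivial kernel direction obtained by opening the double point.

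\emph{Step 3: the fold argument.} Say $a_2>0$ near $s_0$ (consistent with $a\ge0$ describing the moduli space near the rectangular locus; the opposite sign, hence the full two-sided neighbourhood, also follows from the orientation-reversing symmetry $(\lambda,\kappa_0)\mapsto(-\lambda,-\kappa_0)$ of the construction, under which $A\mapsto-A$ and so $a\mapsto-a$). Given $(a^\ast,b^\ast)$ with $0\le a^\ast<\varepsilon$ and $|b^\ast-b_0|<\delta$: for $a^\ast=0$ take $p_0$; for $a^\ast>0$ set $t=\sqrt{a^\ast}\,u$, so that $a=a^\ast$ reads $1=\tfrac12 a_2(s)u^2\big(1+O(\sqrt{a^\ast}\,)\big)$, which for $\varepsilon$ small admits a smooth branch $u=u(s,\sqrt{a^\ast})$ near $\sqrt{2/a_2(s_0)}$; substituting into $b$ yields a function of $s$ equal to $b(p_0)+O(\sqrt{a^\ast}\,)$ with non-zero $s$-derivative, so $b=b^\ast$ has a unique solution $s$ near $s_0$ by the implicit function theorem. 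Therefore the image of $\tilde\bigPi$ contains $\{0\le a<\varepsilon\}\times(b_0-\delta,b_0+\delta)$, an open neighbourhood of $(0,b_0)$; since $b_0>1$ was arbitrary, this proves the Proposition.
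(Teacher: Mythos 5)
Your overall strategy—parametrise the homogeneous locus, show the map is a fold transverse to it, then cover a half-neighbourhood of each rectangular class—is the same as the paper's, and Steps 1 and 3 of your proposal match the paper's reparametrisation and union-of-intervals argument. But you correctly flag the non-degeneracy $a_2(s_0)\neq 0$ as ``the main obstacle'' and then do not actually prove it, which is exactly where the paper's proof does real work. Neither of your two suggested routes closes the gap: the Weierstrass computation is only sketched, and the ``consistency check'' via the kernel direction obtained by opening the double point is circular—that the normal variation $\partial_{\sqrt a}f_{(a,b)}|_{a=0}$ is a non-trivial kernel element is part of what the Main Theorem establishes \emph{using} this proposition, so it cannot be invoked here.

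The paper's key step is different and is a short, conceptual argument you missed. After changing parameters to $\tilde\lambda$ so that $\tilde\lambda=0$ is the homogeneous locus, the paper does \emph{not} work in the $(a,b)$ chart: instead it picks, via a further implicit-function reparametrisation $\overline\kappa_0(\tilde\lambda)$, the slice on which the enclosed area $A$ of the profile curve is held constant, and examines the length $L$ along that slice. At $\tilde\lambda=0$ the profile curve is a round circle in $S^2$, and among curves with fixed enclosed area the circle is the \emph{strict} minimiser of length—the isoperimetric inequality on $S^2$. This immediately yields $\delta L=0$ and $\delta^2 L>0$ for the variation $\hat\varphi$ along the constant-$A$ slice, which is precisely the non-vanishing second derivative you were looking for (and also fixes the sign, so there is no case split or appeal to a symmetry $a\mapsto -a$; the image naturally fills a one-sided neighbourhood in the orbifold chart $\{a\ge 0\}$). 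Real analyticity of the family then rules out the only alternative (locally constant $L$), and the union over $\kappa_0$ of the resulting half-intervals $\{A_{\kappa_0}\}\times[L_{\kappa_0},L_{\kappa_0}+\varepsilon^3_{\kappa_0})$ gives the open neighbourhood, since the rectangular classes form the boundary of the moduli space and $\partial_{\kappa_0}A\neq 0$ at the homogeneous members. In short: the structure of your argument is right, but the isoperimetric observation is the missing idea that makes the key non-degeneracy a one-line fact rather than a computation.
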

\begin{proof}

By construction we have 

$$\tilde \Pi(\lambda, \kappa_0) = \tfrac{1}{2}A\big(\lambda, \kappa_0\big) + i \tfrac{1}{2}L\big(\lambda, \kappa_0\big),$$ 

where $\;L\;$ is the length and $\;A\;$ the oriented enclosed area of the profile curve of \;$\;f^n_{\lambda, \kappa_0}\;$ in the $2$-sphere with constant curvature $\;G = 1.$ 

We first slightly reparametrize the family $\;f^n_{\lambda, \;\kappa_0}$.  For given $\;\kappa_0\;$ there exists a unique $\;\lambda(\kappa_0)\;$ such that the resulting surface $\;f^n_{\lambda(\kappa_0), \;\kappa_0}\;$ is homogenous. Let \;$\;\tilde f^n_{ \tilde \lambda,\kappa_0}:= f^n_{\tilde \lambda + \lambda(\kappa_0),\;\kappa_0}.\;$ Then  we obtain 

$$\partial_{\kappa_0} A\Big(\tilde \lambda+\lambda(\kappa_0),\;  \kappa_0\Big){{|}}_{( \tilde  \lambda,\;\kappa_0) = (0, \kappa_0)} \neq 0 \quad \text{for} \quad \kappa_0 \neq0.$$ 
This implies that for \;$ \kappa_0\neq 0$, there exists a real analytic map $$ \;\overline\kappa_0: (-\varepsilon_{\kappa_0}, \varepsilon_{\kappa_0})\longrightarrow (\kappa_0-\varepsilon_{\kappa_0}^1, \kappa_0+\varepsilon_{\kappa_0}^1)\;$$ such that  

$$\;A\Big(\tilde \lambda+\lambda(\overline\kappa_0(\tilde \lambda)),\;  \overline\kappa_0(\tilde \lambda)\Big) = A\Big(\lambda(\kappa_0),\;  \kappa_0\Big), \;\forall \;\tilde\lambda \in (-\varepsilon_{k_0}, \varepsilon_{k_0}),$$ with \;$\varepsilon_{\kappa_0}$ and \;$\varepsilon_{\kappa_0}^1$\; positive numbers and \;$\kappa(0)=\kappa_0$. 
Since the function 

$$ (-\varepsilon_{\kappa_0}, \varepsilon_{\kappa_0}) \ni \tilde \lambda \longmapsto  L\Big(\tilde \lambda+\lambda(\overline\kappa_0(\tilde\lambda )), \;\overline\kappa_0(\tilde \lambda)\Big)$$ is real analytic, we have that either there exists \;$m\geq 1$\; such that 
\begin{equation}\label{nonconstant}
\frac{d^mL\Big(\tilde \lambda+\lambda(\overline\kappa_0(\tilde \lambda)),\;  \overline\kappa_0(\tilde \lambda)\Big)}{d\tilde \lambda^m}{{|}}_{\tilde  \lambda= 0} \neq 0, \;\;\text{and}\;\;\frac{d^i L\Big(\tilde \lambda+\lambda(\overline\kappa_0(\tilde \lambda)),\;  \overline\kappa_0(\tilde \lambda)\Big)}{d\tilde \lambda^i}{{|}}_{\tilde  \lambda= 0} = 0\;\;\text{for}\;\;1\leq i\leq m-1.
\end{equation}
or
\begin{equation}\label{constant}
\;L\Big(\tilde \lambda+\lambda(\overline\kappa_0(\tilde \lambda)),\;  \overline\kappa_0(\tilde \lambda)\Big)=\;L\Big(\lambda(\kappa_0),  \kappa_0\Big)\;\;\; \forall\; \tilde \lambda \in (-\varepsilon_{\kappa_0}, \varepsilon_{\kappa_0}).
\end{equation}

Consider the normal variation $\;\hat \varphi\;$ of the family $\;\tilde f^n_{\tilde \lambda,  \overline\kappa_0(\tilde \lambda)}\;$ with respect to $\;\tilde \lambda\;$ at a homogenous torus. 
Since the profile curve of a homogenous torus in $S^2$ is a circle and the enclosed area of $\;\tilde f^n_{\tilde \lambda,  \overline \kappa_0(\tilde \lambda)}\;$ is constant, we have by the isoperimetric problem  that $\hat\varphi$ satisfies $\;\delta L\big(f^b\big) (\hat \varphi) = 0\;$ and $\;\delta^2 L\big(f^b\big)(\hat \varphi, \hat \varphi) >0$ which implies \eqref{nonconstant}.

This  gives for $\kappa_0 \neq0$ the existence of a right-neighborhood  \; $[0, \varepsilon_{\kappa_0}^2)$\;  of \;$0$\; such that  the image of \;$[0, \varepsilon_{\kappa_0}^2)$\; under the map 

$$\tilde \lambda \in (-\varepsilon_{\kappa_0}, \varepsilon_{\kappa_0})\longrightarrow  L\Big(\tilde \lambda+\lambda(\overline\kappa_0(\tilde\lambda )), \overline\kappa_0(\tilde \lambda)\Big)$$
is a right -neighborhood of \;$L_{\kappa_0}:=L\Big(\lambda(\kappa_0),  \kappa_0\Big)$\; denoted by\; $[L_{\kappa_0}, L_{\kappa_0}+\varepsilon_{\kappa_0}^3)$\; with \;$0<\varepsilon_{\kappa_0}^2<\varepsilon_{\kappa_0}$\;and \;$\varepsilon_{\kappa_0}^3>0$ \;some small real numbers. Finally, by setting 
$$
A_{\kappa_0}:=
A\Big(\lambda(\kappa_0), \kappa_0\Big),\; \text{and
}\;\;\mathcal{O}:=\bigcup_{\kappa_0\neq 0} \;A_{\kappa_0}\times [L_{\kappa_0}, L_{\kappa_0}+\varepsilon_{\kappa_0}^3),
$$
we have  that \;$\mathcal{O}$\; is an open neighborhood of the rectangular class \;$\tilde \Pi(\lambda(\kappa_0), \kappa_0)$\; for every \;$\kappa_0\neq 0$\; in the moduli space of conformal tori, as the rectangular classes lies on the boundary of the moduli space. 
\end{proof}

\begin{Rem}\label{L= g_3^2}
The Taylor series of $\;L(\tilde \lambda) := L\Big(\tilde f^n_{\tilde \lambda,  \kappa_0(\tilde \lambda)}\Big)\;$ gives:

$$L(\tilde \lambda) = L(0) + \frac{1}{2}\delta^2 L\big(f^b\big)(\hat \varphi, \hat \varphi) \tilde \lambda^2 + \text{higher order terms},$$
i.e., 
Therefore, we obtain $\;\tilde \lambda \sim \sqrt{L(\tilde \lambda) - L(0)}$. 
\end{Rem}

For integers $\;n\;$ the so constructed family $\;\tilde f^n_{\tilde \lambda, \overline\kappa_0(\tilde \lambda)}\;$ does not give appropriate candidates for constrained Willmore minimizers. By Lemma \ref{mainobservation} the limit Lagrange multiplier gives the derivative of the WIllmore energy at the homogeneous torus. It turns out that this limit Lagrange multiplier can be further decreased  when considering the associated family of constrained Willmore Hopf cylinders. 
For the surfaces in the associated family it is harder to compute the limit Lagrange multiplier than in the Hopf case, but the normal variation at a  homogenous torus (parametrized as a $\;(m,k)$-equivariant surface) turns out to be closely related, as we show in Proposition \ref{normalvariation}.  The limit Lagrange multiplier is the value for which the stability operator of  a penalized Willmore stability operator has a non-trivial kernel. As computed in \cite[Section 3]{HelNdi1} (and also in Appendix \ref{stability}), the normal variation uniquely determines the limit Lagrange multiplier. For the constrained Willmore Hopf tori $\;\tilde f^n_{\lambda, \tilde  \kappa_0}\;$ the normal variation at homogenous tori is specified in the following proposition.

\begin{Pro} The normal variation of the ($1$-parameter) sub family $f_t,$ smooth in $t \sim 0, $ of  $f_{\lambda, \kappa_0}^n$ satisfying $\bigPi^2\big( f_t\big) = const.$ with $f_0 = f^n_{\lambda(\kappa_0), \kappa_0}$ is given by 

$$\varphi^b = \sin\big(n\tfrac{x}{rs}\big) \vec{n}^b,$$

(up to invariance) where $f_{\lambda(\kappa_0), \kappa_0}^n$ is $f^b,$  $b = \tfrac{s}{r}$ and $\kappa_0=\tfrac{s^2-r^2}{rs},$ parametrized as a  $(1,1)$-torus

$$f^{b}: \C/ \big(2 \pi i \Z + 2 \pi( i r^2 + rs)\Z\big)\longrightarrow S^3, \quad f^b(x,y) =\begin{pmatrix} r e^{i \big(y+\tfrac{r x}{s}\big)}, &s e^{i \big(y-\tfrac{sx}{r}\big)}\end{pmatrix}$$

and $x$ is the profile curve parameter and $\vec{n}_b$ is the normal vector of $f^b$.\\

\end{Pro}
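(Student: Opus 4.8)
The plan is to compute the normal variation directly from the explicit parametrization of the constrained elastic curves in Theorem~\ref{curves} together with the reparametrization encoded in Definition~\ref{f^bg2g3}. The starting point is that a Hopf torus is determined by its profile curve in $S^2$, and the normal variation $\varphi^b$ of the surface is governed by the variation of the geodesic curvature $\kappa$ of the profile curve. Concretely, for a normal variation $\varphi = u\,\vec n^b$ of a Hopf cylinder the first variation of the curvature functional gives a linearized operator (the Jacobi operator of the elastic curve problem), so I would first write down the linearization of the Euler--Lagrange equation \eqref{elastic}, namely $u'' + \tfrac{3}{2}\kappa^2 u + (\mu + \tfrac{G}{2})u + (\partial_\mu\text{-terms}) = 0$, at the homogeneous solution where $\kappa \equiv \kappa_0 = \tfrac{s^2 - r^2}{rs}$ is constant and $G = 1$.

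Next I would impose the two constraints that pin down the one-parameter subfamily $f_t$: the conformal type in the second coordinate is held fixed, $\bigPi^2(f_t) = \tfrac{1}{2}L = \text{const}$, so $\delta L(f^b)(\varphi^b) = 0$, while the surface is allowed to move in the $\bigPi^1$-direction ($A = \int \kappa\,ds$ changes). At a homogeneous torus the profile curve is a round circle in $S^2$, so the relevant eigenfunctions of the Jacobi operator are the trigonometric functions $\sin(\omega x)$, $\cos(\omega x)$ for the appropriate frequencies $\omega$; the length constraint together with the requirement that the deformed curve still closes up with the correct monodromy (lobe number $n$) selects the frequency. Here the key computation is to identify that frequency: using $\kappa_0 = \tfrac{s^2-r^2}{rs}$, $r^2 + s^2 = 1$, the relation $\mu^n_\infty = n^2 - \tfrac12$ from Theorem~\ref{smoothfamily2d}, and the length of the profile circle, I would show the admissible frequency is exactly $\tfrac{n}{rs}$, yielding $\varphi^b = \sin(n\tfrac{x}{rs})\,\vec n^b$ (the $\cos$ solution being ruled out by the initial-value normalization $\kappa'(0) = 0$ of \eqref{IVP}, equivalently absorbed into the invariance/translation freedom). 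I would also need to check that the $\sin$-mode indeed has vanishing length variation, which follows from $\int_0^{2\pi rs}\sin(n x/(rs))\,dx = 0$, while it has nonzero $A$-variation only at second order, consistent with Remark~\ref{L= g_3^2} and the isoperimetric argument already used in the proof of Proposition~\ref{openneigh}.

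Finally I would translate this curvature-level statement into the stated normal vector field on the torus written as a $(1,1)$-equivariant surface with the indicated lattice $\C/(2\pi i\Z + 2\pi(ir^2 + rs)\Z)$ and parametrization $f^b(x,y) = (r e^{i(y + rx/s)}, s e^{i(y - sx/r)})$: since for Hopf tori the profile curve parameter is (a constant multiple of) the $x$-coordinate and the geometry is $y$-independent, the variation $\delta\kappa = \sin(n x/(rs))$ lifts to $\varphi^b = \sin(n x/(rs))\,\vec n^b$ up to the Möbius/isometry invariance. The main obstacle I anticipate is bookkeeping the several changes of parametrization — from the abstract $(g_2,g_3,\rho)$ coordinates of Theorem~\ref{curves}, through the $(\mu,\lambda,\kappa_0)$ coordinates and the shift in Remark~\ref{regularityM}/Definition~\ref{f^bg2g3}, to the explicit $(1,1)$-parametrization here — while keeping track of exactly which scaling turns the profile-curve arclength derivative into $\partial_x$, so that the frequency comes out as $\tfrac{n}{rs}$ and not some rescaled version; getting this constant right is what makes the identification with the kernel element of the penalized stability operator in \cite[Section~3]{HelNdi1} work.
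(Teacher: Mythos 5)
Your proposal takes a genuinely different route from the paper. The paper argues entirely at the surface level: it invokes Lemma \ref{mainobservation} to conclude that the normal variation of the constrained--Willmore family must lie in the kernel of the penalized stability operator $\delta^2\mathcal W_{\alpha,\beta^b}(f^b)$ (with $\alpha$ the limit Lagrange multiplier), computes that kernel by the Fourier analysis of Lemma \ref{kernvonalpha} at the Clifford torus to get $\sin(2nx),\cos(2nx)$, and then extends to $b\neq 1$ by the mode-orthogonality and uniqueness of $\alpha_b$ from Appendix \ref{stability}. You instead propose a direct, curve-level computation: differentiate the constrained elastic curve equation \eqref{elastic} along the family, solve the resulting linear ODE at the constant-curvature solution, and use the lobe count together with the conformal-type constraint to pin down the frequency and mode. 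In principle this works, since for Hopf tori the Willmore second variation reduces to the elastic Jacobi data, and the direct linearization is more elementary; what the paper's route buys is that it plugs straight into the structural framework (Lemma \ref{mainobservation}, the penalized stability operator, the Fourier-orthogonality argument) that the rest of the paper is built around, so the same machinery handles both $b=1$ and $b\neq 1$ with no extra bookkeeping.

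There are, however, a few concrete imprecisions you would need to fix for your version to go through. First, what you write down and call ``the Jacobi operator of the elastic curve problem'' --- $u'' + \tfrac32\kappa^2 u + (\mu+\tfrac G2)u + \dots = 0$ --- is the linearization of the curvature Euler--Lagrange equation, i.e.\ the equation satisfied by $\delta\kappa$, not the Jacobi operator acting on the normal displacement $u$ of the curve; the latter is fourth order (it factors through $\delta\kappa = u'' + (\kappa^2+G)u$). At a constant-curvature profile both are trigonometric of the same frequency, so the conclusion survives, but the conflation should not be left implicit. Second, you identify $\bigPi^2$ with $\tfrac12 L$, but the Remark preceding the Proposition states explicitly that after the $SL(2,\Z)$-normalization the condition $\dot b = 0$ corresponds to $\dot A = 0$ at the Clifford torus, i.e.\ the roles of $A$ and $L$ are swapped from what you assume; since the $\sin$-mode has $\delta A = \delta L = 0$ at first order this does not change the selected mode, but it would change which Lagrange multiplier you are computing and would bite in any follow-up use of this Proposition. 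Third, you appeal to $\mu^n_\infty = n^2 - \tfrac12$ to extract the frequency $n/(rs)$, but that identity holds only at the Clifford torus ($\kappa_0=0$); for general $b$ the limit Lagrange multiplier is different, and the frequency should instead be read off directly from the $n$-lobe closure condition and the $x$-period $2\pi rs$ of the lattice, which gives $n/(rs)$ with no reference to $\mu$ at all. With these corrections the curve-level argument does produce the stated normal variation, but as you yourself flag, the scaling that converts profile-curve arclength into $\partial_x$ in the given $(1,1)$-parametrization is exactly the delicate constant and needs to be tracked explicitly rather than inferred.
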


\begin{Rem}
For the Clifford torus $f^1$ parametrized as Hopf torus, the conformal type is $\tfrac{1}{2}(A, L) = (\pi, \pi).$ This is equivalent to $\;(a,b) = (0,1)\;$ after a $\;SL(2, \Z)$-transformation $\;T\;$ of the upper half plane. We adjust the projection $\;\bigPi: \text{ Imm }(T^2) \longrightarrow \H\;$  such that $\;\bigPi(f^1) = (0,1).$ The condition that $\;\dot b= 0\;$ corresponds  to $\;\dot A= 0\;$ at $\;(\pi, \pi)\;$ under this transformation. 
\end{Rem}
\begin{proof}
At $\;f^n_{0, 0}\;$ which is the Clifford torus (with intrinsic period $n$), we obtain that $\;\mu = n^2- \tfrac{1}{2} = \tilde \alpha\;$ is the limit Lagrange multiplier. 
Since the map $\;\tilde \bigPi,\;$ defined in Theorem \ref{openneigh}, covers an open neighborhood, we can take a lift of $\;\tfrac{1}{2}(A,L) \sim (\pi, \pi)\;$ under this covering and obtain  a family of surfaces parametrized by its conformal class and verifying the regularity assumptions needed for Lemma \ref{mainobservation}, see Remark \ref{L= g_3^2}. Thus the normal variation of $\;f_t\;$ at $\;t=0\;$ lies in the kernel of $\;\delta^2 \mathcal W_{ \alpha}\big(f^1\big),\;$ with $\;\alpha = 4 \pi^2 \tilde \alpha\;$, and depends only on the curve parameter. The relevant kernel elements of $\;\delta^2 \mathcal W_{\alpha}\big(f^1\big)\;$ can be computed analogously to the proof of Lemma \ref{kernvonalpha} (the $c=1$ case) and lie in the span of the vectors $\;\varphi_1 =  \sin(2nx) \vec{n}^1\;$ and $\;\tilde \varphi_1 = \cos(2nx) \vec{n}^1\;$. 
Thus  up to invariance, we can choose, without loss of generality, $\;\varphi_1\;$ to be the normal variation of  the family $\;f_t\;$ at $\;f_0 = f^n_{0, 0}\;$ the Clifford torus, proving the statement in this case. \\

 For the homogenous tori $\;f^b\;$ the statement holds, since $\;f^n_{\lambda, \kappa_0}\;$ is a smooth $2$-dimensional family of surfaces and the normal variation only depends on the curve parameter: consider the Fourier decomposition of the space of normal variations at $f^b$. The stability computations in Appendix \ref{stability} and \cite{KuwertLorenz} shows that normal variations to different Fourier modes are perpendicular with respect to $\delta^2\mathcal W_{\alpha, \beta^b}.$
Therefore,  there exist for every $\;b\;$ (and uniquely determined $\;\beta^b \sim 0$) a unique $\;\alpha_b \sim (n^2- \tfrac{1}{2}) 4 \pi^2\;$such that $\;\delta^2 \mathcal W_{\alpha_b, \beta^b}\big(f^{b}\big) $ has a kernel, and subspace of the kernel which depends only on the curve parameter is given by $\;\varphi_1^b = \sin(n\tfrac{x}{sr}) \vec{n}^b\;$ up to invariance, where $\;\vec{n}^b\;$ is the normal vector of the homogenous torus $\;f^b$. 

\end{proof}
 \begin{Pro}\label{Hopfnormal}
Let $f^{c(b)}_{(a,b)}$ be a family of constrained Willmore Hopf cylinders with conformal type $(a,b)$ with the same regularity assumptions as in Lemma \ref{mainobservation} such that $f^{c(b)}_{(0,b)}$ is the homogenous torus $f^b$ with
limit monodromy $M= \tfrac{1}{2 c(b)} \pi i$ as $a\longrightarrow 0$. Let $f_t:= f^{c(b(t))}_{(a(t), b(t))}$ be a smooth sub family with $a(0) = 0,$ $b(0) = b$ such that $a_0 := \lim_{t \rightarrow 0}\tfrac{\dot{a}(t)}{\sqrt{a(t)} }$ exists. Then for $b = \tfrac{s}{r}$ there exist $d, e \in \R$ such that the normal variation $\dot f:= \big(\del_{t}f_t|_{t=0}\big) ^\perp$ is given by 

$$\Phi = d^b \sin\big(c(b)\tfrac{x}{rs}\big) \vec{n}^b + e^b.$$

 \end{Pro}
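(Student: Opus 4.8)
The plan is to split the variation $\del_t f_t|_{t=0}$ along the two coordinates of the conformal class and to reduce its $\del_a$--part to the preceding Proposition together with the stability computations of Appendix~\ref{stability}. I would set $F(\tilde a,b):=f^{c(b)}_{(\tilde a^2,b)}$, so that $f_t=F\big(\sqrt{a(t)},b(t)\big)$, $F(0,b)=f^b$, and $\bigPi\big(F(\tilde a,b)\big)=(\tilde a^2,b)$. By the construction of Section~\ref{Hopf} (see in particular Remarks~\ref{regularityM} and \ref{L= g_3^2}) the map $(\tilde a,b)\mapsto F(\tilde a,b)$ has the $C^2$--regularity into $W^{4,2}$ required by Lemma~\ref{mainobservation}, and by Proposition~\ref{openneigh} it covers an open neighbourhood of the rectangular class $(0,b)$; since $f^b$ is a critical point of the functional $\bigPi^1$ while nearby immersions are not, $F$ fits the hypotheses of Lemma~\ref{mainobservation}.

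First I would absorb the reparametrisation in $t$. From $\tfrac{d}{dt}\sqrt{a(t)}=\dot a(t)/\big(2\sqrt{a(t)}\big)$ and $\lim_{t\to 0}\dot a(t)/\sqrt{a(t)}=a_0$ one gets that $t\mapsto\sqrt{a(t)}$ is differentiable at $t=0$ with derivative $a_0/2$ and vanishes there, so by the chain rule
$$\del_t f_t\big|_{t=0}=\tfrac{a_0}{2}\,\del_{\tilde a}F(0,b)+\dot b(0)\,\del_b F(0,b),$$
and projecting onto the normal bundle of $f^b$ yields $\Phi=\tfrac{a_0}{2}\big(\del_{\tilde a}F(0,b)\big)^{\perp}+\dot b(0)\big(\del_b F(0,b)\big)^{\perp}$. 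It then remains to identify the two normal fields on the right.

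For the first one: since the second coordinate of $F$ is $\bigPi^2$, the field $\del_{\tilde a}F(0,b)$ is the variational vector field of a family of constrained Willmore Hopf tori, parametrised by its conformal type, along which $\bigPi^2$ is frozen --- exactly the setting of the preceding Proposition --- so by Lemma~\ref{mainobservation}(3) its normal part lies in $\ker\delta^2\mathcal W_{\alpha_b,\beta^b}(f^b)$. Because the family consists of $(1,1)$--equivariant Hopf cylinders of fixed equivariance type and $f^b$ is equivariant, this normal part depends only on the profile curve parameter $x$, and by the Fourier decomposition of \cite{KuwertLorenz} reproduced in Appendix~\ref{stability} (cf. the proof of Lemma~\ref{kernvonalpha}) the $x$--dependent part of that kernel is spanned by $\sin\big(c(b)\tfrac{x}{rs}\big)\vec{n}^b$ and $\cos\big(c(b)\tfrac{x}{rs}\big)\vec{n}^b$; here one uses that the prescribed limit monodromy $M=\tfrac{1}{2c(b)}\pi i$ fixes the limit Lagrange multiplier to be precisely the penalising constant $\alpha_b$, which is the computation of Theorem~\ref{smoothfamily2d} run with the real parameter $c(b)$ in place of the integer $n$ together with the normalisation $\alpha=4\pi^2\tilde\alpha$ relating the integrable systems and the analytic conventions. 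The cosine field is the $x$--derivative of the sine field, hence the infinitesimal reparametrisation of the profile curve, an invariance direction; so up to invariance $\big(\del_{\tilde a}F(0,b)\big)^{\perp}=d^b\sin\big(c(b)\tfrac{x}{rs}\big)\vec{n}^b$ with $d^b\in\R$. For the second one, $\del_b F(0,b)=\del_b f^b$ is a variation inside the homogenous family; as the profile curve of $f^b$ is a round circle, varying $b$ moves it through concentric circles, so its normal part is, up to invariance, a real multiple of $\vec{n}^b$, which I denote $e^b$. Combining and absorbing $a_0/2$ and $\dot b(0)$ into the constants gives $\Phi=d^b\sin\big(c(b)\tfrac{x}{rs}\big)\vec{n}^b+e^b$.

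\emph{Main obstacle.} The delicate point is the regularity at $a=0$: the parametrisation of the Hopf cylinders by elliptic data degenerates like a square root as the discriminant $D\nearrow 0$ (Remark~\ref{regularityM}), so $\del_{\tilde a}F(0,b)$ only exists as a finite limit after the change of parameters carried out in Section~\ref{Hopf}, and one must check it is compatible with the $\dot a/\sqrt a$--rescaling built into the sub-family $f_t$. The second technical point is the bookkeeping of the two normalisations, i.e. confirming that the limit Lagrange multiplier produced by $M=\tfrac{1}{2c(b)}\pi i$ is the constant $\alpha_b$ for which $\delta^2\mathcal W_{\alpha_b,\beta^b}(f^b)$ acquires the kernel direction $\sin\big(c(b)\tfrac{x}{rs}\big)\vec{n}^b$; this identification is what allows the argument of the preceding Proposition to be transported to the present situation.
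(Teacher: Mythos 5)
Your decomposition of $\del_t f_t|_{t=0}$ via the chain rule into the $\del_{\tilde a}$--part (with coefficient $a_0/2$ from differentiating $\sqrt{a(t)}$) and the $\del_b$--part, and your observation that the latter has constant normal component because the profile circles are concentric, both match the paper's proof exactly. Where the argument diverges --- and where it has a genuine gap --- is in the treatment of the $\del_{\tilde a}F(0,b)$ term. You invoke Lemma~\ref{mainobservation}(3) to place its normal part in $\ker\delta^2\mathcal W_{\alpha_b,\beta^b}(f^b)$ and then read off the frequency $c(b)/(rs)$ from the Fourier computations of Appendix~\ref{stability}. But Lemma~\ref{mainobservation} and the entire Appendix operate on closed tori: $\bigPi$ is a map from immersed tori into Teichm\"uller space, and the Fourier spaces $\mathcal A_{kl}(T^2_b)$ are indexed by integers, so the kernel of $\delta^2\mathcal W_{\alpha,\beta^b}(f^b)$ on the compact torus $T^2_b$ contains only modes with lattice-compatible frequencies. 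For a general $c(b)\in\R$ --- and the proposition really does concern non-compact cylinders, as is emphasised in the statement and exploited in Lemma~\ref{normalvariation}, where $c$ is an arbitrary real constant --- the function $\sin\big(c(b)x/(rs)\big)$ is not doubly periodic on $T^2_b$, so it is not in any such kernel, and the chain Lemma~\ref{mainobservation}(3) $\Rightarrow$ Appendix~\ref{stability} $\Rightarrow$ frequency $c(b)$ breaks down.

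The paper closes this gap by a two-step argument that you are missing: first it establishes the $\dot b=0$ case for honest tori, namely $n$-lobed Hopf tori converging to a $k$-fold covered homogeneous torus, where the monodromy ratio $n/k$ is rational and the normal variation is $\sin\big(\tfrac{nx}{krs}\big)\vec{n}^b$ by exactly the closed-torus argument you describe; and then it invokes continuous dependence of the construction (Theorems~\ref{curves} and \ref{smoothfamily2d}) on the monodromy parameter to extend the formula to arbitrary $c(b)$, where the profile curve need not close. You label the identification of the limit Lagrange multiplier with $\alpha_b$ a ``bookkeeping'' step, but that identification cannot hold for non-integer $c(b)$ on the compact torus: there is no $\alpha$ for which $\sin\big(c(b)x/(rs)\big)\vec{n}^b$ lies in $\ker\delta^2\mathcal W_{\alpha,\beta^b}(f^b)$ when $c(b)$ is not compatible with the lattice. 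Replacing your direct Fourier step with the paper's density-plus-continuity-in-monodromy argument would repair the proof.
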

 \begin{proof}
By the conformal type $(a,b)$ of a cylinder we mean the lattice $\Gamma$ generated by the the vectors $2 \pi$ and $\tfrac{1}{2}A + \tfrac{1}{2}L i $ with length $L = \tfrac{1}{c(b(t))} l$, where $l$ is the length of the arc length parametrized curve after one period of the curvature, and the ``enclosed'' oriented area $A = \tfrac{1}{c(b(t))}\int_0^{l} \kappa ds$ mod $4 \pi$ via Gau\ss-Bonnet. Moreover, we denote by $\dot{()}$ the derivative w.r.t. the parameter $t$ at $t=0.$ 

With the same computation as in the previous proposition it can be shown that for a family of $n$-lobed constrained Willmore Hopf tori converge to the $k$-times covered homogenous torus with $\dot b = 0$ the corresponding normal variation at $t=0$ is given by 
$$\Phi= \sin\big(\tfrac{nx}{k r s}\big)\vec{n}^{b}.$$

Thus by the continuous dependence of the family on the monodromy $c(b(t))$ we obtain that any family of (not necessarily compact) constrained Willmore Hopf cylinders deforming the homogenous torus with monodromy $M= \tfrac{1}{2 c(b)} \pi i$ and $\dot b= 0$ has normal variation at the $a=0$ given by 

$$\Phi = \sin\big(c(b)\tfrac{x}{ r s}\big)\vec{n}^{ b}$$

for a real number $c(b) \in \R.$ If $\dot b \neq0$  and $a(t) \neq 0$ we can split the normal variation into two components:

$$\dot f_t = \tfrac{1}{2}\tfrac{\dot a(t)}{\sqrt{a(t)}} \del_{\sqrt{a}} f^{c(b)}_{(a,b)} + \dot b(t)\del_b f^{c(b)}_{(a,b)}.$$

Because the surfaces $\;f^{c(b)}_{0,b}\;$ are homogenous, we can compute that $\;\big(\del_b f^{c(b)}_{(0,b)}\big)^\perp= \tilde e^b \vec{n}^b\;$ is constant hence we obtain for $t \longrightarrow 0$

$$\big(\dot f\big)^\perp = \big(a_0\sin(c(b) x)   + \dot b\tilde e^b\big) \vec{n}^b$$
proving the statement with $d^b =a_0$ and $e^b  = \dot b\tilde e^b.$ 
 \end{proof}

\section{$(1,2)$-Equivariant surfaces associated to constrained Willmore Hopf cylinders.}\label{12} $\ $\\
The stability computations leading to Lemma \ref{kernvonalpha} indicates that the candidates for constrained Willmore minimizers should have $(1,2)$-symmetry. Thus we construct in the following $2$-parameter families of $(1,2)$-equivariant tori deforming the Clifford torus whose projection into the space of conformal tori cover an open neighborhood. The crucial property of these candidates is that the limit Lagrange multiplier (as the surfaces converge to the Clifford torus) is $\;\alpha^1$, i.e., the maximum $\;\alpha>0\;$ for which $ \;\mathcal W_\alpha\;$ is stable at $\;f^1$.\\
 
For our candidates we use the Ansatz that they lie in the associated family of constrained Willmore Hopf cylinders, i.e., the conformal Hopf differential of the $(1,2)$-surface is given by 

$$4q_{(1,2)} =  \big(\kappa  + \sqrt{G}i\big)e^{2i\theta},$$

for $\;G,\; \theta \in  \R_+\;$ and $\;\kappa: \R \longrightarrow \R\;$ satisfying

\begin{equation}\kappa'' + \tfrac{1}{2}\kappa^3 + (\mu + \tfrac{G}{2}) \kappa + \lambda = 0,
\end{equation}

which is exactly the (constrained) elastic curve equation for curves on a round $S^2$ considered before. The real constants $\;\mu , \;\lambda \in \R\;$ are the Lagrange multipliers of the constrained Willmore Hopf surface. The corresponding Lagrange multipliers for the $(1,2)$-equivariant surface are given by \eqref{Lagrange} and are depending on $\;\;\lambda, \;\mu\;$ and $\;\theta$.\\

\subsection{Seifert fiber space} $\;$\\
We want to restrict to the $(1,2)$-equivariant case in the following, although the constructions below can be used to construct general $(m,n)$-tori (lying in the associated family of Hopf cylinders).
We consider $S^3 \subset \C^2$ with the equivalence relation

$$(z, w) \sim (\tilde z, \tilde w) \Leftrightarrow \exists \;\phi : (\tilde z, \tilde w) = \big(e^{i\phi} z,  e^{2i\phi}w \big).$$

\noindent
We can always choose a unique representative of $\;[(z,w)]\;$ of the form $\;\big(|z|, \tilde w\big) \in S^3,\;$ since for $\;z =|z|e^{i\phi}\;$ we have $\;\big(|z|e^{i\phi},w\big) \sim (|z|, e^{-2 i\phi} w ).\;$ The orbit space $\;S^3{/_\sim}\;$ is a topological $2$-sphere and the fibers of a point $\;(z,w) \in S^3 \subset \C^2\;$ is given by the curve

$$\varphi \longmapsto \big(e^{i\phi}z, e^{2i \phi}w\big).$$

 The trippel $\;F = \big(S^3, S^3/_{\sim}, \pi\big)\;$ defines a Seifert fiber space with one exceptional fiber over $\;[(0,1)],\;$ where $\;\pi: S^3 \longrightarrow S^3/_\sim\;$ is the projection map. 
 In the following we parametrize the regular set of $\;S^3/_\sim\;$ (which is a sphere $\;S^2 \setminus \{$one point$\}$ ) using polar coordinates:
 
$$[0,1] \times [0,1) \longrightarrow S^3{/_\sim} \quad (R, \phi) \longmapsto \big(R, \sqrt{1-R^2}e^{i \phi}\big).$$

 The round metric $\;g_{round}\;$ on $\;S^3\;$ induces a unique metric on (regular set of) the base space, such that $\;\pi\;$ becomes a Riemannian submersion. We denote this metric also by $\;g_{round}$. To be more specific, let $\;X, \;Y\;$ be local vector fields on $\;S^3/_{\sim}\;$ and $\;\hat X, \;\hat Y\;$ be their lifts to $\; TS^3\;$ w.r.t. $\;\pi.$ Then we define the metric on the base space to be
 
$$g_{round}(X,Y) = g_{round}\big((\hat X)^\perp, (\hat Y)^\perp\big),$$

where $\;()^\perp\;$ is the projection orthogonal to the fiber direction. In terms of the coordinates $\;(R, \phi)\;$ the metric on the base space is given by

$$g_{round}= \big(1-R^2\big)dr^2 + \frac{R^2(1-R^2)}{4-3R^2}d\phi^2.$$

With respect to the round metric the length of the fibers of $\;F\;$ at $\;(z, w) \in S^3 \subset \C^2\;$ is given by $\;L = 2 \pi l,\;$ where

$$l_{(z,w)} = \sqrt{|z^2| + 4|w^2|} = \sqrt{4-3R^2}.$$

Dividing the metrics $\;g_{round}\;$ point wise by $\;l^2,$ which is constant along every fiber,  yield new metrics $\;g_{(1,2)}\;$ on $\;S^3\;$ and on the base space $\;S^3/_{\sim},\;$ respectively, given by 

$$g_{(1,2)} = \tfrac{1}{4-3R^2}g_{round}.$$ 

With respect to $\;g_{(1,2)}\;$ all fibers have same length $\;2 \pi$. Let $\;B\;$ be the unit fiber direction with respect to $\;g_{(1,2)}$, then a bundle connection on $\;F\;$ is given by $\;\omega = g_{(1,2)}(B, .)\;$ and its curvature function be straightforwardly computed to be $\;\Omega = \tfrac{4}{\sqrt{4-3R^2}},\;$ see \cite[Proposition 3]{He1}.

Any closed curve 

$$\gamma : S^1 \longrightarrow S^3/_{\sim} \setminus \big\{[(1, 0)], [(0,1)]\big \}, x \longmapsto \big(R(x), \phi(x)\big)$$

 gives rise to an immersed equivariant 
torus by

$$f(x,y) = \big(e^{iy}R(x), e^{2iy} \sqrt{1-R^2(x)}e^{i\phi(x)}\big).$$

The torus is embedded, if and only if the curve is. To obtain a conformal parametrization of the surface, we need the profile curve to be arc length parametrized with respect to $g_{(1,2)}$ and take its horizontal lift  

$$\tilde \gamma(t) = \big(e^{iy(x)}R(x), e^{2iy(x)}\big(1-R(x)^2\big)e^{i\phi(x)}\big)$$

 to $\;S^3\;$ (i.e, $\;\tilde \gamma\;$ satisfy $\;\omega(\tilde \gamma') = 0\;$ and $\;\tilde \gamma\;$ is not necessarily closed). The conformal Hopf differential of the immersion is given by 
 
$$4q_{(1,2)} = \kappa_{(1,2)} + i \Omega.$$

where $\;\kappa_{(1,2)}\;$ is the geodesic curvature of $\;\gamma\;$ w.r.t $\;g_{(1,2)},\;$  see \cite[Proposition 3]{He1}.\\

\subsection{Construction of candidate surfaces}\label{proof}$\ $\\
For given conformal Hopf differential $\;q = e^{2i \theta} \big(\kappa + i \sqrt{G}\big)\;$ lying in the associated family of constrained Willmore Hopf solutions, we want to show how to obtain a constrained Willmore and $(1,2)$-equivariant cylinder and determine the closing conditions. 
Without loss of generality we always choose the $\;G>0\;$ in \eqref{elastic} possibly depending on $\;(\theta, \mu, \lambda, \kappa_0)\;$ such that the resulting $(1,2)$-surface is arc length parametrized with respect to $\;g_{(1,2)}.$
The curvature $\;\kappa\;$ of constrained Willmore Hopf cylinders depends on three parameters $\;\lambda,$ $\mu\;$ and $\;\kappa_0.$ It can be easily computed that the derivative of the equivariance type w.r.t. $\;\theta\;$  at the $(1,2)$-parametrized Clifford torus lying in the associated family of a $(1,1)$-parametrized homogenous torus (for $\;\theta = \arctan(2)- \arctan(b_{Hopf})\;$ and $\;\lambda_\theta \neq 0\;$ and  $\;\mu_\theta\neq 0)\;$ is non zero, thus there exist by implicit function theorem a function $\theta(\lambda, \mu, \kappa_0)\;$ such that $\;f_{\theta(\lambda, \mu, \kappa_0)}\;$ is $(1,2)$-symmetric. \\

Recall that the surfaces we are interested in lie in the associated family of constrained Willmore Hopf cylinders, i.e., the conformal Hopf differential of the $(1,2)$-equivariant surface is given by 

$$4q_{1,2} = 4q_{\theta} = \big(\kappa + i \sqrt{G}\big)e^{2i\theta}.$$

To construct the profile curve $\;\gamma\;$ of the $(1,2)$-equivariant surface in $\;S^3/_\sim,\;$ we first show that $\;\gamma\;$ is already uniquely determined up to isometries by 

$$ \Omega = 4\text{Im}(q_{1,2}) = \big(\cos(2\theta)\kappa +\sin(2\theta)\sqrt{G}\big).$$

On the other hand, recall that

$$\Omega = \tfrac{4}{\sqrt{4-3R^2}},$$ thus

$$R^2= \tfrac{4}{3}- \tfrac{16}{3}\tfrac{1}{\Omega^2}.$$

Further, we normalized our profile curve to be arclength parametrized. The round metric on $\;S^3\;$ induce a metric on $\;S^3/_{\sim}\;$ given by

$$g_{round} = (1-r^2)(dR)^2 + \frac{R^2\big(1-R^2\big)}{4-3R^2}\big(d\phi\big)^2.$$

Thus the arc length condition gives rise to a condition on $\;\phi'\;$ for $\;\gamma = \big(R(s), \phi(s)\big)\;$:

$$\big(1-R^2\big) \big(R' \big)^2 +  \frac{R^2\big(1-R^2\big)}{4-3R^2} \big(\phi'\big)^2 = \big(4-3R^2\big).$$

Therefore

\begin{equation}\label{angle}\phi = \pm \mathlarger{\mathlarger{\int_0^t}}\mathsmaller{\tfrac{1}{R}\sqrt{ \tfrac{\big(4-3R^2\big)^2}{1-R^2} -  (R')^2(4-3R^2) }dt.}
\end{equation}

The choice of initial value for $\;\phi\;$ corresponds to an isometry of the ambient space $\;S^3/_\sim\;$ and the choice of the sign corresponds to the orientation of the curve. Hence without loss of generality we choose $\;\phi(0) = 0\;$ and the ``$+$" sign.

\subsubsection{{\bf{Step 1: Existence of a $2$-parameter family of candidates}}}$\ $\\\label{2dfamily}In order to get closed curves, the necessary condition is that $\;R = \sqrt{\tfrac{4}{3}- \tfrac{16}{3}\tfrac{1}{\Omega^2}}\;$ is periodic. This holds automatically for $\;\Omega = \cos(2\theta) \kappa+\sin(2\theta)\sqrt{G},\;$ where $\;\kappa\;$ is a  wavelike solution of the elastic curve equation  ($D= g_3
^2 - 27g_2^
3<0$, see \cite[Lemma 2]{He2}). Moreover, in this case the closeness of the curve is equivalent to the angle function $\phi$, defined in \eqref{angle},  satisfying 

$$\Phi(g_2, g_3, \mu):= \phi_{(g_2, g_3, \mu)}(L) - \phi_{(g_2, g_3, \mu)}(0) = \tfrac{l}{n}2\pi, \quad \text{ for intergers } l,\;n.$$

We want to show that there exists a two parameter family of closed curves deforming the Clifford torus using implicit function theorem. Recall that  constrained elastic curves in $\;S^2\;$ can be obtained using Weierstrass elliptic functions, see Theorem \ref{curves}. 
By Lemma 2 of \cite{He2} we have that 

\begin{equation}
\label{kappa}
\kappa= \pm \sqrt{-8Re\big(\wp(x+x_0)\big) - \tfrac{2}{3}\big(\mu+\tfrac{G}{2}\Big)},
\end{equation}

solves the elastic curve equation for the parameters given by $\;g_2,\,$ $g_3\;$ and $\;\big(\mu+\tfrac{G}{2}\big),\;$ and with $\;x_0\;$ chosen such that $\;P_4\big(\kappa(0)\big) = 0.$\\

Again the homogenous tori appear in this description as limits of the generic solutions where the lattice $\;\Gamma\;$ (on which the $\;\wp\;$ function is defined) degenerates, i.e., when the discriminant given by $\;D = g_2^3-27g_3^2 \nearrow 0.\;$  Moreover, the limit solution $\;\kappa\;$ for $\;D \longrightarrow 0\;$ is constant. For a smooth family $\;f_t,\;$ $t \in [0, \varepsilon)\;$ of $(1,2)$-equivariant constrained Willmore tori deforming the Clifford torus (i.e., $\;f_0 = f^1\;$ in $(1,2)$-parametrization) we have $\;R_t' \longrightarrow 0\;$ and $\;R_t \longrightarrow \tfrac{1}{\sqrt{2}},\;$ as $\;t \longrightarrow 0.$ Therefore, we aim at solving
 $$\phi(L_\infty) -\phi(0) = 5 L_\infty   = \frac{l}{n}\pi,$$ 
 
 where $\;L_\infty\;$ is the limit period of $\;R_t\;$ as $\;t\longrightarrow 0,\;$ or equivalently the $(1,2)$-profile curve length of $\;f^1\;$ in $\;g_{(1,2)}\;$ metric. Therefore, we can compute the parameters $\;\hat g_2\;$ and $\;\hat g_3\;$ giving the parametrization of the Clifford torus $\;f^1\;$  (or piece of it) from $\;L_{\infty}\;$ and $\;\frac{l}{n}.$\\

\begin{Pro}\label{delmu}
For $\;g_2 \sim \hat g_2\;$ and $\;g_2 \sim \hat g_3\;$ there exist a continuous function $\;\mu_\theta(g_2, g_3)\;$ such that

$$\Phi\big(g_2,  g_3, \mu_\theta(g_2, g_3)\big) \equiv 2 \pi.$$

\end{Pro}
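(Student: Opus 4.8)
The plan is to apply the implicit function theorem to the function $\Phi$ at the degenerate parameter values $(\hat g_2, \hat g_3, \hat\mu)$ corresponding to the Clifford torus, exactly as was done for the monodromy function $M$ in the proof of Theorem~\ref{smoothfamily2d}. First I would record that by construction $\Phi(\hat g_2, \hat g_3, \hat\mu) = 2\pi$: the limit profile curve of a $(1,2)$-equivariant Clifford torus satisfies $\phi(L_\infty) - \phi(0) = 2\pi$ after the correct identification of winding and lobe numbers (this is the normalization $5L_\infty = \tfrac{l}{n}\pi$ recorded just above the statement), so the base point of the implicit function theorem lies on the correct level set. Next I would note the regularity: for $D = g_2^3 - 27g_3^2 < 0$ the Weierstrass functions $\wp,\zeta,\sigma$ depend real-analytically on $(g_2,g_3)$, and by \cite{KoecherKrieg} they converge uniformly on compact sets as $D \nearrow 0$; hence $\Omega = \cos(2\theta)\kappa + \sin(2\theta)\sqrt{G}$, the radius function $R = \sqrt{\tfrac43 - \tfrac{16}{3\Omega^2}}$, and therefore the integrand in \eqref{angle} all depend continuously on $(g_2,g_3)$ and differentiably on $\mu$ up to and including $D = 0$, so $\Phi(g_2,g_3,\mu)$ is continuous in $(g_2,g_3)$ and $C^1$ in $\mu$ in a neighborhood of $(\hat g_2,\hat g_3,\hat\mu)$.

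The crux is then the transversality condition $\partial_\mu \Phi(\hat g_2,\hat g_3,\hat\mu) \neq 0$. To establish it I would differentiate the integral formula \eqref{angle} for $\phi$ with respect to $\mu$, using that in the limit $R_t \to \tfrac{1}{\sqrt2}$, $R_t' \to 0$, so the integrand and its $\mu$-derivative simplify considerably: at the Clifford limit $\Omega$ is the constant $\cos(2\theta)\kappa_0 + \sin(2\theta)\sqrt G$, and the dependence of $\Omega$ on $\mu$ enters through $\kappa_0$ and through $G$ (which is itself pinned by the arc-length normalization). In fact it is cleaner to argue as in Theorem~\ref{smoothfamily2d}: one knows from \eqref{dotmonodromy} that $\partial_\rho M \neq 0$ for $n \neq 1$, and via the local diffeomorphism $\rho \mapsto \mu$ coming from $\wp(\rho) = \tfrac16(\mu - G)$ (Remark~\ref{regularityM}) one can trade the $\rho$-derivative for a $\mu$-derivative; the closing equation $\Phi = 2\pi$ for the $(1,2)$-angle is, up to the fixed rotation angle $\theta$ and the change of equivariance type $(1,1)\to(1,2)$, the same monodromy condition, so its $\mu$-derivative is a nonzero multiple of $\partial_\rho M|_{\rho_\infty^n}$. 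This is where I expect the main obstacle to lie: one must check carefully that the $\theta$-twist and the passage to the $(1,2)$-Seifert description do not make $\partial_\mu\Phi$ vanish accidentally — equivalently, that the chain-rule factor relating $\Phi$ to $M$ is nonzero at the relevant parameter value. The nonvanishing of the $\theta$-derivative of the equivariance type at $\theta = \arctan(2) - \arctan(b_{\mathrm{Hopf}})$ with $\lambda_\theta, \mu_\theta \neq 0$, noted before the statement, is precisely the input that rules this out.

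Granting the transversality, the implicit function theorem yields a neighborhood $U \ni (\hat g_2,\hat g_3)$ and a unique continuous function $\mu_\theta\colon U \to \R$ with $\mu_\theta(\hat g_2,\hat g_3) = \hat\mu$ such that $\Phi(g_2,g_3,\mu_\theta(g_2,g_3)) \equiv 2\pi$, which is the assertion. I would close by remarking that on the open set $D < 0$ the function $\mu_\theta$ inherits real-analyticity from the real-analytic dependence of the Weierstrass data there, matching the regularity statements in Definition~\ref{f^bg2g3} and Remark~\ref{regularityM}, and that the resulting curves are automatically wavelike (so $R$ is periodic) and hence close up to give genuine $(1,2)$-equivariant constrained Willmore tori.
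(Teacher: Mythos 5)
Your overall structure (implicit function theorem at the degenerate Clifford parameters, regularity via uniform convergence of the Weierstrass data as $D\nearrow 0$) matches the paper, and the remark that the base point lies on the level set is also fine. The divergence, and the gap, is in how you establish the transversality condition $\partial_{\mu_\theta}\Phi\neq 0$.

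You first gesture at directly differentiating the integral formula \eqref{angle} — that is in fact exactly what the paper does — but then set it aside in favor of a reduction to $\partial_\rho M\neq 0$ from \eqref{dotmonodromy}, asserting that the $(1,2)$-closing condition $\Phi=2\pi$ is ``up to the fixed rotation angle $\theta$ and the change of equivariance type, the same monodromy condition'' as $M=c\pi i$. This identification is not established and is the crux you would need to prove. The quantity $\Phi$ is the angle increment of the $(1,2)$-profile curve in the Seifert quotient $S^3/_{\sim}$, while $M$ is the Hopf monodromy of the underlying constrained elastic curve in $S^2$; the associated-family twist $q\mapsto qe^{2i\theta}$ generically destroys periodicity, so the two closing conditions are genuinely different functions of $(g_2,g_3,\mu)$, not obviously related by a nonzero chain-rule factor. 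Your own caveat (``this is where I expect the main obstacle to lie'') names this gap, but the appeal to the nonvanishing $\theta$-derivative of the equivariance type does not resolve it: that fact is what lets one define $\theta(\lambda,\mu,\kappa_0)$ so that the surface is $(1,2)$-equivariant at all; it does not by itself give $\partial_{\mu_\theta}\Phi\neq 0$.

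The paper instead carries out the direct computation you passed over. Using $R\equiv\tfrac{1}{\sqrt2}$, $R'\equiv 0$ at the Clifford torus, it reduces $\partial_{\mu_\theta}\Phi$ to a constant multiple of $\int_0^{L_\infty}\partial_{\mu_\theta}R\,ds$, which in turn is a constant multiple of $\partial_{\mu_\theta}\Omega$. It then relates $\partial_{\mu_\theta}\Omega$ to $\partial_\mu\Omega$ by a chain-rule factor that is shown to be positive (using $\partial_\mu\theta>0$, $\lambda_\theta>0$, $\cos(4\theta)>0$), and finally evaluates $\partial_\mu\Omega$ termwise using $\Omega=\sin(2\theta)\kappa+\cos(2\theta)\sqrt{G}$, with $\partial_\mu G=0$ at the Clifford torus, $\partial_\mu\kappa=-\tfrac{2}{3\kappa(0)}<0$, $\kappa_{1,2}<0$, and $\partial_\mu\theta>0$, concluding $\partial_\mu\Omega<0$ and hence $\partial_{\mu_\theta}\Phi\neq 0$. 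To close your argument you would need either to carry out this sign computation or to actually prove (not merely hope) that $\Phi$ and $M$ differ by a reparametrization with nonvanishing $\mu$-derivative at the base point.
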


\begin{Rem}
\;
This implies the existence of a smooth $2$-dimensional family of $(1,2)$-equivariant constrained Willmore tori with extrinsic period $1$ by changing the parameters $\;g_2,\;$ $g_3\;$ and $\;\mu_\theta\;$ to $\;\tilde \lambda, \; \kappa_0\;$ and $\; \mu(\tilde \lambda, \tilde \kappa_0)\;$ as in Remark \ref{regularityM}. The same arguments yield $2$-parameter families of $n$-lobed constrained Willmore tori with $(1,2)$-symmetry.\end{Rem}
\begin{proof}
The map $\Phi$ is differentialble in $\mu_\theta$ and continuous in $g_2$ and $g_3.$
We want to show that $\tfrac{\del}{\del \mu_\theta}  \Phi(g_2, g_3, \mu_{\theta}) \neq 0$ at the simply wrapped Clifford torus. Then the assertion follows by a version of the implicit function theorem as in the proof of  Theorem \ref{smoothfamily2d}. At the Clifford torus we have  $R= \tfrac{1}{\sqrt{2}}$ and $R'=0$, therefore

\begin{equation}
\begin{split}\frac{\del }{\del \mu_\theta}\Phi(g_2, g_3, \mu_\theta)\mathlarger{|}_{R \equiv \tfrac{1}{\sqrt{2}}} = \mathlarger{ \int_0^{L_\infty}}  \frac{\del R}{\del \mu_\theta}\frac{4-5R^2}{R^2 (1-R^2)^{3/2}}\mathlarger{\mathlarger{|}}_{R\equiv \tfrac{1}{\sqrt{2}}} ds
 =  6 \sqrt{2}  \mathlarger{ \int_0^{L_\infty}}  \frac{\del R}{\del \mu_\theta} ds
\end{split}
\end{equation}

Further, we have 

$$\frac{\del R}{\del \mu_\theta} = \frac{5\sqrt{5}}{6} \frac{\del\Omega}{\del \mu_\theta}.$$ 

Thus it remain to show that $\frac{\del \Omega}{\del \mu_\theta} \neq 0.$ Since

$$\mu_\theta= \mu_{\theta}(g_2, g_3, \theta) = \cos\big(4\theta(g_2, g_3, \mu)\big)\mu + \sin\big(4\theta(g_2, g_3, \mu)\big)\lambda,$$ 

we have

$$\frac{\del \Omega}{\del \mu} = \frac{\del \Omega}{\del \mu_\theta} \frac{\del \mu_\theta}{\del \mu  }= \frac{\del \Omega}{\del \mu_\theta} \left(\cos\big(4 \theta(g_2, g_3, \mu)\big) + 4\frac{\del \theta(g_2, g_3, \mu)}{\del \mu}  \big(-\sin(4 \theta)\mu +  \cos(4\theta) \lambda\big)\right).$$

At the Clifford torus we have $\;\frac{\del}{\del \mu} \theta(g_2, g_3, \mu) >0, \;\lambda_\theta>0\;$ and $\;\cos(4 \theta)>0.\;$ Therefore, 

$$\frac{\del \Omega}{\del \mu}= const \frac{\del \Omega}{\del \mu_\theta}.$$

Now because

$$\Omega= \sin(2 \theta)\kappa +  \cos(2\theta)\sqrt{G}$$

we obtain 

$$\frac{\del \Omega}{\del \mu} =  \sin(2 \theta)\frac{\del \kappa}{\del \mu}\big(g_2, g_3, \mu\big) +  \cos(2\theta)\frac{\del}{\del \mu}  \sqrt{G(\theta, g_2, g_3, \mu)}+ \frac{\del \theta}{\del \mu}\big(g_2, g_3, \mu\big) \kappa_{1,2}.$$

At the $(1,2)$-parametrized Clifford torus, we have $\;\frac{\del }{\del \mu}G(\theta, g_2, g_3, \mu)=0,$\footnote{Since the torus remains a segment of the Clifford torus.} and $\; \theta(\mu)= \arctan(2)- \arctan\big(b_{Hopf}(\mu)\big),\;$ where $\;b_{Hopf}\;$ is the conformal type of the associated Hopf cylinder Moreover, $\;\lambda \neq 0,\;$ hence we can choose the sign of $\;\kappa\;$, i.e., $\;\kappa(0) > 0\;$ as before and obtain

$$\frac{\del \kappa}{\del \mu}=- \frac{2}{3\kappa(0)} <0.$$

A computation shows that the geodesic curvature of $\;f^1\;$ w.r.t. the $(1,2)$-metric is negative, further it can be computed that $\;\frac{\del \theta}{\del \mu} >0.\;$ We thus obtain

$$\frac{\del \Omega}{\del \mu} =- \sin(2 \theta) \frac{2}{3\kappa(0)} + \frac{\del \theta}{\del \mu} \kappa_{1,2} < 0.$$

\end{proof}
\subsubsection{{\bf Step 2: Candidates cover an open neighborhood of the moduli space of conformal tori}} \label{2dfamily} $\ $\\
We want to show that the family of surfaces constructed in the previous section cover an open neighborhood of rectangular conformal classes which are near but different from the square conformal class. Recall that the candidate family is parametrized by the parameters $\;(\kappa_0, \lambda).\;$\;  and denoted \;$f^n_{\lambda, \kappa_0}$. Analogously to Proposition \ref{openneigh} we have for every $\;\kappa_0\;$ an unique $\;\lambda(\kappa_0)\in \R_-\;$ such that $\;\big(\kappa_0, \lambda(\kappa_0)\big)\;$ corresponds to a homogenous torus denoted \;$f^n_{\lambda(\kappa_0), \kappa_0}$. Moreover, as in Proposition \ref{openneigh}, we change the parameter $\;\lambda\;$ to $\;\tilde \lambda = \lambda - \lambda(\kappa_0)\;$ in the following.\\

The conformal class of a $(1,2)$-equivariant surface is given by the length $\;L_{1,2}\;$ of its profile curve $\;\gamma\;$ w.r.t.~the $\;g_{(1,2)}\;$ metric and 

$$A_{1,2}= \int_{\mathcal C} \Omega_{1,2} vol_{1,2},$$

 where $\;\mathcal C\;$ is a $2$-chain with \;$\partial \mathcal C = \gamma_{1,2}.\;$ For every $\kappa_0\neq 0$, at the corresponding homogenous tori \;$f^n_{\lambda(\kappa_0), \kappa_0}$, we have by definition of \;$\;L_{1,2}\big(\gamma_{1,2}(\tilde \lambda, \kappa_0)\big)\;$ \;that

$$\frac{\partial L_{1,2}\big(\gamma_{1,2}(\tilde \lambda, \kappa_0)\big)}{\partial \kappa_0}{{|}}_{(\tilde \lambda, \kappa_0) = (0, \kappa_0)} \neq 0,$$
where \;$\gamma_{1,2}(\tilde \lambda, \kappa_0)$\; is the profile curve of \;$f^n_{\tilde \lambda+\lambda(k_0), \;\kappa_0}$. Thus \;there exists a real analytic map 
$$\widetilde\kappa_0: (-\tilde\varepsilon_{\kappa_0}, \tilde\varepsilon_{\kappa_0})\longrightarrow (\kappa_0-\tilde\varepsilon_{\kappa_0}^1, \kappa_0+\tilde\varepsilon_{\kappa_0}^1)$$

such that

 $$\;L_{1,2}\big( \gamma_{1,2}(\tilde \lambda, \kappa_0(\tilde \lambda)\big) =L_{1,2}\big( \gamma_{1,2}(0, \kappa_0(0)\big) ,\quad \forall\;\tilde\lambda\in (-\tilde\varepsilon_{\kappa_0}, \tilde\varepsilon_{\kappa_0})\;$$ with \;$\tilde\varepsilon_{k_0}$ and \;$\tilde \varepsilon_{k_0}^1$\; positive numbers and \;$\kappa(0)=\kappa_0$. Following the proof of Proposition \ref{openneigh}, since the function $\tilde \lambda \in (-\tilde\varepsilon_{\kappa_0}, \tilde\varepsilon_{\kappa_0})\longrightarrow  A_{1, 2}\Big(\gamma_{1, 2}(\tilde \lambda,\;  \widetilde\kappa_0(\tilde \lambda))\Big)$\; is real analytic,  there exists \;$k\geq 1$\; such that 
\begin{equation}\label{nonconstant1}
\frac{d^k A_{1, 2}\Big(\gamma_{1, 2}(\tilde \lambda,\;  \widetilde\kappa_0(\tilde \lambda))\Big)}{d\tilde \lambda^k}{{|}}_{\tilde  \lambda= 0} \neq 0, \;\;\text{and}\;\;\frac{d^i A_{1, 2}\Big(\gamma_{1, 2}(\tilde \lambda,\;  \widetilde\kappa_0(\tilde \lambda))\Big)}{d\tilde \lambda^i}{{|}}_{\tilde  \lambda= 0} = 0\;\;\text{for}\;\;1\leq i\leq k-1.
\end{equation}
or
\begin{equation}\label{constant1}
\;A_{1, 2}\Big(\gamma_{1, 2}(\tilde \lambda,\;  \widetilde\kappa_0(\tilde \lambda))\Big)=\;A_{1, 2}\Big(\gamma_{1, 2}(0,  \;\kappa_0)\Big)\;\;\; \forall\; \tilde \lambda \in (-\varepsilon_{\kappa_0}, \;\varepsilon_{\kappa_0}),
\end{equation}
where \;$\gamma_{1,2}(\tilde \lambda, \kappa_0(\tilde \lambda))$\; is the profile curve of \;$f^n_{\tilde \lambda+\lambda(k_0(\tilde \lambda)), \;\kappa_0}$. Let us show that  \eqref{constant1} can not happen.  Suppose  that \;$\;A_{1,2}\Big( \gamma_{1,2}\big(\tilde \lambda, \kappa_0(\tilde \lambda)\big)\Big)\;$\; is constant for \;$\tilde \lambda\sim 0$\; where $\tilde \lambda=0$\;corresponds to the\;$(1,2)$-parametrized homogenous torus  \;$f^n_{\lambda(\kappa_0), \;\kappa_0}$, namely
 $$
 \;A_{1, 2}\Big(\gamma_{1, 2}(\tilde \lambda,\;  \widetilde\kappa_0(\tilde \lambda))\Big)=\;A_{1, 2}\Big(\gamma_{1, 2}(0,  \;\kappa_0)\Big)\;\;\; \forall\; \tilde \lambda \in (-\varepsilon_{\kappa_0}, \varepsilon_{\kappa_0}).
 $$
 Then the family $\;\gamma_{1,2}\big(\tilde \lambda, \;\widetilde\kappa_0(\tilde \lambda)\big)\;$ would give rise to a family of constrained Willmore surfaces in the conformal class of the homogeneous tori \;$f^n_{\lambda(\kappa_0), \;\kappa_0}$\; deforming the homogenous tori \;$f^n_{\lambda(\kappa_0), \kappa_0}$\; but different up to invariance to them. Thus we obtain a non Moebius equivalent family of constrained Willmore surfaces with the same conformal class and same Willmore energy as the homogenous torus corresponding to the parameter \;$\kappa_0$. So for $\kappa_0\neq 0$ and $\kappa_0\sim 0$, we get a contradiction to the fact that homogenous tori are the unique constrained Willmore minimizers up to Moebius invariance, see \cite{NdiayeSchaetzle1}.  Hence, we have \eqref{nonconstant1} must hold implying the existence of a right-neighborhood \;$[0, \tilde\varepsilon_{\kappa_0}^2)$\; of \;$0$\; such that the image of \;$[0, \varepsilon_{\kappa_0}^2)$\; under the map \;$\tilde \lambda \in (-\varepsilon_{\kappa_0}, \varepsilon_{\kappa_0})\longrightarrow  A_{1, 2}\Big(\gamma_{1, 2}(\tilde \lambda,\; \kappa_0(\tilde \lambda))\Big)$\;is a right -neighborhood of \;$A_{1, 2}^{\kappa_0}:=A_{1, 2}\Big(\gamma_{1, 2}(0,  \kappa_0)\Big)$\; that we denote by \;$[A_{1, 2}^{\kappa_0}, \;A_{1, 2}^{\kappa_0}+\tilde\varepsilon_{\kappa_0}^3)$\; with \;$0<\tilde\varepsilon_{\kappa_0}^2<\tilde\varepsilon_{\kappa_0}$\;and \;$\tilde\varepsilon_{\kappa_0}^3>0$\; some small real numbers. Finally, setting 
$$
L_{1, 2}^{\kappa_0}=
L_{1, 2}\Big(\gamma_{1, 2}(0, \kappa_0)\Big),\; \text{and
}\;\;\mathcal{O}:=\bigcup_{\kappa_0\neq 0, \;\kappa_0\sim 0} \; [A_{1, 2}^{\kappa_0}, \;A_{1, 2}^{\kappa_0}+\tilde \varepsilon_{\kappa_0}^3)\times {L_{1, 2}^{\kappa_0}},
$$
we have that $\mathcal{O}$ 
is an open neighborhood of the rectangular class \;$(A_{1, 2}^{\kappa_0}, \;L_{1, 2}^{\kappa_0})$\; in the moduli space of tori.

\begin{Rem}Since the above constructed family of candidates cover an open neighborhood of 

$$(0, b_0) \; \in \; \big\{(a,b) \in \R^2 \; \mathlarger{|} a \geq 0, b\geq \sqrt{1-a^2} \big\} \quad b_0 \sim 1, b_0 \neq 1$$

 we can parametrize it by its conformal type $\;(a,b)\;$, $b\simeq 1, b\neq 1$, and $a\sim_b 0^+$\; instead and denote it by $\;f_{(a,b)}\;$ in the following. As before in the Hopf case, due to the degeneracy of the homogenous, the family $\;f_{(a,b)}\;$ is not smooth in $\;a\;$ but rather in $\;\sqrt{a},\;$ see Remark \ref{regularityM}.
\end{Rem}

\subsubsection{{\bf Step 3: Identification of the normal variation at $f^b$}} \label{convergence}$\ $\\
The family of constrained Willmore tori $\;f_{(a,b)}\;$ constructed here lie in the associated family of constrained  Willmore Hopf cylinders.  Proposition \ref{Hopfnormal} gives that the normal part of the variation $\;\dot f^{Hopf}:= \Big(\del_{\tilde \lambda} \tilde f^{Hopf}_{\big(\tilde \lambda,\overline\kappa_0(\tilde \lambda)\big)}\Big)^\perp\;$ is given by $\;\sin(c x) \vec{n}_{1,1}^{\tilde b}\;$ for some real constant $\;c.\;$ The following Lemma relate the normal variation of $f_{(a,b)}$ to the normal variation of $\;f^{Hopf}_{(\tilde \lambda,\overline \kappa_0(\tilde \lambda))}$\; at \;$f^b$. 

\begin{Lem}\label{normalvariation}
For $\;b \sim 1 \;$ fixed  and $\;a \sim_b 0^+,\;$ let $\;f_{(a,b)}\;$ be the family of $(1,2)$-equivariant constrained Willmore tori of conformal type $\;(a,b)\;$ lying in the associated family of certain constrained Willmore Hopf cylinders $\;f^{Hopf}_{(\lambda,\kappa_0)}.\;$ Then there ist a $\tilde c \in \R_{\geq 0}$ with

$$<\partial_{\sqrt{a}} f_{(a,b)}\mathlarger{|}_{a=0}, \vec{n}_{1,2}^{b} > \;= \;const \cdot \sin (\tilde c x) \vec{n}_{1,2}^{b},$$

where $\;\vec{n}_{1,2}^{b}\;$ is the normal vector of the homogenous torus with conformal class $(0,b)$ parametrized as a $(1,2)$-equivariant surface as in \eqref{12fb} with conformal
 Hopf differential 
 
 $$\;q_{1,2} = q_{1,1} e^{2 i \theta}.$$\\

\end{Lem}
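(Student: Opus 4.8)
The plan is to exploit the relationship between a $(1,2)$-equivariant surface in the associated family and its underlying constrained Willmore Hopf cylinder, for which Proposition \ref{Hopfnormal} already identifies the normal variation. First I would record that by construction the conformal Hopf differential of $f_{(a,b)}$ equals $q_{1,2} = q_\theta = q_{1,1}e^{2i\theta}$, where $q_{1,1}$ is the conformal Hopf differential of a constrained Willmore Hopf cylinder $f^{Hopf}_{(\lambda,\kappa_0)}$ and $\theta = \theta(g_2,g_3,\mu)$ is the (implicitly defined) rotation angle making the surface $(1,2)$-symmetric, as in Section \ref{12}. Since a non-degenerate surface is determined up to Möbius transformations by $q$ alone (Theorem 3.1 of \cite{BuPP}, as recalled after Definition \ref{associatedfamily}), the whole deformation is governed by the scalar function $\Omega = 4\,\Im(q_{1,2}) = \cos(2\theta)\kappa + \sin(2\theta)\sqrt{G}$, and $\gamma_{1,2}$ is reconstructed from $\Omega$ via $R^2 = \tfrac43 - \tfrac{16}{3}\Omega^{-2}$ and the angle formula \eqref{angle}. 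So the normal variation $\partial_{\sqrt a} f_{(a,b)}|_{a=0}$, paired with $\vec n_{1,2}^b$, is a (smooth, nonzero-first-derivative in $\sqrt a$ by Remark \ref{regularityM}) function of the profile-curve parameter $x$ alone, by Remark \ref{kurvenq}.

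The key step is then to transfer the Hopf-cylinder result to the $(1,2)$ picture. From Proposition \ref{Hopfnormal}, for the associated Hopf family the normal variation at $f^b$ is $d^b\sin(c(b)\tfrac{x}{rs})\vec n^b + e^b$ along the $\tilde\lambda$-direction; the crucial input from the $(1,2)$ construction is that the rotation angle $\theta$ and the metric parameter $G$ do \emph{not} vary to first order at the homogeneous torus — indeed $\partial_\mu G = 0$ there (the $(1,2)$-surface stays a segment of the Clifford/homogeneous torus), as used in the proof of Proposition \ref{delmu}, and the constraint $\bigPi^2(f_{(a,b)})$ is held fixed so $\dot b = 0$ in the relevant subfamily. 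Consequently the first-order variation of $\Omega$ is $\delta\Omega = \cos(2\theta)\,\delta\kappa$, i.e. it is a nonzero constant multiple of the Hopf curvature variation $\delta\kappa$, which by the analysis of \eqref{kappa}–\eqref{elastic} and Proposition \ref{Hopfnormal} is itself $\propto \sin(\tilde c\,x)$ for the appropriate frequency $\tilde c$. Linearizing $R^2 = \tfrac43 - \tfrac{16}{3}\Omega^{-2}$ at $R\equiv \tfrac1{\sqrt2}$, $R'\equiv 0$ shows $\delta R$ is again a constant multiple of $\delta\Omega$, hence $\propto \sin(\tilde c\,x)$; linearizing \eqref{angle} and the embedding map $(R,\phi,y)\mapsto$ the point in $S^3$ then produces the normal component, and since the base surface is homogeneous the remaining contributions (from $\delta\phi$ and the horizontal-lift correction $\delta y$) either vanish after projecting onto $\vec n_{1,2}^b$ or contribute only the same single Fourier mode, by the same equivariance/Fourier-orthogonality argument used for $f^b$ in the proof of the previous Proposition and in Appendix \ref{stability}. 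Putting $\tilde c \geq 0$ to be the frequency produced by $\delta\kappa$ (with $\tilde c = 0$ only in the excluded Möbius case), one gets $\langle \partial_{\sqrt a} f_{(a,b)}|_{a=0}, \vec n_{1,2}^b\rangle = \mathrm{const}\cdot \sin(\tilde c\,x)\,\vec n_{1,2}^b$.

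I expect the main obstacle to be the bookkeeping in the last step: one must check that no additional Fourier modes enter when passing from $\delta\Omega$ through the reconstruction $\Omega \mapsto (R,\phi) \mapsto f$, i.e. that the linearized horizontal-lift equation and the linearized angle integral \eqref{angle}, evaluated at the homogeneous background, do not mix frequencies into the normal component. This is where the homogeneity of $f^b$ (constant $R$, $\Omega$, and the $S^1$-symmetry in $y$) is essential, together with the Fourier-orthogonality of the second variation $\delta^2\mathcal W_{\alpha,\beta^b}$ established in Appendix \ref{stability} and \cite{KuwertLorenz}; these force the normal variation, which a priori depends only on $x$, to be a single mode in the kernel of $\delta^2\mathcal W_{\alpha^b,\beta^b}$, and the kernel computation of Lemma \ref{kernvonalpha} pins down that mode to be $\sin(\tilde c\,x)$. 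A secondary subtlety is the regularity at $a=0$: one works with $\sqrt a$ rather than $a$ as in Remark \ref{regularityM}, and must invoke $\partial_{\tilde\lambda}A_{1,2}\neq 0$ (or the analogue of \eqref{nonconstant1}) to guarantee the variation is nonzero so that $\tilde c$ is well-defined.
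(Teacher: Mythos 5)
Your high-level strategy — reduce to the profile curve, import the Hopf normal variation from Proposition~\ref{Hopfnormal}, and pass through $\Omega=4\,\Im(q_{1,2})$ — matches the paper's proof. But there is a genuine gap at the decisive step, the elimination of the constant Fourier mode in $\langle\partial_{\sqrt a}f_{(a,b)}|_{a=0},\vec n_{1,2}^b\rangle$.

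You assert that ``the rotation angle $\theta$ and the metric parameter $G$ do not vary to first order at the homogeneous torus,'' deduce $\delta\Omega=\cos(2\theta)\,\delta\kappa$, and then treat the constant part as absent. That claim is not justified and is in tension with the paper's own computations: in the proof of Proposition~\ref{delmu} it is stated explicitly that $\partial_\mu\theta>0$, and the fact $\partial_\mu G=0$ there is a statement at fixed $(g_2,g_3)$ for the Clifford torus, not a statement that $\dot\theta$ vanishes along the subfamily $t\mapsto f_{(t^2,b)}$. Holding $b$ fixed constrains the conformal type $(a,b)$ of the $(1,2)$-torus, not the Hopf conformal type $b_{\text{Hopf}}$, on which $\theta=\arctan(2)-\arctan(b_{\text{Hopf}})$ actually depends; so $\dot b=0$ does not yield $\dot\theta=0$. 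Indeed the vanishing $\dot\theta=0$ for this subfamily is only proved later, in Lemma~\ref{Ldottheta}, and that proof uses Proposition~\ref{Convergence} which in turn builds on the present Lemma — invoking it here would be circular. Moreover, even granting $\dot\theta=0$, the Hopf variation from Proposition~\ref{Hopfnormal} still has a constant part $e^b$, which does not vanish unless the Hopf conformal type is also stationary; your argument does not show that.

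The paper's route avoids this: it keeps $\dot\theta$ explicit, writes $\dot R=\mathrm{const}\cdot\dot\Omega=\mathrm{const}\cdot\big(\sin(2\theta)\dot\kappa+2\dot\theta\,\hat\kappa_{1,2}\big)$ with $\dot\kappa=(-c^2+\kappa_0^2+1)\sin(cx)+(\kappa_0^2+1)d$ obtained from the Hopf variation via the linearized curve equation, and then uses the conformal-class constraint $\dot A_{1,2}=0$ (which forces $\int_{\gamma_{1,2}} g_{1,2}(\dot\gamma_{1,2},\vec n_{1,2}^b)=0$) to derive the cancellation identity $\sin(2\theta)(\kappa_0^2+1)d=-2\dot\theta\,\hat\kappa_{1,2}$. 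This is precisely what kills the constant term without any a priori assumption on $\dot\theta$. Your fallback appeal to the Fourier-orthogonality/kernel argument of Appendix~\ref{stability} is also shaky: to conclude the variation lies in the kernel (rather than merely being a null vector of a possibly indefinite form) you would first need $\delta^2\mathcal W_{\tilde\alpha^b,\beta^b}\geq 0$, i.e.\ $\tilde\alpha^b\leq\alpha^b$, which the paper establishes only afterwards. So the missing idea is exactly the use of $\dot A_{1,2}=0$ as the mechanism for annihilating the constant mode; without it the proposal does not close.
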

\begin{Rem}
Note that both families of surfaces are homogenous for $\;a=0\;$, i.e., $\;a=0\;$ implies $\;\tilde \lambda = 0$\; or equivalently \;$\lambda=\lambda(\kappa_0)$.
\end{Rem}

\begin{proof}
Since we are considering equivariant tori and variations of these preserving the equivariance type, we can restrict ourselves without loss of generality to the variation of the the underlying profile curves. Recall that the conformal type of the surfaces are given by 

$$A_{1,2}= \int_{\mathcal C} \Omega_{1,2} vol_{1,2} \quad \text{and} \quad L_{1,2} = \int_{\gamma_{1,2}} ds_{1,2}.$$

Let $\;4q_{1,1}^t = \kappa_t + i\;$ be the conformal Hopf differential of the Hopf cylinders $\;f^{Hopf}_{\lambda(t), \kappa_0(t)}.\;$ Then the conformal Hopf differential of the associated $(1,2)$-tori are given by 

$$4q_{1,2} = \big(\cos(2\theta_t)\kappa_t -  \sin(2\theta_t)\big) + i \big(\sin(2\theta_t) \kappa_t +\cos(\theta_t)\big).$$

Note that the formula for the associated family given in Section \ref{Tr} is the same one as given in \eqref{12fb} up to scaling $\;T^2_b\;$ by a constant factor (constant in $\;x\;$ and $\;y$).

Since $\;b = const\;$ and the homogenous tori are degenerate, we have $$\;\dot A_{1,2}= 0 = \dot L_{1,2}.\;$$
at $a=0.$ This gives rise to the following
 
\begin{equation}\label{dotA}
\dot A_{1,2} = \int_{\gamma_{1,2}}  \Omega_{1,2} vol_{1,2} \big(\dot \gamma_{1,2}, \gamma'\big) = \int_{\gamma_{1,2}} \Omega_{1,2} g_{(1,2)} \big(\dot \gamma_{1,2}, \vec{n}_{1,2}^b\big) = 0,
\end{equation}
\begin{center}and\end{center}
\begin{equation}
\dot L_{1,2} = \int_{\gamma_{1,2}} g_{1,2} \big(\dot \gamma_{1,2}', \gamma'\big) ds= -\int_{\gamma_{1,2}} \hat \kappa_{1,2} g_{1,2} \big(\dot \gamma_{1,2}, \vec{n}_{1,2}^b\big) = 0.
\end{equation}

 Since for $\;a = 0\;$ the family $\;f_{(0,b)}\;$ are homogenous, the corresponding Hopf cylinders are homogenous too. Thus there exist a real numbers $\;c\;$ and $\;d\;$ such that  normal variation $\;\dot f^{Hopf}:=\big(\del_\lambda f^{Hopf}_{\lambda, \kappa_0}\big)^\perp\;$ of the Hopf cylinders $\;f^{Hopf}_{\lambda, \kappa_0}\;$ are given by Proposition \ref{Hopfnormal} to be
 
 $$ <\dot f^{Hopf}, \vec{n}_{1,1}^{\tilde b}>\;=\; d  + \sin (c x).$$
 
 From this we obtain that 
 
 $$<\dot f^{Hopf}, \vec{n}_{1,1}^{\tilde b}>'' \;=\; <\dot \gamma_{1,1}, \vec{n}_{1,1}^{\tilde b}>'' \;=\; -c^2 <\dot f^{Hopf}, \vec{n}_{1,1}^{\tilde b}> + c^2 d.$$
 
Since all Hopf tori are arclength parametrized, another  computation shows that 

 $$<\dot \gamma_{1,1}, \vec{n}_{1,1}^{\tilde b}>'' \;=\; \dot \kappa + ( -1-\kappa_0^2) <\dot \gamma_{1,1}, \vec{n}_{1,1}^{\tilde b}>,$$
 
 hence
 
 \begin{equation} \label{dotkappa1}\dot \kappa = \big(-c^2 + \kappa_0^2 +1\big)\sin(c x) + \big(\kappa^2_0 +1\big) d.
 \end{equation}
 
 For $\;c^2= \kappa_0^2 + 1\;$ we obtain that the variation has extrinsic period $1$  and the resulting surfaces (including the associated $(1,2)$-equivariant ones) are all homogenous.  At the $(1,2)$-parametrized homogenous torus, we have further that 
 
 $$\;\vec{n}_{1,2}^b|_{a=0} = \tfrac{\del }{\del R}\;$$ and thus

$$g_{1,2}\big(\dot \gamma_{1,2}, \vec{n}_{1,2}^b\big) = \dot R = const \cdot \dot \Omega = const \cdot \big(\sin(2\theta)\dot \kappa + 2 \dot \theta \hat \kappa_{1,2}\big),$$

where 

$$\;\hat \kappa_{1,2}:= \cos(2\theta) \kappa_0 -\sin(2\theta)\;$$

 for $\;\tilde \lambda=0.\;$ Since $\;\dot A_{1,2} = 0\;$  we obtain that 
 
 $$\;\int_{\gamma_{1,2}} g_{(1,2)}\big(\dot \gamma_{1,2}, \vec{n}_{1,2}^b\big) = 0\;$$ and therefore 

\begin{equation}\label{dotRkappa}
\begin{split}
\sin(2 \theta)\big(\kappa_0^2+1\big)&d = - 2\dot \theta \hat \kappa_{1,2}\\
\;\\
&\text{and}\\
\;
\quad \dot R = const \cdot \sin(&2\theta) \big(c^2 - \kappa_0^2 -1\big) \sin(c x).
\end{split}
\end{equation}

This proves the Lemma.

\end{proof}
The normal variation of $f_{(a,b)}$ at $f^b$ depends only on the profile curve parameter. Moreover, the intrinsic period of the normal variation is at most the intrinsic period of profile curve of $f_{(a,b)}$. When $f^b$ is parametrized as a $(1,2)$-equivariant torus \eqref{12fb} we therefore obtain
\begin{Pro}\label{Convergence}
A family of $\;n$-lobed $(1,2)$-equivariant constrained Willmore tori $\;\hat f^n_{\lambda(t), \kappa_0(t)}\;$ deforming the homogenous torus given by \eqref{12fb} with $\;\dot A_{1,2} = \dot L_{1,2} = 0\;$ has normal variation given by
$$<\partial_{\sqrt{a}} f_{(a,b)}\mathlarger{|}_{a=0}, \vec{n}_{1,2}^{b} > \;=  \sin \left (\frac{r^2  + 4s^2}{2 rs} n x \right),$$

up to invariance. 

\end{Pro}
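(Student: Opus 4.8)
The plan is to combine the \emph{shape} of the normal variation — already pinned down by Lemma~\ref{normalvariation} — with an explicit period computation on the homogenous torus \eqref{12fb}. First I would apply Lemma~\ref{normalvariation} to the $n$-lobed family $\hat f^n_{\lambda(t),\kappa_0(t)}$: since $\dot A_{1,2}=\dot L_{1,2}=0$ the conformal type is stationary, so the hypotheses of the lemma are met with $b$ fixed and the limit torus degenerate, and one obtains $\langle\partial_{\sqrt a}f_{(a,b)}|_{a=0},\vec{n}_{1,2}^{b}\rangle=\mathrm{const}\cdot\sin(\tilde c\,x)$ for some $\tilde c\ge 0$, where $x$ is the $g_{(1,2)}$-arc-length parameter of the profile curve. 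Everything then reduces to identifying $\tilde c$.

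Next I would compute the period of the profile curve of \eqref{12fb}. Projecting $f^b$ to the base of the Seifert fibration $F$, its profile curve is the latitude circle $R\equiv r$, along which $\Omega\equiv\frac{4}{\sqrt{4-3r^2}}=\frac{4}{\sqrt{r^2+4s^2}}$ using $r^2+s^2=1$; from the explicit form of $g_{(1,2)}$ on the base recorded in Section~\ref{12} this circle has $g_{(1,2)}$-length $\ell_0=\frac{2\pi rs}{r^2+4s^2}$, which is exactly the period of the profile curve of \eqref{12fb} in the parameter $x$. Since, by the remark preceding the proposition, the normal variation depends only on $x$ and has intrinsic period at most that of the profile curve of $f_{(a,b)}$, the number $\tilde c$ must be a (half-)integer multiple of $\pi/\ell_0$; the $n$-lobed structure of the family — equivalently, the fact that $f_{(a,b)}$ is obtained by opening a single double point, for which the Hopf identification of the Proposition preceding Proposition~\ref{Hopfnormal} prescribes exactly $n$ oscillations of the profile-curve variation per closed period — fixes this multiplicity as $\tilde c\,\ell_0=n\pi$. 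Hence $\tilde c=\frac{n\pi}{\ell_0}=\frac{r^2+4s^2}{2rs}\,n$, which is the assertion.

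The main obstacle is this last step: transporting the ``$n$-lobed'' count through the reparametrization linking the $S^2$-arc-length parameter of the underlying constrained Willmore Hopf profile curve, the rotation angle $\theta$ of the associated family (together with the rescaling of $G$ that enforces the $g_{(1,2)}$-arc-length normalization), and the parameter $x$, and checking that the weight-$2$ part of the Seifert structure of $F$ yields precisely the relation $\tilde c\,\ell_0=n\pi$ rather than $\tilde c\,\ell_0=2\pi n$ or a proper divisor of it. By contrast, the metric computation giving $\ell_0$ and the appeal to Lemma~\ref{normalvariation} are routine, so the proof is essentially bookkeeping once the correct multiplicity is justified.
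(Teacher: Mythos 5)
Your overall strategy — apply Lemma~\ref{normalvariation} to get the form $\sin(\tilde c\,x)$, then fix $\tilde c$ by matching periods of the profile circle in the $(1,2)$-Seifert base — is exactly the reasoning the paper sketches (the paper itself offers no proof, only the one-sentence remark preceding the Proposition, so in effect you are supplying the omitted details). Your computation of the $g_{(1,2)}$-length of the latitude circle $R\equiv r$, using $\Omega=4/\sqrt{4-3R^2}$, $4-3r^2=r^2+4s^2$, and the explicit base metric, giving $\ell_0=\tfrac{2\pi rs}{r^2+4s^2}$, is correct.

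The gap is precisely the step you flag as ``the main obstacle,'' and your own wording makes the problem visible: you argue that the Hopf picture ``prescribes exactly $n$ oscillations of the profile-curve variation per closed period'' and then conclude $\tilde c\,\ell_0 = n\pi$. Those two statements are inconsistent — $n$ full oscillations of $\sin$ over a length $\ell_0$ means $\tilde c\,\ell_0 = 2\pi n$, not $n\pi$. Moreover, for odd $n$ the relation $\tilde c\,\ell_0=n\pi$ would make $\sin(\tilde c\,x)$ fail to be periodic along the profile circle, so it cannot literally be the variation of a closed torus. What actually has to be checked, and is not, is how the lobe count $n$ of the underlying Hopf profile curve translates into the oscillation count of $\Omega_{1,2}=\cos(2\theta)\kappa+\sin(2\theta)\sqrt G$ over one revolution of the latitude circle in $S^3/{\sim}$ after the associated-family rotation and the $g_{(1,2)}$-arclength renormalization; this is exactly where a factor of $2$ (or a reinterpretation of $n$) has to come from. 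Compare with the Hopf analogue: there the variation $\sin(n x/rs)$ over the $x$-period $2\pi rs$ does $n$ full oscillations, i.e. $\tilde c\,\ell_0 = 2\pi n$, whereas here you need $n\pi$ to recover the stated formula. Until that bookkeeping is carried out — tracing the intrinsic period of $R$ (equivalently of $\Omega$) through the $\theta$-rotation and through the lattice of \eqref{12fb} — the multiplicity $\tilde c\,\ell_0=n\pi$ is asserted, not derived. Everything else in your argument is fine and matches the paper's (unwritten) proof; this one step is the genuine gap.
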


\subsubsection{{\bf Step 4: The Lagrange multiplier converge from below}}$\ $\\
We have so far constructed a $2$-parameter family $f_{(a,b)}$ of constrained Willmore tori covering an open set of the moduli space of conformal tori. Now we want to show that their $\;\bigPi^1$-Lagrange multiplier is converging from below as the conformal class $\;(a, b) \longrightarrow (0,b)\;$ for a fixed $\;b \sim1, b \neq 1.\;$ To do so, we show that Lagrange multiplier function $\;\alpha(t)\;$ of the subfamily $\;f^t\;$ of the candidates with conformal class $\;(t^2, b)\;$  has a maximum at $\;t=0.\;$  Thus in a first step we show that $\;\alpha(t)\;$ has a critical point at $\;t=0.$\\

\begin{Lem}\label{Ldottheta}
For $\; b \sim1\;$ fixed let $\;f^t = f_{(t^2, b)}\;$ be a sub family of candidate surfaces with $f_{(0, b)} = f^b.\;$ Then the corresponding Lagrange multipliers satisfy 
$$\dot \alpha = \dot \beta =0.$$

\end{Lem}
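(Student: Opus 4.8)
The plan is to pit the Willmore energy along the family against the penalised functional $\mathcal W_{\alpha^b,\beta^b}$. Recall from Step~3 (Proposition~\ref{Convergence}, Lemma~\ref{normalvariation}) together with the stability computations of Appendix~\ref{stability} that the limit $\bigPi^1$-Lagrange multiplier of $f_{(a,b)}$ equals $\alpha^b$ of Definition~\ref{alphab}, and that the normal variation is $\varphi^b=c\,\sin(\nu x+\phi_0)\,\vec{n}^b$ for a nonzero $\nu$ depending only on the profile parameter. In particular $f^b$ is a critical point of $\mathcal W_{\alpha^b,\beta^b}$ (the value $\alpha^b$ being irrelevant for this since $\delta\bigPi^1(f^b)=0$), one has $\delta^2\mathcal W_{\alpha^b,\beta^b}(f^b)\ge 0$, and by Lemma~\ref{mainobservation}(3) $\delta^2\mathcal W_{\alpha^b,\beta^b}(f^b)(\varphi^b,\varphi^b)=0$; by the Cauchy--Schwarz inequality for a positive semidefinite form this forces $\varphi^b$ into the kernel, i.e.\ $\delta^2\mathcal W_{\alpha^b,\beta^b}(f^b)(\varphi^b,\,\cdot\,)\equiv 0$. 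Since $\mathcal W_{\alpha^b,\beta^b}$, the conformal classes and the Lagrange multipliers are reparametrisation invariant, after a $t$-dependent reparametrisation I may assume $\partial_t f^t|_{t=0}=\varphi^b$ is purely normal.

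For $\dot\alpha$, I would use that $\bigPi^1(f^t)=t^2$ and $\bigPi^2(f^t)=b$, so $\mathcal W(f^t)=\mathcal W_{\alpha^b,\beta^b}(f^t)+\alpha^b t^2+\beta^b b$; comparing with $\tfrac{d}{dt}\mathcal W(f^t)=2t\,\alpha(t)$ (Lemma~\ref{mainobservation}(1) and the chain rule, $a=t^2$) gives for $g(t):=\mathcal W_{\alpha^b,\beta^b}(f^t)$
\[
g'(t)=\delta\mathcal W_{\alpha^b,\beta^b}(f^t)\big(\partial_t f^t\big)=2t\big(\alpha(t)-\alpha^b\big).
\]
Differentiating this and using $\alpha(0)=\alpha^b$ one reads off $g''(0)=0$ and $g'''(0)=4\dot\alpha$, so it remains to prove $g'''(0)=0$. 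The family is smooth in $t$ by Remark~\ref{regularityM}, so expanding the third derivative of $\mathcal W_{\alpha^b,\beta^b}$ at the critical point $f^b$,
\[
g'''(0)=\delta^3\mathcal W_{\alpha^b,\beta^b}(f^b)(\varphi^b,\varphi^b,\varphi^b)+3\,\delta^2\mathcal W_{\alpha^b,\beta^b}(f^b)\big(\varphi^b,\partial_t^2 f^t|_0\big)+\delta\mathcal W_{\alpha^b,\beta^b}(f^b)\big(\partial_t^3 f^t|_0\big),
\]
where the last term vanishes since $f^b$ is critical and the middle term vanishes since $\varphi^b$ lies in the kernel of $\delta^2\mathcal W_{\alpha^b,\beta^b}(f^b)$.

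The remaining cubic term is the crux, and here I would exploit the translation symmetry of the homogeneous torus: for every $x_0$ the translation $T_{x_0}\colon(x,y)\mapsto(x+x_0,y)$ satisfies $f^b\circ T_{x_0}=R_{x_0}\circ f^b$ with $R_{x_0}\in U(1)\times U(1)\subset\text{M\"ob}(S^3)$, hence $\mathcal W_{\alpha^b,\beta^b}$ is invariant under $h\mapsto R_{x_0}\circ h\circ T_{x_0}^{-1}$ (M\"obius invariance of $\mathcal W,\bigPi^1,\bigPi^2$ and invariance under the biholomorphism $T_{x_0}$ of $T^2_b$). Applying this with $x_0=\pi/\nu$ to the geodesic variation $\epsilon\mapsto\exp_{f^b}(\epsilon\varphi^b)$ sends it to $\epsilon\mapsto\exp_{f^b}(-\epsilon\varphi^b)$, because $T_{\pi/\nu}$ pulls $\sin(\nu x+\phi_0)$ back to $-\sin(\nu x+\phi_0)$ and the orientation-preserving $R_{\pi/\nu}$ fixes the unit normal. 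Therefore $\epsilon\mapsto\mathcal W_{\alpha^b,\beta^b}\big(\exp_{f^b}(\epsilon\varphi^b)\big)$ is even; its third derivative at $\epsilon=0$ equals $\delta^3\mathcal W_{\alpha^b,\beta^b}(f^b)(\varphi^b,\varphi^b,\varphi^b)$ (the mixed lower-order terms dropping out as the curves $\exp_{f^b}(\epsilon\varphi^b)$ are geodesics, $\varphi^b$ is a kernel direction, and $f^b$ is critical), so it vanishes. Hence $g'''(0)=0$ and $\dot\alpha=0$.

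For $\dot\beta$ the argument is soft: writing $\Omega(t,b):=\mathcal W(f_{(t^2,b)})$, which is smooth in $(t,b)$, Lemma~\ref{mainobservation} gives $\partial_t\Omega=2t\,\alpha_{(t^2,b)}$ and $\partial_b\Omega=\beta_{(t^2,b)}$ (the latter extending continuously to $t=0$), and equality of the mixed partial derivatives yields $\partial_t\beta_{(t^2,b)}=2t\,\partial_b\alpha_{(t^2,b)}\to 0$ as $t\to 0$, i.e.\ $\dot\beta=0$. The main obstacle is the vanishing of the third variation $\delta^3\mathcal W_{\alpha^b,\beta^b}(f^b)(\varphi^b,\varphi^b,\varphi^b)$, handled above by the translation symmetry; a secondary point needing care is that the construction of Section~\ref{12} (via Remark~\ref{regularityM}) genuinely produces $C^3$—in fact real analytic after reparametrisation—dependence on $t$, which is what makes these third derivatives meaningful.
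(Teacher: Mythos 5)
Your argument for $\dot\beta=0$ by commuting partial derivatives coincides with the paper's first line. For $\dot\alpha=0$, however, you take a genuinely different route. The paper works directly with the Euler--Lagrange equation \eqref{EL} for the conformal Hopf differential $q^t=(\kappa^t+i)e^{2i\theta^t}$: it differentiates \eqref{ELI} at $t=0$, splits the result into the $\sin(c_2x)$ and constant Fourier parts, uses $\dot\beta=0$ to reduce to a dichotomy "either $\dot\theta=0$ or a specific algebraic identity \eqref{c4} holds among the constants $c_4,c_6,q_1,q_2,\mu_\theta,\lambda_\theta,\ldots$", and then rules out \eqref{c4} by an explicit sign count at the Clifford torus, extended to $b\sim1$ by continuity. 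This proves $\dot\theta=0$ and then $\dot\mu_\theta=\dot\lambda_\theta=0$, hence $\dot\alpha=0$. Your route instead stays at the level of the variational functional: you reduce $\dot\alpha=0$ to $g'''(0)=0$ for $g(t)=\mathcal W_{\alpha^b,\beta^b}(f^t)$, kill the lower-order terms in the third variation using criticality of $f^b$ and the fact that $\varphi^b\in\ker\delta^2\mathcal W_{\alpha^b,\beta^b}(f^b)$ (via Cauchy--Schwarz for the positive semi-definite form), and then kill the remaining cubic term $\delta^3\mathcal W_{\alpha^b,\beta^b}(f^b)(\varphi^b,\varphi^b,\varphi^b)$ by exhibiting the profile-translation $T_{x_0}$, $x_0=\pi/\nu$, as an $\mathcal W_{\alpha^b,\beta^b}$-symmetry of $f^b$ that reverses the sign of $\varphi^b$, making $\epsilon\mapsto\mathcal W_{\alpha^b,\beta^b}(\exp_{f^b}(\epsilon\varphi^b))$ even. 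The symmetry observation is clean and nowhere appears in the paper's proof; it is arguably the more conceptual explanation for why the third-order obstruction vanishes, whereas the paper's version yields it as a by-product of a sign computation that must be pinned to the Clifford torus and propagated by continuity.

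Two points in your argument need sharpening before it is watertight. First, in the identity $g'''(0)=\delta^3\mathcal W_{\alpha^b,\beta^b}(f^b)(\varphi^b,\varphi^b,\varphi^b)+3\,\delta^2\mathcal W_{\alpha^b,\beta^b}(f^b)(\varphi^b,\partial_t^2f^t|_0)+\delta\mathcal W_{\alpha^b,\beta^b}(f^b)(\partial_t^3f^t|_0)$ the vector $\partial_t^2f^t|_0$ is not a priori normal. Your Cauchy--Schwarz argument only gives $\delta^2\mathcal W_{\alpha^b,\beta^b}(f^b)(\varphi^b,\,\cdot\,)\equiv 0$ on the space of normal variations where positive semi-definiteness (Definition~\ref{alphab} and the Appendix computations) is established; the pairing with a general tangential component needs a separate argument. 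This is fixable: one can extend the $t$-dependent reparametrisation you already invoke to a second order, absorbing the tangential part of $\partial_t^2f^t|_0$ into $\ddot\phi_0$ for a $t$-family of biholomorphisms $\phi_t$ of $T^2_b$, while noting that the non-conformal tangential part contributes nothing because $\mathcal W$ is fully reparametrisation invariant and $\bigPi^i$ pairs to zero with it at the critical degenerate point $\delta\bigPi^1(f^b)=0$. Second, you should say why the rotation $R_{x_0}$ in the $(1,2)$-parametrisation \eqref{12fb} not merely preserves orientation of $S^3$ but pushes the oriented unit normal $\vec n^b(T_{x_0}(p))$ forward to $+\vec n^b(p)$; this follows because both $T_{x_0}$ and $R_{x_0}$ are orientation-preserving and $dR_{x_0}^{-1}$ identifies the tangent planes of $f^b$ at $T_{x_0}(p)$ and $p$ orientation-compatibly, but it is the step where a stray sign would silently ruin the evenness of $h$. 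With those two caveats addressed, your proof establishes the lemma by an essentially independent mechanism, and is shorter than the paper's computation, at the price of relying on the kernel characterisation $\varphi^b\in\ker\delta^2\mathcal W_{\alpha^b,\beta^b}(f^b)$ (i.e. the identification $\hat\alpha^b=\alpha^b$ from Step~3) as an input rather than re-deriving it.
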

\begin{proof}

We first observe that 
$\;\dot \beta = 0\;$ since

$$\dot \beta = \del_{t}\del_b\mathlarger{|}_{t=0} \mathcal W\big(f^t\big) = \del_b \del_{t}\mathlarger{|}_{t=0} \mathcal W\big(f^t\big) = \big(2 t\del_b\alpha^b\big)\mathlarger{|}_{a=0} = 0.$$

Therefore, $\;\dot \alpha = 0\;$ is equivalent to $\;\dot\mu_\theta = \dot \lambda_\theta = 0.\;$
Since the candidate surfaces lie in the associated family of constrained Willmore Hopf surfaces, the conformal Hopf differential of $\;f^t\;$ is given by $$\;q^t= \big( \kappa^t(x) + i\big)e^{2i \theta^t}.$$

The candidates surfaces are all constrained Willmore and thus satisfies the Euler Lagrange equation for all $\;t\;$ we have by \eqref{EL}:

\begin{equation}\label{ELI}
(q^t)'' + 8 \big(|q^t|^2+C^t\big)q^t - 8\xi^t q^t = 2 \Re\big(q^t (- \mu^t_{\theta^t}+ i \lambda^t_{\theta^t})\big), 
\end{equation}

with $\;\mu^t_{\theta^t},\;$ $\lambda^t_{\theta^t},\;$ $C^t\;$ and $\;\xi^t\;$ being the parameters of the $(1,2)$-equivariant surfaces given by \eqref{Lagrange} and $\;\mu^t,\;$ $\lambda^t,\;$ $C_0^t\;$ and $\;\xi^t_0 \;$ being the parameters of the associated Hopf surfaces. To abbreviate the notations we omit $\;t\;$ whenever $\; t=0\; ,$ e.g., $\;\mu_{\theta} = \mu^0_{\theta^0},\;$ $\lambda_{\theta} = \lambda^0_{\theta^0}.\;$  Moreover, let $\;\dot{()}\;$ and $\;\ddot{()}\;$ denote the derivatives w.r.t. $\;t\;$ evaluated at $\;t=0$.\\

Differentiating equation \ref{ELI} by $\;t\;$ at $\;t=0\;$ once the real and imaginary parts yields the following two equations

\begin{equation}
\begin{split}
&\dot q''_1 + 24\dot q_1 q_1^2 + 8\dot q_1 q_2^2 +16\dot q_2 q_1q_2 +  8C\dot q_1 + 8\dot Cq_1 + 8 \xi \dot q_2 + 8 \dot \xi q_2 +  2 \dot \mu_\theta q_1  + 2  \mu_\theta \dot q _1+ 2\dot \lambda_\theta q_2 + 2 \lambda_\theta \dot q_2=0,\\
&\dot q''_2 + 24\dot q_2 q_2^2 + 8\dot q_2 q_1^2 +16\dot q_1 q_1q_2 +  8C\dot q_2 + 8\dot Cq_2 - 8 \xi \dot q_1- 8 \dot \xi q_1=0,
\end{split}
\end{equation}

with $\;q_1:= \Re(q^0),\;$ $q_2:= \Im(q^0),\;$ $\dot q_1 := \frac{d}{dt}|_{t=0} \Re (q^t)\;$ and $\;\dot q_2 := \frac{d}{dt}|_{t=0} \Im (q^t).\;$ Proposition \ref{convergence} gives

\begin{equation}\label{dottheta}
\dot q_2 = c_1 \sin(c_2 x) \quad \text{ and } \quad \dot q_1 = c_1 \sin (c_2x) + c_4 \dot \theta
\end{equation}

with  non zero real constants $\;c_1,\;$ $c_2\;$ and $\;c_4 <0.$ 
For the associated family of constrained Willmore Hopf surfaces we can compute (together with \eqref{Lagrange}) that 

$$C = C_0 +  \tfrac{1}{8} \Re\big((e^{4i \theta} - 1)(- \mu - i \lambda)\big), \quad \text{ and } \quad \xi^t =c_6 \kappa  + \tfrac{1}{8} \Im \big((e^{4i \theta} - 1)(- \mu - i \lambda)\big)  $$

where we have used, that $\;\xi_0= c_6 \kappa \;$ at Hopf cylinders with $\;c_6>0.\;$ Hence,

$$8\dot C=-\dot  \mu_\theta + \dot \mu , \quad 8\dot \xi = c_6 \dot \kappa- \dot \lambda_\theta + \dot \lambda  \quad \text{ and } \quad \xi^0 = const.$$

Therefore, we can split \eqref{dottheta} into $\;\sin(c_2x)\;$ and a constant part. And since $\;1 \perp \sin(c_2 x)\;$, both needs to vanish for \eqref{dottheta} to hold. The constant part of  \eqref{dottheta} gives rise to the following equations 

\begin{equation}\label{dottheta3}
\begin{split}
 &c_4 \dot \theta \Big(24 q_1^2 + 8 q_2^2 +  8C  + 2  \mu_\theta + c_6 \tfrac{1}{2\sin(2 \theta)} q_1q_2\Big)+ \dot  \mu_\theta q_1 + \dot \mu q_1    + \dot \lambda_\theta q_2 + \dot \lambda q_2  =0\\
&c_4\dot \theta \Big(16 q_1q_2- 8 \xi^0 - c_6 \tfrac{1}{2\sin(2 \theta)}q_1^2\Big) -   \dot \mu_ \theta q_2 +  \dot \mu q_2   +  \dot  \lambda_\theta q_1 - \dot \lambda q _1=0.
\end{split}
\end{equation}

It can be computed that  

 $$c \beta =  - \mu_\theta q_1   -  \lambda_\theta q_2 \quad \text{ and } \quad c \dot \beta = - \dot  \mu_\theta q_1   -  \dot \lambda_\theta q_2  =0.$$

Therefore if $\;\dot \theta = 0\;$ we would obtain

\begin{equation}
\begin{split}
& \dot \mu q_1 + \dot \lambda q_2  =0\\
  &\dot \mu_\theta q_1 + \dot \lambda_\theta q_2  =0\\
 -  & \dot \mu_ \theta q_2 +  \dot \mu q_2  +  \dot  \lambda_\theta q_1 - \dot \lambda q _1=0,
\end{split}
\end{equation}

from which we can immediately deduce that $\;\dot \mu_\theta = \dot \mu\;$ and $\;\dot \lambda_\theta = \dot \lambda.\;$  Moreover, with 

\begin{equation}\label{dotlambdamutheta2}
\begin{split}
 \dot \mu  &= \cos(4\theta) \dot \mu_\theta + \sin(4 \theta) \dot \lambda_\theta  + 4\dot \theta \lambda_\theta\\
 \dot \lambda  &= -\sin(4\theta) \dot \mu_\theta + \cos(4 \theta) \dot \lambda_\theta - 4\dot \theta \mu_\theta
\end{split}
\end{equation} 

and $\;\theta \in (0, \tfrac{\pi}{2})\;$ this is equivalent to  $\;\dot \mu_\theta = \dot \lambda_\theta= 0.\;$ It thus remains to show $\;\dot \theta=0.\;$ Inserting \eqref{dotlambdamutheta2} into \eqref{dottheta3} we obtain  together with $\;\dot \beta= 0\;$ that 

\begin{equation}
\begin{split}
& \dot \theta \left [24  c_4q_1^2 + 8 c_4q_2^2 +  8C c_4  + 2  c_4 \mu_\theta + 
 c_6 \tfrac{1}{2\sin(2 \theta)}  q_1q_2 \right ]\\ 
& + \sin(4 \theta) \dot \lambda_\theta q_1- \sin(4\theta) \dot \mu_\theta q_2   =0\\
&\dot \theta \left[ 16 c_4 q_1q_2- 8 c_4 \xi^0  + 4 \mu_\theta q_1 +  4 \lambda_\theta q_2 
- c_6 \tfrac{1}{2\sin(2 \theta)} q_1^2\right ]\\ 
&- 2 \sin^2(2\theta) \dot \mu_ \theta q_2  
   + 2\sin^2(2 \theta) \dot \lambda_\theta q_1 =0.
\end{split}
\end{equation}

Multiplying the second equation by $\;\frac{\sin(4\theta)}{2\sin^2(2\theta)} = \frac{\cos(2 \theta)}{\sin (2\theta)}\;$ and then subtracting both equations yields either

\begin{equation}\label{c4}
\begin{split}
&24  c_4q_1^2 + 8 c_4q_2^2 +  8C c_4  + 2  c_4 \mu_\theta  
 + c_6 \frac{1}{\sin^2(2 \theta)} \hat \kappa_0 q_1 \\- &\frac{\cos(2 \theta)}{\sin(2 \theta)}\Big(16 c_4 q_1q_2- 8 c_4 \xi^0  + 4 \mu_\theta q_1 +  4 \lambda_\theta q_2 \Big)=0,
\end{split}
\end{equation}

or 

$$\dot \theta = 0.$$

Thus in order to show  $\;\dot \theta = 0\;$ we need to show that \eqref{c4} does not hold. We show this for $\;f^0\;$ being the Clifford torus. Then  \eqref{c4}  cannot hold by continuity of the candidates for all homogenous tori close to the Clifford torus. Due to the Clifford torus being Willmore we have 

$$- 2 \Re\big(q (- \mu_\theta + i \lambda_\theta)\big) = 2 \lambda_\theta q_2+ 2 \mu_ \theta q_1 = 0$$

 and since the Clifford torus is minimal in $\;S^3\;$ we have moreover 
 
 $$\xi^0 = const \cdot H = 0.$$ 
 
 Thus  \eqref{c4} reduces to 
 
 $$24  c_4q_1^2 + 8 c_4q_2^2 +  8C c_4  + 2  c_4 \mu_\theta  + c_6 \frac{1}{\sin^2(2 \theta)} \hat \kappa_0 q_1- \frac{\cos(2 \theta)}{\sin(2 \theta)}16 c_4 q_1q_2=0,$$
 
The Euler-Lagrange equation \eqref{ELI} for the Clifford torus gives 

$$q_1^2 + q_2^2 + C = 0$$ 

and together with $$\;q_1- \frac{\cos(2 \theta)}{\sin(2 \theta)}\quad\text{ and }\quad q_2 = - \tfrac{1}{\sin(2 \theta)}\;$$ we obtain

$$- 16  c_4q_1\tfrac{1}{\sin(2 \theta)}  + 2  c_4 \mu_\theta  + c_6 \frac{1}{\sin^2(2 \theta)} \hat \kappa_0 q_1= 0$$

Now since an easy computation shows that $\;c_4 < 0, \;$ $\;c_6>0\;$ and $\;\mu_\theta, \;\lambda_\theta >0,\;$ $\;q_1 <0,\;$ $\;q_2>0\;$ and $\;\hat \kappa_0>0\;$ we obtain that all summands are actually strictly negative leading to a contradiction.\\

\end{proof}

Now we show that $\;\alpha(t)\;$ indeed has a maximum at $\;t=0$.

\begin{Pro}
For $\; b \sim1\;$ fixed let $\;f^t = f_{(t^2, b)}\;$ be a sub family of candidate surfaces with $\;f_{(0, b)} = f^b.\;$ Further, let $\;\alpha(t)\;$ and $\;\beta(t)\;$ be the corresponding Lagrange multipliers. Then we have

$$\ddot \beta < 0 \quad \text{ and } \quad \ddot \alpha <0.$$  

\end{Pro}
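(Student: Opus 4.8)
The plan is to mirror the first-order computation in the proof of Lemma~\ref{Ldottheta}, carrying the $t$-expansion of the constrained Willmore Euler--Lagrange equation~\eqref{ELI} one order further. Write $q^t=(\kappa^t+i)e^{2i\theta^t}$ for the conformal Hopf differential of $f^t=f_{(t^2,b)}$, with $b$ fixed. By Lemma~\ref{Ldottheta} all first-order data are trivial except the curvature: $\dot\theta=\dot\mu_\theta=\dot\lambda_\theta=0$, hence $\dot\mu=\dot\lambda=0$ by~\eqref{dotlambdamutheta2} and $d=0$ by~\eqref{dotRkappa}, so $\dot\kappa\propto\sin(cx)$ by~\eqref{dotkappa1} and (by~\eqref{dottheta} with $\dot\theta=0$) $\dot q_1,\dot q_2$ are a single $\sin$-mode of zero average. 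The analytic $\bigPi$-Lagrange multipliers $(\alpha(t),\beta(t))$ depend on the surface only through the integrable-systems multipliers $(\mu_\theta^t,\lambda_\theta^t)$, and this correspondence is a local diffeomorphism near $f^b$ (it underlies the equivalence ``$\dot\alpha=0\iff\dot\mu_\theta=\dot\lambda_\theta=0$'' used in Lemma~\ref{Ldottheta}); since $\dot\mu_\theta=\dot\lambda_\theta=0$ there, the quadratic cross terms drop and
\[
\begin{pmatrix}\ddot\alpha(0)\\ \ddot\beta(0)\end{pmatrix}=J\begin{pmatrix}\ddot\mu_\theta(0)\\ \ddot\lambda_\theta(0)\end{pmatrix},
\]
with $J$ the fixed, invertible linearization at $f^b$ of the conversion between the integrable-systems and analytic normalizations. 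So the statement reduces to computing $\ddot\mu_\theta(0),\ddot\lambda_\theta(0)$.

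For this I would differentiate~\eqref{ELI} twice in $t$ at $t=0$, jointly with the Gau\ss--Codazzi relation for $\xi^t$, the closing condition $\Phi\equiv 2\pi$, the $(1,2)$-equivariance constraint $\theta=\theta(\mu,\lambda,\kappa_0)$, and the conformal-type equations $\bigPi^1(f^t)=t^2$, $\bigPi^2(f^t)=b$. Since all first-order data vanish except $\dot\kappa$, and $\dot C_0=0$ ($C_0\equiv-\tfrac12$) while $\dot\xi_0=c_6\dot\kappa\propto\sin(cx)$, the only inhomogeneity produced is quadratic in $\dot\kappa$, hence proportional to $\sin^2(cx)=\tfrac12-\tfrac12\cos(2cx)$. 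The twice-differentiated real and imaginary parts therefore split into a constant Fourier mode and a $\cos(2cx)$-mode (the $\sin(cx)$-mode having already been exhausted by Lemma~\ref{Ldottheta}); the $\cos(2cx)$-mode fixes the oscillating part of $\ddot q_1,\ddot q_2$, and the constant mode---supplemented by the second-order conformal-type conditions $\partial_t^2\bigPi^1(f^t)\big|_{t=0}=2$, $\partial_t^2\bigPi^2(f^t)\big|_{t=0}=0$, which pin down the averages $\overline{\ddot q_1},\overline{\ddot q_2}$ exactly as $\dot A_{1,2}=\dot L_{1,2}=0$ did at first order---gives a finite linear system with unique solution $(\ddot\mu_\theta(0),\ddot\lambda_\theta(0))$, once $\ddot C,\ddot\xi,\ddot\theta$ have been eliminated via the twice-differentiated~\eqref{Lagrange} and the equivariance constraint.

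It then remains to read off the signs. As in Lemma~\ref{Ldottheta} it suffices to do this at $f^0=f^1$, the Clifford torus, the case $b\sim1$, $b\neq1$ following by continuity of the construction in $(g_2,g_3)$. There one has $\xi^0=0$, $q_1^2+q_2^2+C=0$, $\mu_\theta q_1+\lambda_\theta q_2=0$, together with the sign data $c_4<0$, $c_6>0$, $\mu_\theta,\lambda_\theta>0$, $q_1<0$, $q_2>0$, $\hat\kappa_0>0$ already assembled in the proof of Lemma~\ref{Ldottheta}; the only new input, needed to sign $\overline{\ddot q_1},\overline{\ddot q_2}$, is the strict second-order convexity of length subject to fixed enclosed area at the round profile circle (the isoperimetric inequality; cf.\ the proof of Proposition~\ref{openneigh}, $\delta^2L(f^b)(\hat\varphi,\hat\varphi)>0$). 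Substituting into the linear system should give $\ddot\mu_\theta(0)<0$, and applying $J$ one checks that in each of $\ddot\alpha(0),\ddot\beta(0)$ the contributions from $\ddot\mu_\theta$ and $\ddot\lambda_\theta$ carry the same, negative, sign and so cannot cancel, whence $\ddot\alpha(0)<0$ and $\ddot\beta(0)<0$. For $\ddot\beta$ there is also a shortcut avoiding the second-order expansion: from $\mathcal W(f_{(t^2,b)})=\tilde\omega(t^2,b)$ with $\partial_a\tilde\omega=\alpha$ and $\partial_b\tilde\omega=\beta$ (Lemma~\ref{mainobservation}), equality of the mixed partials of $\tilde\omega$ gives $\partial_t\beta(t)=2t\,\partial_b\alpha(t^2,b)$, hence $\ddot\beta(0)=2\,\tfrac{d}{db}\alpha^b$ (using $\alpha(0)=\alpha^b$), and $\tfrac{d}{db}\alpha^b<0$ for $b\sim1$, $b>1$, follows from the explicit formula for $\alpha^b$ computed in \cite[Section~3]{HelNdi1} (reproduced in Appendix~\ref{stability}).

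The main obstacle is the bookkeeping in the second step: unlike at first order, the quadratic term in $\dot\kappa$ feeds a genuinely new $\cos(2cx)$-mode into $\ddot q_i$, so two Fourier components must be solved for simultaneously while tracking how $\ddot C,\ddot\xi,\ddot\theta$ are slaved to $\ddot\mu_\theta,\ddot\lambda_\theta$, and at the end one must verify the two terms building $\ddot\alpha$ (resp.\ $\ddot\beta$) reinforce rather than cancel. A purely ``soft'' route via the fourth variation of $\mathcal W$ is not available: since $\varphi=\partial_t f^t|_{t=0}$ is a null direction of $\delta^2\mathcal W_{\alpha^b,\beta^b}(f^b)$, one finds $6\ddot\alpha(0)=\delta^4\mathcal W_{\alpha^b,\beta^b}(f^b)(\varphi,\varphi,\varphi,\varphi)+6\,\delta^3\mathcal W_{\alpha^b,\beta^b}(f^b)(\varphi,\varphi,\psi)+3\,\delta^2\mathcal W_{\alpha^b,\beta^b}(f^b)(\psi,\psi)$ with $\psi=\partial_t^2 f^t|_{t=0}$, and since the last term is $\geq 0$ the negativity of $\ddot\alpha(0)$ is a genuinely higher-order effect that the second variation alone cannot detect.
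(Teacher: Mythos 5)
Your shortcut for $\ddot\beta$ is exactly the paper's argument: $\partial_t\beta = 2t\,\partial_b\alpha(t^2,b)$ by commuting mixed partials of $\tilde\omega$, giving $\ddot\beta(0)=2\,\tfrac{d}{db}\alpha^b$, together with $\alpha^{1/b}=\alpha^b$ forcing $\ddot\beta=0$ at $b=1$ and the monotonicity of $\alpha^b$ from the stability computations giving $\ddot\beta<0$ for $b>1$. This part is fine.

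For $\ddot\alpha$ your plan diverges from the paper and, as written, has a genuine gap. The paper never solves a two-mode linear system from the twice-differentiated Euler--Lagrange equation. Instead it exploits the $\ddot\beta$ information it has just established: it records the two scalar identities $c\,\ddot\beta=-\ddot\mu_\theta q_1-\ddot\lambda_\theta q_2$ and $\tilde c\,\ddot\alpha=\ddot\lambda_\theta q_1-\ddot\mu_\theta q_2$ (with $c,\tilde c>0$), and since $\ddot\beta\le0$ the first gives a one-sided bound on $\ddot\lambda_\theta$ which, plugged into the second, yields $\tilde c\,\ddot\alpha\le -\tfrac{1}{q_2}\ddot\mu_\theta(q_1^2+q_2^2)$. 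Everything therefore reduces to $\ddot\mu_\theta>0$. That positivity is then obtained not from a Fourier decomposition of the second variation of~\eqref{ELI}, but from the monodromy/closing condition: differentiating $\Phi(\mu_\theta,\sqrt{-D},L_{1,2})\equiv2\pi$ twice (using $\dot\mu_\theta=\dot L_{1,2}=0$ and $\del_{\sqrt{-D}}\Phi|_{\sqrt{-D}=0}=0$), combining with $\ddot L_{1,2}>0$ deduced from the conformal-type constraints $b\equiv const$, $a=t^2\ge0$, and invoking the sign information $\del_{\mu_\theta}\Phi<0$, $\del_{L_{1,2}}\Phi>0$ already established in Proposition~\ref{delmu}. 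Your proposal contains neither the reduction to $\ddot\mu_\theta>0$ nor the $\Phi$-based argument for it; instead you plan a full Fourier-mode expansion of $\ddot q_i$ and then ``check reinforcement rather than cancellation,'' which you yourself flag as the unresolved obstacle. That check is precisely what the paper's reduction is designed to avoid, because there is no a priori reason the coefficients of $\ddot\mu_\theta$ and $\ddot\lambda_\theta$ entering $\ddot\alpha$ should have a common sign without the constraint coming from $\ddot\beta\le0$. Also, the ``isoperimetric'' input you cite is not where the paper gets $\ddot L_{1,2}>0$: that comes from the algebraic conformal-type relations $q_2\ddot A_{1,2}-q_1\ddot L_{1,2}=0$ and $q_1\ddot A_{1,2}+q_2\ddot L_{1,2}>0$, not from $\delta^2 L>0$ under fixed area. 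So the $\ddot\beta$ half is correct and matches the paper; the $\ddot\alpha$ half is an incomplete alternative plan missing the paper's key reduction.
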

\begin{proof}  
Using similar computations as in Lemma \ref{Ldottheta} can show that 

$$\ddot \beta = \del_b \alpha^b.$$

Since $$\;\alpha^{\tfrac{1}{b}} = \alpha^b,\;$$

 we obtain that $\;\ddot \beta =0\;$ at $\;b=1.\;$ On the other hand, the stability computations in Appendix \ref{stability} shows that $\;\alpha^b\;$ is decreasing in $\;b\;$ for $\;b>1.\;$ Therefore, 

$$\ddot \beta < 0$$ 

for $\;b >1\;$ due to the real analyticity of the candidate surfaces. As in the proof of Lemma \ref{Ldottheta} it is easy to compute that there exists a positive constant $\;c\;$ with

\begin{equation}\label{ddotb}
c \ddot \beta = - \ddot \mu_\theta q_1 - \ddot \lambda_\theta q_2 \leq0.
\end{equation}

Similar computations show that there is a other positive constant $\;\tilde c\;$ such that 

$$\tilde c \ddot \alpha =  \ddot \lambda_\theta q_1 - \ddot \mu _\theta q_2.$$
Therefore \eqref{ddotb} implies,

$$\tilde c \ddot \alpha  \leq - \tfrac{1}{q_2} \ddot \mu_\theta (q_1^2 + q_2^2).$$

It thus remains to show $\;\ddot \mu_\theta >0.\;$
The candidate surfaces constructed in the previous sections are associated to constrained Willmore Hopf cylinders, i.e., constrained elastic curves in $\;S^2,\;$ with geodesic curvature $\;\kappa\;$ being wavelike. Therefore the discriminant $\;D(t)= g^3_2(t) - 27g^2_3(t)\;$ of the corresponding Weierstrass $\;\wp$-functions is non-positive and has a maximum at homogenous solutions, i.e.,

$$\dot D = 0 \quad \text{ and } \quad \ddot D<0$$

The candidate surfaces all satisfy the closing condition:

$$\Phi\big(\mu_\theta(t), g_2(t), g_3(t)\big) \equiv 2\pi.$$

Replacing the parameters $\;g_2,\; g_3\;$ by $\; \mathsmaller{\sqrt{-D}}\;$ and $\;L_{1,2}\;$ we obtain

$$\Phi\big(\mu_\theta(t), \mathsmaller{\sqrt{-D}}(t), L_{1,2}(t)\big)\equiv 2\pi.$$

Note that for candidate surfaces $\;\Phi\;$ is continuous in $\;\mu_\theta,\;$ $\mathsmaller{\sqrt{-D}}\;$ and $\;L_{1,2}\;$ is real analytic in its parameters, since at $\;D=0\;$ we have that $\;\mathsmaller{\sqrt{-D.}}\;$ has the correct singularity.  By Lemma \ref{Ldottheta}

 $$\dot \theta = \dot \mu_ \theta = \dot \lambda_\theta = 0.$$

Moreover

$$\dot L_{1,2} = 0,$$  

and thus 
 $\;\dot \Phi = 0\;$ is equivalent to $\;\del_{\mathsmaller{\sqrt{-D}}}\Phi\mathlarger{|}_{{\mathsmaller{\sqrt{-D}}}=0} = 0.\;$ Therefore, the second derivative of $\;\Phi\;$ is computed to be:
 
$$\ddot\Phi= \ddot \mu_\theta \del_{\mu_\theta} \Phi + \big(\dot {\mathsmaller{\sqrt{-D} }}\del^2_{\mathsmaller{\sqrt{-D} \sqrt{-D}}} \Phi\big)\mathlarger{|}_{{\mathsmaller{\sqrt{-D}}}=0} + \ddot L_{1,2} \del_{L_{1,2}}\Phi = 0,$$

with $\;\dot {\mathsmaller{\sqrt{-D}}}>0.\;$ The conformal class of the candidate surfaces is given by 

$$(a, b) = c \big(  q_1 A_{1,2} + q_2L_{1,2},\; q_2 A_{1,2} - q_1 L_{1,2}\big) $$

since $\;b \equiv const, \;$  and $\;a\geq 0\;$ for $\;f^t\;$ we obtain

$$q_2 \ddot A_{1,2} -  q_1  \ddot L_{1,2} = 0,\quad \text{ and } \quad q_1\ddot A_{1,2} + q_2 \ddot  L_{1,2} >0.$$

From this we can conclude $\;\ddot L_{1,2} >0\;$ and hence 

$$\int_{\gamma_{1,2}} \ddot R\; ds  >0 \quad \text{and also} \quad \int_{\gamma_{1,2}} \big(\mathsmaller{\del^2_{\sqrt{-D} \sqrt{-D}}}R\big)\mathlarger{|}_{{\mathsmaller{\sqrt{-D}}}=0} \;ds >0.$$

Now, a straight forward computation shows

$$\big( \del^2_{\sqrt{-D} \sqrt{-D}} \Phi\big)\mathlarger{|}_{{\sqrt{-D}}=0}.$$

For $\;b \sim 1\;$ we have already computed   

$$\;\del_{\mu_\theta}\Phi\mathlarger{|}_{\mu_\theta=\mu_\theta(f^b)} <0\;\text{ and }\;\del_{L_{1,2}}\Phi\mathlarger{|}_{L_{1,2} = L_{1,2}(f^b)}> 0,\;$$ 

see Proposition \ref{delmu}. Therefore, we obtain 

$$\ddot \mu_\theta > 0.$$

 \end{proof}
 
 \appendix

\section{Stability properties of a penalized Willmore energy}\label{stability}
In the computations below we mostly follow \cite{KuwertLorenz} and thus we refer to that paper for details. To fix the notations, we consider immersions

$$f: T^2=\C / \Gamma \longrightarrow (S^3, \;g_{S^3}),$$

where $\;\Gamma\;$ is a lattice and $\;g_{S^3}\;$ is the round metric on $\;S^3.\;$ Let Imm$\;(\C / \Gamma)\;$ denote the space of all such immersions and $\;$Met$(\C / \Gamma)\;$ the space of all metrics on the torus $\;T^2.\;$ Moreover, let 

\begin{equation*}
\begin{split}G: \text{Imm }(T^2) &\longrightarrow  \text{Met }(T^2)\\
 f &\longmapsto f^*g_{S^3}
\end{split}
\end{equation*}

 be the map which assigns to every immersion its induced metric.  We denote by $\;\pi\;$ the projection from the space of metrics to the Teichm\"ulller space, which we model by the upper half plane $\;\H^2\;$ and with the notations above we can define $\;\bigPi\;$ to be:
 
 $$\bigPi = \pi \circ G : \text{Imm }(T^2) \longrightarrow \H^2.$$ 

As in \cite{KuwertLorenz} we parametrize the homogenous torus with conformal class $\;b= \tfrac{s}{r},\;$ and $\;r^2 + s^2 = 1\;$ as

\begin{equation}\label{parametrizationofhomogenous}
\begin{split}
 f^b: T^2_b = \C / \left(2\pi r \Z \oplus 2\pi i s \Z\right) &\longrightarrow S^3,\\
 (x,y) &\longmapsto\begin{pmatrix}r e^{i\tfrac{x}{r}}, &s e^{i\tfrac{y}{s}}\end{pmatrix}.
 \end{split}
\end{equation}

We want to compute the value of $\;\alpha^b\;$ which we recall to be

$$\alpha^b = \text{ max }\big\{\;\alpha \; | \  \delta^2 \mathcal W_{\alpha, \beta^b}\big(f^b\big) \geq0\;\big\}.$$
 
From \cite{KuwertLorenz} we can derive that $\;\alpha^b\;$ is characterized by the fact that $\;\delta^2 \mathcal W_{\alpha^b, \beta^b}|_{f^b}\geq 0\;$ and there exist a non-trivial normal variation $\;\varphi^b\;$ of $\;f^b\;$ such that 

$$ \delta^2 \mathcal W_{\alpha^b, \beta^b}\big(f^b\big)\big(\varphi^b, \varphi^b\big)= 0, \quad \text{and}\quad \delta^2 \mathcal W_{\alpha, \beta^b}\big(f^b\big)\big(\varphi^b, \varphi^b\big)<0, \text{ for } \alpha >\alpha^b.$$  

We will show that for $\;b \neq 1\;$ the variation $\;\varphi^b\;$ is unique up to scaling, isometry of the ambient space and reparametrization of the surface $\;f^b.\;$ We will also choose the orientation of $\;f^b\;$ and the variation $\;\varphi^b\;$ such that $\;\delta^2 \bigPi^1\big(f^b\big) \geq 0.$ \\

While for $\;b= 1\;$ the exact value of $\;\alpha^1\;$ and the associated normal variations can be computed, $\;\alpha^b\;$ for $\;b \neq 1\;$ does not have a nice explicit form. Nevertheless, we will show that the unique normal variation $\;\varphi^b\;$ characterizing $\;\alpha^b\;$ remain the same (in a appropriate sense) for all $\;b\sim1.\;$  In fact, the normal variation $\;{\displaystyle \lim_{a\rightarrow 0} }\big(\del_{\sqrt{a}} f_{(a,b)}\big)^\perp\;$ is the information we use to show that the Lagrange multipliers of the candidates $\;f_{(a,b)}\;$ converge to the $\;\alpha^b\;$ as $\;a \longrightarrow 0,\;$ see Theorem \ref{explicitcandidates}.\\

We first restrict to the case $\;b=1$ -- the Clifford torus. Since $\;\beta^1 = 0\;$ 
we investigate for which $\;\alpha\;$ the Clifford torus $f^1$ is stable for the penalized Willmore functional $\;\mathcal W_{\alpha} = \mathcal W - \alpha\bigPi^1.$

The second variation of the Willmore functional is well known. Thus we first concentrate on the computation of the second variation of $\;\bigPi^1.\;$ Another well known fact is $\;\delta \bigPi^1(f^1) = 0.\;$ Moreover, we have 

$$D^2 \bigPi^1\big(f^1\big)\big(\varphi, \varphi\big) = D^2\pi^1\Big(G\big(f^1\big)\Big)\Big(DG\big(f^1\big)\varphi, DG\big(f^1\big)\varphi\Big) +  D\pi^1\Big(G\big(f^1\big)\Big)\Big(D^2G\big(f^1\big)\big(\varphi,\varphi\big)\Big)$$

The first term is computed in Lemma $4$ of \cite{KuwertLorenz} to be

$$D \pi^1\Big(G\big(f^1\big)\Big)\Big(D^2G\big(f^1\big)\big(\varphi,\varphi\big)\Big) = - \tfrac{1}{\pi^2} \int_{T^2_1} <\nabla^2_{12} \varphi, \varphi> d \mu_{g_{euc}},$$

for normal variations $\varphi.$ It remains to compute the second term

$$D^2\pi^1\Big(G\big(f^1\big)\Big)\Big(DG\big(f^1\big)\varphi, DG\big(f^1\big)\varphi\Big).$$

By a straight forward computation (or by Lemma $2$ of \cite{KuwertLorenz}) we have  

$$D G\big(f^1\big)\varphi = - 2 \int_{T^2_1}<\text{II}, \varphi> d \mu_{g_{euc}},$$

where $\;\text{II}\;$ is the second fundamental form of the Clifford torus, which is trace free.\\

\noindent
Let $\;u\;$ and $\;v \in S_2(T^2_1)\;$ be symmetric $2-$forms satisfying 

$$\Tr_{euc} u = \Tr_{euc}v = 0 \quad \text{ and } \quad v \perp_{euc} S_2^{TT}(g_{euc}),$$

where $\;S_2^{TT}(g_{euc})\;$ is the space of symmetric, covariant, transverse traceless $2$-tensors with standard basis $\;q^1\;$ and $\;q^2\;$. Let $\;g(t) = g_{euc} + t u\;$ and $\;q^i(t) := q^i\big(g(t)\big)\;$ the corresponding basis with respect to $\;g(t)\;$ be as in \cite[Lemma 6]{KuwertLorenz} (see p.~10, l.~1). Then by \cite[Lemma 6]{KuwertLorenz} (p.~10, l.~2), we have that $\;\big(q^i(t) - q^i\big) \perp_{euc} S_2^{TT}(g_{euc}).\;$ On the other hand we can expand $\;v\;$ by 

$$v= v_i(t) q^i(t) + v^\perp(t), \quad \text{where }v^\perp(t)\perp_{g(t)} S_2^{TT}(g(t)).$$

By assumption we have $\;v_i(0) = 0\;$ and thus 
$$D^2\pi^1(g_{euc})(u, v) = \frac{d}{dt} D\pi^1\big(g(t)\big) \cdot v|_{t=0} = v'_{1}(0) D\pi^1(g_{euc}) \cdot q^1,$$
where
$$v'_{1}(0) = \tfrac{1}{4\pi^2} <v, (q^1)'(0)>_{L^2(g_{euc})},$$
as computed in  \cite{KuwertLorenz}. \\

Let $\;\eta:= \big(q^1\big)'(0) \; $ and $\;\eta^\circ = \eta_1 q^1 + \eta_2 q^2\;$ be its traceless part, then by the consequence of Lemma 6 of \cite{KuwertLorenz} given by the formula in page 11, line -12 in \cite{KuwertLorenz} (applied with \;$\mu=1$), and the formula in page 5, line 8 still in \cite{KuwertLorenz}, we have

\begin{equation}\label{etagleichung}
\begin{split}
(\text{div}_{euc} \eta^{\circ})_1 &= (\text{div}_{euc} u)_2\\
(\text{div}_{euc} \eta^{\circ})_2 &= (\text{div}_{euc} u)_1.
\end{split}
\end{equation}

For $\;u = u_1 q^1 + u_2 q^2\;$ we obtain,

$$ (\text{div}_{euc} u)_1 = \partial_2 u_1 - \partial_1 u_2, \quad (\text{div}_{euc} u)_2 = \partial_1 u_1 + \partial_2 u_2,$$

and therefore the Equations \eqref{etagleichung} become 

\begin{equation}\label{Ungleichung}
\begin{split}
 \partial_2 \eta_1 - \partial_1 \eta_2 &= \partial_1 u_1 + \partial_2 u_2\\
 \partial_1 \eta_1 + \partial_2 \eta_2 &= \partial_2 u_1 - \partial_1 u_2.
\end{split}
\end{equation} 

If we specialize to the relevant case $\;u = u_2 q^2\;$ and $\;v = v_2 q^2\;$ this yields 

$$\big(D^2\pi(g_{euc})(u, v)\big)_1 = \tfrac{1}{4\pi^2}< v_2 q^2 ,\eta^\circ>_{L^2(g_{euc})},$$ 

and we only need to concentrate on $\;\eta_2.\;$  Differentiating the Equations \eqref{Ungleichung} and subtracting these form each other gives (with $\;u_1 = 0$)

\begin{equation}\label{laplace}
\Delta \eta_2 = - 2 \partial_1\partial_2 u_2.
\end{equation}

In order to compute $\;\eta_2\; $ we restrict to normal variations $\;\varphi = \Phi \vec{n}\;$ for doubly periodic functions $\;\Phi\;$ in a Fourier space, i.e.,  $\;\Phi\;$ is a doubly periodic function on $\;\C\;$ with respect to the lattice \;$\;\sqrt{2}\pi \Z + \sqrt{2}\pi i \Z.\;$ The Fourier space $\;\mathcal F \big(T^2_1\big)\;$ of doubly periodic functions is the disjoint union of the constant functions and the \;$4$-dimensional spaces $\;\mathcal A_{kl}\big(T^2_1\big),\;$ $(k,l) \in \N\setminus \{(0,0)\}\;$ with basis

\begin{equation}
\begin{split}
&\sin\big(\sqrt{2} kx\big)\cos\big(\sqrt{2}ly\big), \quad \cos\big(\sqrt{2}kx\big)\sin\big(\sqrt{2}ly\big),\\ &\cos\big(\sqrt{2}kx\big)\cos\big(\sqrt{2}ly\big), \quad \sin\big(\sqrt{2}kx\big)\sin\big(\sqrt{2}ly\big).
\end{split}
\end{equation}

We restrict to the case where $\;\Phi = \Phi_{kl} \in \mathcal A_{kl},$ $(k,l) \in \N^2\setminus (0,0)\;$ in the following. Then for $\;u = v = \Phi_{kl} \vec{n}\;$ we obtain that 

$$\eta_2 = \tfrac{1}{k^2+ l^2}\partial_1\partial_2 \Phi_{kl}$$ 

solves equation \eqref{laplace}. The integration constant is hereby chosen such that $\;<\eta^{0}, q_1>_{L^2(g_{euc})} = 0.$
 
 Thus
 $$D^2\pi^1\big(G\big(f^1\big)\big)(u, v) = \tfrac{1}{2\pi^2 (k^2 + l^2)} \int_{T^2_1} \big(\partial^2_{12} \Phi_{kl}\big) \Phi_{kl}.$$
 
Put all calculations together we obtain

\begin{equation*}
\begin{split}
D^2\bigPi^1\big(f^1\big)\big(\varphi, \varphi\big)
&= -\tfrac{1}{\pi^2} \int_{T^2_1} \big(\partial^2_{12} \Phi_{kl}\big) \Phi_{kl}  \\
 &+ \tfrac{2}{\pi^2(k^2 + l^2)} \int_{T^2_1} \big(\partial^2_{12} \Phi_{kl}\big) \Phi_{kl}.
\end{split}
\end{equation*}

\begin{Rem}
The second variation for general normal variation $\;\varphi = \left(\sum_{k,l \in \N^2} a_{k,l} \Phi_{k,l}\right) \vec{n}\;$ is obtained by linearity. Terms obtain by pairing $\;\Phi_{k,l}\;$ and $\;\Phi_{m,n},\;$ where $\;(k,l) \neq (m,n)\;$ vanishes. To determine stability of $\;\mathcal W_\alpha\;$ we can thus restrict ourselves with out loss of generality to the case $\;\varphi = \Phi_{k,l} \vec{n}.$\\
\end{Rem}

Clearly, if for a normal variation $\;\varphi\;$ we have 

$$D^2\bigPi^1\big(f^1\big)\big(\varphi, \varphi\big) \leq 0,$$

 then by the stability of the Clifford torus 
 
$$D^2 \mathcal W_{\alpha}\big(f^1\big)\big(\varphi, \varphi\big) \geq 0$$  

for all $\;\alpha \geq 0.\;$ Moreover, for

\begin{equation*}
\begin{split}
\Phi_{kl} &= a \sin\big(\sqrt{2}kx\big)\cos\big(\sqrt{2}ly\big) + b \cos\big(\sqrt{2}kx\big)\sin\big(\sqrt{2}ly\big)\\ &+ c \cos\big(\sqrt{2}kx\big)\cos\big(\sqrt{2}ly\big) + d \sin\big(\sqrt{2}kx\big)\sin\big(\sqrt{2}ly\big)
\end{split}
\end{equation*}

with $\;k,l \in \N \setminus \{0\}\;$ and $\;a,\; b, \;c, \;d \in \R\;$ we have:

\begin{equation}\label{D^2Pi^1}
\begin{split}
D^2\bigPi^1\big(f^1\big)\big(\varphi, \varphi)  &=\tfrac{1}{\pi^2} \big(2kl - \tfrac{4kl}{k^2+l^2} \big)\tfrac{2ab - 2cd}{a^2 + b^2+ c^2+ d^2}<\varphi, \varphi>_{L^2}\\& \leq \tfrac{1}{\pi^2} \big(2kl - \tfrac{4kl}{k^2+l^2} \big)<\varphi, \varphi>_{L^2},
\end{split}
\end{equation}

with equality if and only if 
\begin{equation}\label{Pi2=}
a = b \quad \text{ and } \quad c = - d.
\end{equation}

\noindent
The second variation of the Willmore functional at the Clifford torus (Lemma 3 \cite{KuwertLorenz}) is given by:

\begin{equation}\label{D^2W}
\begin{split}
D^2 \mathcal W\big(f^1\big) (\varphi, \varphi) &= <\big(\tfrac{1}{2}\Delta^2 + 3\Delta + 4\big)\varphi,\varphi >_{L^2}\\
&= \big(2 (k^2 + l^2)^2 - 6(k^2 + l^2) + 4\big)<\varphi,\varphi>_{L^2}.
\end{split}
\end{equation}

Therefore we have

$$D^2 \mathcal W\big(f^1\big) (\varphi, \varphi) =  0,$$ if and only if
$\;k =\pm 1\;$ and $\;l = \pm 1,\;$
or $\;k = 0\;$ and $\;l = \pm1,\;$
 or $\;k = \pm1\;$ and $\;l = 0.$\\

\noindent
Let $\;c:= \tfrac{k}{l}\;$ and we assume without loss of generality that $\;c \geq 1,\;$ then the second variation formulas \eqref{D^2Pi^1} and \eqref{D^2W} simplifies to:

\begin{equation*}
\begin{split}
D^2 \mathcal W\big(f^1\big) (\varphi, \varphi) &= \big(2 (c^2 + 1)^2 l^4 - 6 (c^2 + 1) l^2 + 4\big)<\varphi,\varphi>_{L^2}\\
D^2\bigPi^1\big(f^1\big)(\varphi, \varphi)  &\leq  \tfrac{1}{\pi^2} \big(2c l^2 -4 \tfrac{c}{c^2 +1}\big)<\varphi,\varphi>_{L^2}.
\end{split}
\end{equation*}

Hence we obtain for $\; \tilde\alpha = \tfrac{1}{4\pi^2} \alpha$

$$D^2 \mathcal W_{\alpha} (f^1)(\varphi, \varphi) \geq \Big(2 (c^2 + 1)^2 l^4 - \big(6(c^2 + 1) +  8\tilde \alpha c\big)l^2 + 4 + 16\tilde \alpha \tfrac{c}{c^2 +1} \Big) <\varphi,\varphi>_{L^2}.$$

with equality is and only if $\;\Phi\;$ is satisfies \eqref{Pi2=}.
We still want to determine the range of $\;\alpha\;$ for which $\;\mathcal W_{\alpha}\;$ is stable.  At $\;\alpha = \alpha^b\;$ the second variation of $\;\mathcal W_{\alpha}\;$ have zero directions in the normal part which are not  M\"obius variations. Thus we need to determine the roots of the polynomial 

$$g_{\tilde \alpha, c}(l): = \Big(2 (c^2 + 1)^2 l^4 - \big(6(c^2 + 1) +  8\tilde \alpha c\big)l^2 + 4 + 16\tilde \alpha \tfrac{c}{c^2 +1} \Big)$$

The polynomial $\;g_{\tilde \alpha, c}\;$ is even, its leading coefficient is positive and its roots satisfy:

\begin{equation}\label{rootsl}
l^2 = \tfrac{2}{c^2 +1}, \text{ or } \quad l^2= \tfrac{1}{c^2 +1} + 4 \tilde \alpha\tfrac{c}{(c^2 +1)^2}.
\end{equation}

The values of $\;l \in \N\;$ for which $\;g_{\tilde \alpha, c}\;$ is negative lies exactly between the positive roots of $\;g_{\tilde \alpha, c}.\;$ So we want to determine $\;\tilde \alpha\;$ such that this region of negativity  for $\;g_{\tilde \alpha, c},\;$ i.e., the interval between the two positive solutions $\;l_1(\tilde \alpha, c)\;$ and $\;l_2(\tilde \alpha, c)\;$ of \eqref{rootsl} contain no positive integer for all $\;c  = \tfrac{k}{l}\;$ (other than those combinations leading to a M\"obius variation).
\noindent 
We consider two different cases:

$$c= 1 \text{ and } c > 1.$$

For $\;c = 1\;$ the four roots of $\;g_{\tilde \alpha,1}\;$ are determined by:  

$$l^2 =1, \quad l^2 = \tfrac{1}{2} + \tilde \alpha.$$

Since the case of $\;l^2 = 1,\;$ i.e., $\;l=k = \pm1\;$ corresponds to M\"obius variations, we can rule out the existence of negative values of $\;g_{\tilde \alpha, 1}\;$ if and only if the second root satisfies 

$$|l| \leq 2, \quad \text{ or equivalently, } \quad  l^2 = \tfrac{1}{2} + \tilde \alpha \leq 4.$$

 From which we obtain $\;\tilde \alpha \leq \frac{7}{2}.$\\

For $\;c > 1,\;$ the first equation $\;l^2 = \tfrac{2}{c^2 +1}< 1\;$ is never satisfied for an integer $\;l.\;$ Thus we only need to consider the equation
$$l^2 = \tfrac{1}{c^2 +1} + 4 \tilde \alpha\tfrac{c}{(c^2 +1)^2}.$$

To rule out negative directions for $\;D^2\mathcal W_{4 \pi^2 \tilde \alpha}\;$ it is necessary and sufficient to have 

$$l^2 = \tfrac{1}{c^2 +1} + 4 \tilde \alpha\tfrac{c}{(c^2 +1)^2} \leq 1 $$

for appropriate $\;c= \tfrac{k}{l}.\;$ For $\;l^2 = 1\;$ we obtain that $\;c = \tfrac{k}{l} \in \N_{>1}\;$ and $\;\tilde \alpha\;$ satisfies:

$$\tilde \alpha  =  \tfrac{1}{4} (c^3+ c).$$

The right hand side is monotonic in $\;c\;$ and therefore the minimum for $\;c \in \N_{>1}\;$ is attained at $\;c=2\;$ which is equivalent to $\;\tilde \alpha  =\frac{5}{2}.\;$ Since $\;\frac{5}{2} < \tfrac{7}{2}\;$ which was the maximum $\;\tilde \alpha\;$ in the $\;c=1\;$ case, we get that  $\;\delta^2\mathcal W_{10 \pi^2}\big(f^1\big) \geq 0.\;$ Further, at $\;\tilde \alpha = \frac{5}{2}\;$ the (non-M\"obius) normal variations in the kernel of $\;\delta^2\mathcal W_{10 \pi^2}\big(f^1\big)\;$ are given by

\begin{equation}
\begin{split}
\Phi_1 &= \sin(2 \sqrt{2} y) \cos(\sqrt{2} x) +  \cos(2\sqrt{2} y) \sin(\sqrt{2} x) = \sin\big(\sqrt{2}(x+ 2y)\big)\\
\tilde  \Phi_1  &= \sin(2 \sqrt{2} y) \sin(\sqrt{2} x) - \cos(2\sqrt{2} y) \cos(\sqrt{2} x) = \cos\big(\sqrt{2}(x+2y)\big)
\end{split}
\end{equation}

and by symmetry of $\;k\;$ and $\;l\;$ (we have assumed $\;c\geq1$): 

\begin{equation}
\begin{split}
\Phi_2 &= \sin(2 \sqrt{2} x) \cos(\sqrt{2} y) +  \cos(2\sqrt{2} x) \sin(\sqrt{2} y) = \sin\big(\sqrt{2}(2x + y)\big)\\
\tilde  \Phi_2  &= \sin(2 \sqrt{2} x) \sin(\sqrt{2} y) - \cos(2\sqrt{2} x) \cos(\sqrt{2} y)= \cos\big(\sqrt{2}(2x+y)\big),
\end{split}
\end{equation}

where $\;\tilde \Phi_i (x,y) = \Phi_i (x, y+ \tfrac{\pi}{2}),\;$ i.e., $\;\Phi_i$ and $\tilde \Phi_i\;$ differ only by a translation. We have shown the following Lemma.\\

\begin{Lem}\label{kernvonalpha}
At $\;b = 1\;$ we have that 

$$\alpha^1 = \text{ max }\big\{\;\alpha >0 \  | \ \delta^2\mathcal W_{\alpha}\big(f^1\big) \geq 0\; \big\}$$

is computed to be $\;10 \pi^2.\;$\\
\end{Lem}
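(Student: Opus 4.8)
The plan is to package the Fourier-mode computations above into the claimed value of $\alpha^1$. Since $\beta^1=0$, stability of $\mathcal W_{\alpha,\beta^1}$ at $f^1$ is the same as stability of $\mathcal W_\alpha=\mathcal W-\alpha\bigPi^1$. Because $\delta^2\mathcal W(f^1)$ and $\delta^2\bigPi^1(f^1)$ are simultaneously diagonalized by the decomposition $\mathcal F(T^2_1)=\bigoplus_{(k,l)\in\N^2\setminus\{(0,0)\}}\mathcal A_{kl}$ of normal variations — cross terms between distinct modes vanish, and on constant normal variations $\delta^2\mathcal W_\alpha(f^1)$ is strictly positive — it suffices to decide the sign of $\delta^2\mathcal W_\alpha(f^1)(\varphi,\varphi)$ for $\varphi=\Phi_{kl}\vec n$ with a single $\Phi_{kl}\in\mathcal A_{kl}$.

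Fix such a mode, put $c:=k/l$ (without loss of generality $c\geq 1$, using the $k\leftrightarrow l$ symmetry) and $\tilde\alpha:=\tfrac{1}{4\pi^2}\alpha$. Combining the bound \eqref{D^2Pi^1} for $D^2\bigPi^1(f^1)(\varphi,\varphi)$ with the explicit value \eqref{D^2W} of $D^2\mathcal W(f^1)(\varphi,\varphi)$ yields
\[
D^2\mathcal W_\alpha(f^1)(\varphi,\varphi)\ \geq\ g_{\tilde\alpha,c}(l)\,<\varphi,\varphi>_{L^2},
\]
with equality exactly when $\Phi_{kl}$ satisfies \eqref{Pi2=}. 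Hence $\delta^2\mathcal W_\alpha(f^1)\geq 0$ if and only if $g_{\tilde\alpha,c}(l)\geq 0$ for every pair of positive integers whose ratio $c=k/l$ is not a M\"obius case, i.e. excluding the modes with $k^2=l^2=1$ and those with $kl=0$ on which $\delta^2\mathcal W(f^1)$ already vanishes on translation/M\"obius directions. Since $g_{\tilde\alpha,c}$ is even with positive leading coefficient, it is negative precisely strictly between its two positive roots \eqref{rootsl}, so one must take $\tilde\alpha$ as large as possible while keeping the open interval between those roots free of admissible positive integers $l$.

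The remaining work is the case split. For $c=1$ the roots are $l^2=1$ (a M\"obius direction) and $l^2=\tfrac12+\tilde\alpha$, and the open interval contains no integer $l\geq 2$ if and only if $\tfrac12+\tilde\alpha\leq 4$, i.e. $\tilde\alpha\leq\tfrac72$. For $c>1$ the root $l^2=\tfrac{2}{c^2+1}<1$ is never an integer square, so the only constraint is $l^2=\tfrac{1}{c^2+1}+4\tilde\alpha\tfrac{c}{(c^2+1)^2}\leq 1$; taking $l=1$, $k=c\in\N_{>1}$ gives the threshold $\tilde\alpha=\tfrac14(c^3+c)$, which is increasing in $c$ and hence minimized over $\N_{>1}$ at $c=2$, giving $\tilde\alpha=\tfrac52$. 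Since $\tfrac52<\tfrac72$, the binding mode is $(k,l)=(2,1)$: for $\tilde\alpha\leq\tfrac52$ all $g_{\tilde\alpha,c}(l)\geq 0$, whereas for $\tilde\alpha>\tfrac52$ the mode $(2,1)$ gives a strictly negative direction. Thus $\alpha^1=4\pi^2\cdot\tfrac52=10\pi^2$; at this value equality in the displayed estimate — that is, condition \eqref{Pi2=} in the modes $(k,l)=(2,1)$ and $(1,2)$ — singles out exactly the non-M\"obius kernel elements $\Phi_1,\tilde\Phi_1,\Phi_2,\tilde\Phi_2$, so the maximum is attained. I expect the only delicate point to be the bookkeeping of which pairs $(k,l)$ are genuine M\"obius/translation directions to be discarded, together with the verification that $l=1$, $k=c$ is the worst case when $c>1$; once that is in place the value $10\pi^2$ follows at once.
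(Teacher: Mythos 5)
Your proposal is correct and reproduces the paper's own argument essentially step by step: reduce to single Fourier modes $A_{kl}$ via orthogonality, combine the sharp bound on $\delta^2\bigPi^1$ with the explicit formula for $\delta^2\mathcal W$ to get the polynomial $g_{\tilde\alpha,c}(l)$, locate its positive roots, and split into $c=1$ (threshold $\tilde\alpha\leq 7/2$ from $l=2$) versus $c>1$ (threshold $\tilde\alpha\leq(c^3+c)/4$, minimized at $c=2$), yielding $\tilde\alpha=5/2$ and hence $\alpha^1=10\pi^2$. The one point you flag as delicate — that $(k,l)=(2,1)$ really is the binding mode among all pairs with $k>l\geq 1$, not just those with $l=1$ — is also left implicit in the paper, but it is easily closed: writing $n=k^2+l^2$, the constraint is $\tilde\alpha\leq n(n-1)/(4kl)$, and $(k-l)^2\geq 1$ gives $kl\leq(n-1)/2$, so the right-hand side is $\geq n/2\geq 5/2$ with equality forcing $k-l=1$ and $n=5$, i.e.\ $(k,l)=(2,1)$.
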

The problem at $\;b= 1\;$ is that the kernel dimension of $\;\delta^2 \mathcal W_{\alpha^1}\big(f^1\big)\;$ is too high. Even using the invariance of the equation it is not possible to reduce it to $1$, which is needed for the bifurcation theory from simple eigenvalues. The main reason is that linear combinations of the two $\;\Phi_i\;$ cannot be reduced to a translation and scaling of $\;\Phi_1\;$ only. This situation is different for $\;b\neq 1,\;$ see Proposition \ref{1d}, because for homogenous tori \eqref{parametrizationofhomogenous} the immersion is not symmetric w.r.t. parameter directions $\;x\;$ and $\;y\;$. For $\;b \neq 1\;$ we have that $\;\beta^b \neq 0\;$ and thus the second variation of $\;\bigPi^2\;$ enters the calculation of  
$$\alpha^b= \text{ max }\big\{\;\alpha>0 \;| \; \delta^2\mathcal W_{\alpha, \beta^b} \geq0\;\big\}.$$

Moreover, $\;A_{k,l}\big( T^2_1\big)\;$ is canonically isomorphic to $\;A_{k,l}\big( T^2_b\big)\;$ via

\begin{equation}
\begin{split}
\sin\big(k \sqrt{2}x\big)\cos\big(l \sqrt{2}y\big) &\longmapsto \sin\big(\tfrac{k x}{r}\big)\cos\big(\tfrac{l y}{s}\big),\\
\sin\big(k \sqrt{2}x\big)\sin\big(l \sqrt{2}y\big) &\longmapsto \sin\big(\tfrac{k x}{r}\big)\sin\big(\tfrac{l y}{s}\big),\\
\cos\big(k \sqrt{2}x\big)\sin\big(l \sqrt{2}y\big) &\longmapsto \cos\big(\tfrac{k x}{r}\big)\sin\big(\tfrac{l y}{s}\big),\\
\cos\big(k \sqrt{2}x\big)\cos\big(l \sqrt{2}y\big) &\longmapsto \cos\big(\tfrac{k x}{r}\big)\cos\big(\tfrac{l y}{s}\big).
\end{split}
\end{equation}

To emphasis this isomorphism, we denote in the following normal variations at the Clifford torus by $\;\varphi^1 = \Phi^1 \vec{n}^1,\;$ with $\;\Phi^1\;$ a well defined function on $\;T^2_1,\;$  and the corresponding normal variations at homogenous tori $f^b$ under the above isomorphism by $\;\varphi^b = \Phi^b \vec{n}^b.\;$
Since $\;\delta^2 \mathcal W \geq0\;$ and $\;\delta^2\bigPi^1 \geq0\;$
we obtain the following Lemma using Lemma 4 and 7 of \cite{KuwertLorenz}.\\

\begin{Lem}\label{spectralproperty}
With the notations as above let $\;\alpha_0 \in \R_+\;$ be fixed and $\;\Phi^1 \in A_{k,l}(T^2_1)\;$ such that 

$$\delta^2\mathcal W_{\alpha_0}\big(f^1\big)\big( \varphi^1,  \varphi^1\big) >0.$$

Then for $\;b\sim1\;$ close enough we also have

$$\delta^2\mathcal W_{\alpha, \beta^b}\big(f^b\big)\big(  \varphi^b, \varphi^b\big)>0,$$

for all $\;\alpha \leq \alpha_0.\;$\\
\end{Lem}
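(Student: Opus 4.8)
The plan is to reduce the statement to a claim on a fixed finite-dimensional space and then combine continuity in $b$ with monotonicity in $\alpha$. First I would use the canonical isomorphism $A_{k,l}(T^2_1)\cong A_{k,l}(T^2_b)$ recorded above to transport $\varphi^1=\Phi^1\vec n^1$ to $\varphi^b=\Phi^b\vec n^b$, and after rescaling assume $<\varphi^1,\varphi^1>_{L^2}=1$. By the Remark following the computation of $D^2\bigPi^1(f^1)$, together with its general-$b$ analogue, the second variations of $\mathcal W$, $\bigPi^1$ and $\bigPi^2$ at $f^b$ pair distinct Fourier modes to zero, so all three restrict to $A_{k,l}(T^2_b)$, i.e. effectively to the fixed $4$-dimensional space $A_{k,l}(T^2_1)$. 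Hence, by linearity of the second variation, everything is encoded in the single real quantity
\[
Q(b,\alpha)\;:=\;\delta^2\mathcal W(f^b)(\varphi^b,\varphi^b)-\alpha\,\delta^2\bigPi^1(f^b)(\varphi^b,\varphi^b)-\beta^b\,\delta^2\bigPi^2(f^b)(\varphi^b,\varphi^b)\;=\;\delta^2\mathcal W_{\alpha,\beta^b}(f^b)(\varphi^b,\varphi^b),
\]
and it suffices to show that for $b$ close enough to $1$ we have $Q(b,\alpha)>0$ for all $\alpha\le\alpha_0$.

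Next I would establish continuity in $b$ of the three numbers $A(b):=\delta^2\mathcal W(f^b)(\varphi^b,\varphi^b)$, $B(b):=\delta^2\bigPi^1(f^b)(\varphi^b,\varphi^b)$ and $C(b):=\delta^2\bigPi^2(f^b)(\varphi^b,\varphi^b)$ near $b=1$. For this one transcribes Lemma 4 and Lemma 7 of \cite{KuwertLorenz}, which give $\delta^2\bigPi^1$, $\delta^2\bigPi^2$ and $\delta^2\mathcal W$ at the homogeneous torus $f^b$ by explicit formulas on each mode $A_{k,l}$; the identities \eqref{D^2Pi^1} and \eqref{D^2W} are their $b=1$ specializations, and they depend smoothly (indeed real-analytically) on $b$ near $1$. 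Since also $b\mapsto\beta^b$ is continuous with $\beta^1=0$, it follows that $Q(b,\alpha)=A(b)-\alpha B(b)-\beta^b C(b)$ is jointly continuous, affine in $\alpha$, with $Q(1,\alpha)=\delta^2\mathcal W_\alpha(f^1)(\varphi^1,\varphi^1)$; in particular $Q(1,\alpha_0)=\delta^2\mathcal W_{\alpha_0}(f^1)(\varphi^1,\varphi^1)>0$ by hypothesis.

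Finally I would invoke monotonicity in $\alpha$ to reduce to the endpoint $\alpha=\alpha_0$. With the chosen orientation of $f^b$ one has $\delta^2\bigPi^1(f^b)\ge 0$ on the relevant variations (this is exactly the input from Lemma 4 of \cite{KuwertLorenz} together with the sign normalization made in this section), so $B(b)\ge 0$ and $\alpha\mapsto Q(b,\alpha)$ is non-increasing; thus it is enough to prove $Q(b,\alpha_0)>0$. Since $Q(1,\alpha_0)>0$ and $Q$ is continuous, there is $\varepsilon=\varepsilon(k,l,\alpha_0)>0$ with $Q(b,\alpha_0)>0$ whenever $|b-1|<\varepsilon$; for such $b$ and every $\alpha\le\alpha_0$ we then get $Q(b,\alpha)\ge Q(b,\alpha_0)>0$, i.e. $\delta^2\mathcal W_{\alpha,\beta^b}(f^b)(\varphi^b,\varphi^b)>0$, which is the assertion. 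The main obstacle is the second step: one must make the $b$-dependence of $\delta^2\bigPi^1(f^b)$ and $\delta^2\bigPi^2(f^b)$ sufficiently explicit (via the general-$b$ versions of the computation leading to \eqref{D^2Pi^1}) to see both the joint continuity of $Q$ and the persistence of the mode decoupling for $b\neq 1$; the sign $\delta^2\bigPi^1(f^b)\ge 0$ underlying the monotonicity must likewise be read off from those formulas.
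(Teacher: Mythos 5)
Your argument is correct and is essentially a fleshed-out version of the paper's one-sentence justification preceding the Lemma: both reduce to the explicit mode-by-mode formulas of Lemmas 4 and 7 of Kuwert--Lorenz to get continuity in $b$ (with $\beta^1=0$), and both use $\delta^2\mathcal W\ge 0$ together with the sign normalization $\delta^2\bigPi^1\ge 0$ to pass from $\alpha=\alpha_0$ to all $\alpha\le\alpha_0$ by monotonicity in $\alpha$. The paper offers no more detail than this, so your write-up does not diverge from it in any substantive way.
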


Kuwert and Lorenz  \cite{KuwertLorenz} computed the second derivative of $\;\bigPi^2\;$ for $\;\varphi^b = \Phi^b_{k,l} \vec{n}^b \;$ to be 

\begin{equation}
\begin{split}
D^2 \bigPi^2\big(f^b\big)\big( \varphi^b,  \varphi^b\big) &=\tfrac{1}{4\pi^2 r^2} \int_{T^2_b}<\del^2_{11} \Phi^b- \del^2_{22} \varphi^b,  \varphi^b>dA\\
 &+ \tfrac{r^2-s^2}{4\pi^2 r^4s^2}\int_{T^2_b} | \varphi^b|^2dA \\
&- \tfrac{2(r^2-s^2) + c_r(k,l)}{4\pi^2 r^4s^2} \int_{T^2_b} | \varphi^b|^2dA,
\end{split}
\end{equation}

where $\; c_r(k,l) := \tfrac{k^2s^2- l^2r^2}{k^2s^2+l^2r^2} \;$ and $\; \varphi^b \in A_{k,l}\big(T^2_b\big) \vec{n}^b.$

For $\;b \sim 1,\;$ i.e., $\;r \sim \tfrac{1}{\sqrt{2}}\;$ this yields 

$$D^2\bigPi^2\big(f^b\big)\big( \varphi^b_1,  \varphi^b_1\big) >D^2 \bigPi^2\big(f^b\big)\big( \varphi^b_2,  \varphi^b_2\big),$$

for $\; \varphi_i^b\;$ are the images of $ \;\varphi_i^1 \in $ Ker $\delta^2\mathcal W_{\alpha^1}\big(f^1\big)\;$ under the canonical isomorphism and since for  $\;b>1,\;$ i.e.,  $\;r<s\;$ and $\;\beta^b >0\;$ we obtain

$$\delta^2 \mathcal W_{\alpha^1, \beta^b}\big(f^b\big)\big( \varphi^b_1,  \varphi^b_1\big) < \delta^2 \mathcal W_{\alpha^1, \beta^b}\big(f^b\big)\big( \varphi^b_2,  \varphi^b_2\big) < 0.$$

Thus $\;\alpha^b < \alpha^1\;$ and we obtain that for $\;b \sim 1\;$ and $\;b \neq 1\;$ the kernel of $\;\delta^2\mathcal W_{\alpha^b, \beta^b}\big(f^b\big)\;$ is $2$-dimensional and consists of either $\; \varphi^b_1\;$ and $\;\tilde  \varphi^b_1\;$ for $\;b>1\;$  or $ \;\varphi^b_2\;$ and $\; \varphi^b_2\;$ for $\;b<1.\;$ Both choices of $\;b\;$ lead to M\"obius invariant surfaces. We summarize the results in the following Lemma:\\

\begin{Lem}\label{stabilitykernel}
For $\;b \sim1,\;$ $\;b>1\;$ we have that $\;\alpha^b\;$ is uniquely determined by the kernel of 
$\;\delta^2 \mathcal W_{\alpha^b, \beta^b}\big(f^b\big)\;$ which is $2$ dimensional and spanned  (up to invariance) by the normal variations 

$$ \varphi^b_1=  \sin\big(\tfrac{x}{r} + \tfrac{2y}{s}\big) \vec{n}^b\quad \text{and} \quad \tilde \varphi^b_1 =  \cos\big(\tfrac{x}{r} + \tfrac{2y}{s}\big)\vec{n}^b.$$

\end{Lem}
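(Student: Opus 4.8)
The plan is to organise the facts already assembled in this appendix rather than to derive anything new. I would start from Lemma~\ref{kernvonalpha}: at $b=1$ one has $\alpha^1=10\pi^2$ and, modulo the invariance of $\mathcal{W}_\alpha$, the kernel of $\delta^2\mathcal{W}_{\alpha^1}(f^1)$ is the span of $\Phi_1,\tilde\Phi_1\in\mathcal{A}_{1,2}(T^2_1)\vec n^1$ and of $\Phi_2,\tilde\Phi_2\in\mathcal{A}_{2,1}(T^2_1)\vec n^1$, with $\Phi_1=\sin(\sqrt2(x+2y))$ and $\tilde\Phi_1=\cos(\sqrt2(x+2y))$. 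The first step is to localise the whole stability question for $b\sim1$ to these two $4$–dimensional blocks. On every Fourier block on which the polynomial analysis behind Lemma~\ref{kernvonalpha} yields $\delta^2\mathcal{W}_{\alpha^1}(f^1)>0$ strictly — i.e. all blocks except $\mathcal{A}_{1,2},\mathcal{A}_{2,1}$ and the three blocks $\mathcal{A}_{1,0},\mathcal{A}_{0,1},\mathcal{A}_{1,1}$ on which $\delta^2\mathcal{W}(f^1)$ itself already vanishes — strict positivity survives up to some $\alpha_0>\alpha^1$, so by Lemma~\ref{spectralproperty} one has $\delta^2\mathcal{W}_{\alpha,\beta^b}(f^b)>0$ there for every $b\sim1$ and $\alpha\le\alpha_0$; on the three degenerate blocks non-negativity of $\delta^2\mathcal{W}_{\alpha,\beta^b}(f^b)$ for $b\sim1$ is exactly the index-$0$/nullity statement for homogeneous tori from \cite{KuwertLorenz}, where one uses that $\bigPi^1,\bigPi^2$ are M\"obius invariant so that $\delta^2\bigPi^1(f^b)$ and $\delta^2\bigPi^2(f^b)$ vanish on the M\"obius directions contained in those blocks. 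Thus for $b\sim1$ and $\alpha\le\alpha_0$ the sign of $\delta^2\mathcal{W}_{\alpha,\beta^b}(f^b)$ is decided on $\mathcal{A}_{1,2}(T^2_b)\vec n^b$ and $\mathcal{A}_{2,1}(T^2_b)\vec n^b$ alone, the M\"obius directions contributing an identically zero piece.

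Next I would diagonalise the quadratic form on these two blocks. The Kuwert--Lorenz formula for $D^2\bigPi^2(f^b)$ recalled above, together with \eqref{D^2W}, shows that $\delta^2\mathcal{W}(f^b)$ and $\delta^2\bigPi^2(f^b)$ act as scalar multiples of the $L^2$–identity on each block $\mathcal{A}_{k,l}(T^2_b)\vec n^b$, whereas by \eqref{D^2Pi^1} the form $\delta^2\bigPi^1(f^b)$ has on such a block eigenvalue $\pm\tfrac1{\pi^2}\big(2kl-\tfrac{4kl}{k^2+l^2}\big)$, each of multiplicity two, the positive eigenspace being the two-dimensional subspace cut out by the equality condition \eqref{Pi2=}. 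Hence $\delta^2\mathcal{W}_{\alpha,\beta^b}(f^b)$ restricted to $\mathcal{A}_{k,l}(T^2_b)\vec n^b$ with $\{k,l\}=\{1,2\}$ is diagonal, its lowest eigenvalue has multiplicity two and vanishes at
$$\alpha^b_{(k,l)}:=\frac{\delta^2\mathcal{W}(f^b)|_{\mathcal{A}_{k,l}}-\beta^b\,\delta^2\bigPi^2(f^b)|_{\mathcal{A}_{k,l}}}{\tfrac1{\pi^2}\big(2kl-\tfrac{4kl}{k^2+l^2}\big)},$$
with kernel the eigenspace \eqref{Pi2=}, that is $\operatorname{span}\{\varphi^b_1,\tilde\varphi^b_1\}$ for $(k,l)=(1,2)$ and $\operatorname{span}\{\varphi^b_2,\tilde\varphi^b_2\}$ for $(k,l)=(2,1)$. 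Consequently $\alpha^b=\min\{\alpha^b_{(1,2)},\alpha^b_{(2,1)}\}$, and, since $\delta^2\bigPi^1(f^b)$ is strictly positive on $\varphi^b_1$, the number $\alpha^b$ is characterised uniquely by the vanishing of $\delta^2\mathcal{W}_{\alpha^b,\beta^b}(f^b)(\varphi^b_1,\varphi^b_1)$.

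The only place where $b\neq1$ is used is the comparison of $\alpha^b_{(1,2)}$ and $\alpha^b_{(2,1)}$. Since $\delta^2\mathcal{W}(f^b)$ and $\delta^2\bigPi^1(f^b)$ are symmetric under interchanging the roles of $k$ and $l$, they take the same values on $\mathcal{A}_{1,2}$ and $\mathcal{A}_{2,1}$, so the difference $\alpha^b_{(1,2)}-\alpha^b_{(2,1)}$ is controlled entirely by $\delta^2\bigPi^2(f^b)$; the inequality $D^2\bigPi^2(f^b)(\varphi^b_1,\varphi^b_1)>D^2\bigPi^2(f^b)(\varphi^b_2,\varphi^b_2)$ valid for $r\sim\tfrac1{\sqrt2}$, together with $\beta^b>0$ for $b>1$, then gives $\alpha^b_{(1,2)}<\alpha^b_{(2,1)}$, hence $\alpha^b=\alpha^b_{(1,2)}<\alpha^1$. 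At $\alpha=\alpha^b$ the form $\delta^2\mathcal{W}_{\alpha^b,\beta^b}(f^b)$ is therefore non-negative throughout, strictly positive on $\mathcal{A}_{2,1}(T^2_b)\vec n^b$ and on all blocks outside $\{(1,2),(2,1)\}$, zero on the M\"obius directions, and zero on $\mathcal{A}_{1,2}(T^2_b)\vec n^b$ precisely on $\operatorname{span}\{\varphi^b_1,\tilde\varphi^b_1\}$; so modulo invariance the kernel is two-dimensional. Transporting $\Phi_1,\tilde\Phi_1$ through the canonical isomorphism $\mathcal{A}_{1,2}(T^2_1)\to\mathcal{A}_{1,2}(T^2_b)$ recorded above finally yields $\varphi^b_1=\sin\big(\tfrac xr+\tfrac{2y}s\big)\vec n^b$ and $\tilde\varphi^b_1=\cos\big(\tfrac xr+\tfrac{2y}s\big)\vec n^b$.

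I expect the delicate part to be the first step, and within it the three degenerate blocks $\mathcal{A}_{1,0},\mathcal{A}_{0,1},\mathcal{A}_{1,1}$: there $\delta^2\mathcal{W}(f^1)$ vanishes identically, so the continuity argument of Lemma~\ref{spectralproperty} is unavailable and one must invoke the precise index-and-nullity computation for homogeneous tori in \cite{KuwertLorenz}, checking that the small perturbations $\alpha\,\delta^2\bigPi^1(f^b)$ and $\beta^b\,\delta^2\bigPi^2(f^b)$ do not destroy non-negativity on these blocks for $b$ near $1$. Everything else reduces to linear algebra on the two $4$–dimensional blocks with the quadratic forms already in hand.
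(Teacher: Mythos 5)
Your proof is correct and essentially the same as the paper's: both start from Lemma~\ref{kernvonalpha}, use Lemma~\ref{spectralproperty} to reduce the question for $b\sim 1$ to the two blocks $\mathcal{A}_{1,2}$ and $\mathcal{A}_{2,1}$, and then use the strict inequality $D^2\bigPi^2(f^b)(\varphi^b_1,\varphi^b_1) > D^2\bigPi^2(f^b)(\varphi^b_2,\varphi^b_2)$ together with $\beta^b>0$ for $b>1$ to conclude that $\alpha^b$ is attained on the $\mathcal{A}_{1,2}$ block with two-dimensional kernel spanned (modulo invariance) by $\varphi^b_1,\tilde\varphi^b_1$. You are more careful than the paper about the three M\"obius blocks $\mathcal{A}_{1,0},\mathcal{A}_{0,1},\mathcal{A}_{1,1}$ where Lemma~\ref{spectralproperty} does not apply directly, which is a useful clarification; the only slight imprecision is that in your formula for $\alpha^b_{(k,l)}$ you reuse the $b=1$ eigenvalue of $\delta^2\bigPi^1$ from \eqref{D^2Pi^1}, whereas the coefficient is $b$-dependent, but the structural conclusion — a two-dimensional positive eigenspace cut out by condition \eqref{Pi2=} — persists for $b$ near $1$ and the argument goes through.
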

Now, for $\;b\sim 1\;$ consider the reparametrization of the homogenous torus as a $(2,-1)$-equivariant surface

\begin{equation*}
\begin{split}
\tilde f^b: \C /\big(2\pi i\Z + 2\pi \tfrac{sr + 2ir^2}{4r^2 + s^2} \Z\big) & \longrightarrow S^3\subset \C^2, \\
 (\tilde x, \tilde y)&\longmapsto \begin{pmatrix}r e^{i2\tilde y +  \tfrac{i s \tilde x}{r}}, s e^{-i\tilde y+  \tfrac{   i  2r \tilde x}{s}}\end{pmatrix}.
 \end{split}
 \end{equation*}

Using these new coordinates the kernel of $\;\delta^2 \mathcal W_{\alpha^b, \beta^b}\big(f^b\big)\;$ for $\;b = \tfrac{s}{r}>1\;$ is given by

$$\Phi^b= \sin \big((\tfrac{s}{r}+ 4 \tfrac{r}{s})\tilde x\big), \quad \tilde \Phi^b = \cos\big((\tfrac{s}{r}+ 4 \tfrac{r}{s})\tilde x\big).$$

Thus infinitesimally the $\;\tilde y$-direction of the surface is not affected by a deformation with normal variation $\;\Phi^b\vec{n}^b,\;$ i.e., the $\;(2, -1)$-equivariance is infinitesimally preserved.

This is the most natural coordinate change with respect to these stability computations and we have

$$\ \begin{pmatrix} x \\ y \end{pmatrix}  \longmapsto \begin{pmatrix} s& 2r\\2r& -s\end{pmatrix}  \begin{pmatrix} \tilde x \\ \tilde y\end{pmatrix} $$

Since the space of $\;(2,-1)$-equivariant surfaces and $\;(1,2)$-equivariant surfaces are isomorphic and differ only by the orientation of the surface and an isometry of $\;S^3\;$, we consider $\;(1,2)$-equivariant surfaces for convenience. 
Moreover, it is important to note that for all real numbers $\;c_1,\;$ $c_2\;$ there exist $\;d_1, d_2 \in \R\;$ such that

\begin{equation}\label{additionstheoreme}
\begin{split}
c_1 \Phi_1^b + c_2 \tilde \Phi_1^b &= c_1 \sin\big((\tfrac{s}{r}+ 4 \tfrac{r}{s})\tilde x\big) + c_2 \cos\big((\tfrac{s}{r}+ 4 \tfrac{r}{s})\tilde x\big) \\
&= d_1 \sin\big((\tfrac{s}{r}+ 4 \tfrac{r}{s})\tilde x+ d_2\big) =  d_1\Phi_1^b\big((\tfrac{s}{r}+ 4 \tfrac{r}{s})\tilde x + d_2\big).
\end{split}
\end{equation}

\end{document}